\numberwithin{equation}{section}
\newcommand{\RR}{\mathbb{R}}
\newcommand{\ZZ}{\mathbb{Z}}
\newcommand{\eps}{\varepsilon}
\newcommand{\tubes}{\mathbb{T}}
\newcommand{\lines}{\mathcal L}
\newcommand{\norm}[1]{\left\Vert#1\right\Vert}
\newcommand{\itemizeEqnVSpacing}{\rule{0pt}{1pt}\vspace*{-12pt}}
\newtheorem{thm}{Theorem}[section]
\newtheorem{lem}[thm]{Lemma}
\newtheorem{prop}[thm]{Proposition}
\newtheorem{cor}[thm]{Corollary}
\newtheorem{conj}[thm]{Conjecture}
\theoremstyle{remark}
\newtheorem{defn}[thm]{Definition}
\newtheorem{rem}[thm]{Remark}
\newtheorem*{kLinearKakeyaThmRestate}{Theorem \ref{kLinearKakeyaThm}$^\prime$}
\newcommand{\myitem}[1]{%
\item[#1.]\protected@edef\@currentlabel{#1}%
}
\providecommand{\keywords}[1]{\textbf{\textit{Keywords$\phantom{--}$ }} #1}
\begin{document}
\pagenumbering{arabic}
\title{New Kakeya estimates using Gromov's algebraic lemma}

\author{Joshua Zahl\thanks{University of British Columbia, Vancouver BC, supported by an NSERC Discovery grant, jzahl@math.ubc.ca.
}}

\maketitle

\begin{abstract}
\noindent This paper presents several new results related to the Kakeya problem. First, we establish a geometric inequality which says that collections of direction-separated tubes (thin neighborhoods of line segments that point in different directions) cannot cluster inside thin neighborhoods of low degree algebraic varieties. We use this geometric inequality to obtain a new family of multilinear Kakeya estimates for direction-separated tubes. Using the linear / multilinear theory of Bourgain and Guth, these multilinear Kakeya estimates are converted into Kakeya maximal function estimates. Specifically, we obtain a Kakeya maximal function estimate in $\RR^n$ at dimension $d(n) = (2-\sqrt{2})n + c(n)$ for some $c(n)>0$. Our bounds are new in all dimensions except $n=2,3,4,$ and $6$.
\end{abstract}

\keywords{Besicovitch set, Kakeya problem, real algebraic geometry}

\section{Introduction}
A set $T\subset\RR^n$ is called a $\lambda\times\delta$ tube if it is a translated and rotated copy of the set 
$$
\big\{x\in\RR^n\colon \sqrt{x_1^2+\ldots+x_{n-1}^2}<\delta,\ 0< x_n<\lambda\big\}.
$$ 
Every $\lambda\times\delta$ tube has a unique coaxial line, and we say that the angle between two tubes is the angle between their coaxial lines. We say a set of $\lambda\times\delta$ tubes is direction-separated if the angle between each pair of tubes is at least $\delta$. In this paper we will be interested in the Kakeya maximal function conjecture, which is a quantitative bound on the overlap between direction-separated tubes.

\begin{conj}[Kakeya maximal function conjecture]\label{kakeyaMaximalFunction}
Let $1\leq d\leq n$ and let $\eps>0$. Then there is a constant $C(n,d,\eps)$ so that whenever $\tubes$ is a set of direction-separated $1\times\delta$ tubes in $\RR^n$, we have
\begin{equation}\label{kakeyaMaximalFunctionConjecture}
\Big\Vert \sum_{T\in\tubes}\chi_T\Big\Vert_{\frac{d}{d-1}}\leq C(n,d,\eps)\Big(\frac{1}{\delta}\Big)^{\frac{n}{d}-1+\eps}\Big(\sum_{T\in\tubes}|T|\Big)^{\frac{d-1}{d}}.
\end{equation} 
\end{conj}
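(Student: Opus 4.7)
The plan is to attack the conjecture via the multilinear-to-linear reduction of Bourgain--Guth, bootstrapped by the paper's algebraic clustering inequality for direction-separated tubes. I would first reduce to the endpoint $d=n$: the bound \eqref{kakeyaMaximalFunctionConjecture} at $d=n$ is the strongest case, and every $1\leq d<n$ follows by interpolating it against the trivial identity $\norm{\sum_{T\in\tubes}\chi_T}_1 = \sum_{T\in\tubes}|T|$ on $L^1$, since the endpoints $(1,0)$ and $(n/(n-1),\ 1/n)$ in $(1/p,\ n/d-1)$ coordinates are colinear with all intermediate targets.

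Next I would apply the Bourgain--Guth decomposition. After pigeonholing on the angular distribution of tubes through a small ball, one is in either a broad case where there are $k$ tubes pointing in $k$ transverse directions, or a narrow case where all tubes concentrate near a $(k-1)$-dimensional subvariety of the direction sphere. In the broad case I would invoke a sharp $k$-linear Kakeya inequality: the $k=n$ case is the Bennett--Carbery--Tao theorem, and for $k<n$ I would use the paper's clustering inequality together with polynomial partitioning, decomposing $\RR^n$ into cells in which the relevant tube intersections are controlled, and iterating the cellular decomposition. The narrow case feeds into a smaller copy of the linear estimate at a reduced scale, producing an induction on scales.

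The hard part is closing this induction at the sharp exponent $n/d-1$ all the way down to $d=n$. The Bourgain--Guth reduction incurs a $\delta^{-\eta}$ loss at each level, and the available $k$-linear inequalities for $k<n$ remain quantitatively weaker than the $k=n$ Bennett--Carbery--Tao bound, so the geometric series of losses does not sum to $\delta^{-\eps}$. To reach the full conjectural exponent I would need either a strengthened clustering inequality that removes the degree restriction on the enveloping variety, or an orthogonal analytic input — for instance a refined decoupling or a sharp square-function estimate — that absorbs the Bourgain--Guth loss at each narrow-case step. With the present tools the induction closes only at $d(n) = (2-\sqrt{2})n + c(n)$, consistent with the abstract; pushing the argument all the way to the conjectured range $d=n$ is precisely the open part of the plan, and I expect any honest attempt to stall there.
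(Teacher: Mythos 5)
You were asked to prove Conjecture \ref{kakeyaMaximalFunction}, which is an open problem; the paper does not prove it and does not claim to. What the paper actually establishes is the partial result Theorem \ref{KakeyaMaximalBoundThm}, a Kakeya maximal estimate only at dimension $d(n)=\max_k\min\big(n-k+2,\ \tfrac{n^2+k^2+n-k}{2n}\big)\geq(2-\sqrt2)n$, obtained by combining the clustering inequality for direction-separated tubes near nested low-degree varieties (Theorem \ref{polyWolffFlagsVarities}), the multilevel grains / polynomial partitioning decomposition giving the $k$-linear estimate (Theorem \ref{kLinearKakeyaThm}), and the Bourgain--Guth multilinear-to-linear reduction (Lemma \ref{multiLinToLin}). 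Your outline sketches essentially this machinery, and you are candid that the induction on scales closes only at $d(n)$ and stalls before $d=n$. So judged as a proof of the stated conjecture, there is a genuine and (with present tools) unavoidable gap: neither your plan nor the paper reaches \eqref{kakeyaMaximalFunctionConjecture} for $d$ near $n$; quantitatively, the cap comes from playing the restriction $d\leq n-k+2$ in Lemma \ref{multiLinToLin} against the value $d=\tfrac{n^2+k^2+n-k}{2n}$ in Theorem \ref{kLinearKakeyaThm}. Your closing sentence correctly identifies this as the open part, which is the honest assessment, but it means the proposal is a plan for the paper's Theorem \ref{KakeyaMaximalBoundThm}, not a proof of Conjecture \ref{kakeyaMaximalFunction}.

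One concrete error in the portion you did write out: the reduction of all $1\leq d<n$ to the endpoint $d=n$ by interpolating against the trivial identity $\Vert\sum_{T\in\tubes}\chi_T\Vert_1=\sum_{T\in\tubes}|T|$ goes in the wrong direction. For $d<n$ the exponent $d/(d-1)$ exceeds $n/(n-1)$, so the intermediate targets do not lie between $L^1$ and $L^{n/(n-1)}$; interpolation between those two spaces only produces exponents in $[1,n/(n-1)]$. The correct trivial anchor is the $L^\infty$ bound $\Vert\sum_{T\in\tubes}\chi_T\Vert_\infty\leq\#\tubes\lesssim\delta^{1-n}$: writing $\int f^{p'}\leq\Vert f\Vert_\infty^{p'-p}\int f^{p}$ with $p=n/(n-1)$, $p'=d/(d-1)$ and using $\sum_T|T|\lesssim 1$ does recover the estimates at every $d<n$, since in your coordinates the points $(1-1/d,\,n/d-1)$ all lie on the line through $(0,\,n-1)$ and $(1-1/n,\,0)$. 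Your cited endpoint $(n/(n-1),\,1/n)$ is also not a valid point in those coordinates (at $d=n$ one has $1/p=(n-1)/n$ and $\delta$-exponent $0$). The fix is easy, and of course the reduction is moot here since the $d=n$ endpoint is exactly what cannot be proved, but it should be corrected if you keep that step.
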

If Conjecture \ref{kakeyaMaximalFunction} is true for a particular value of $d$ and $n$, this is called a Kakeya maximal function estimate in $\RR^n$ at dimension $d$. A Kakeya maximal function estimate in $\RR^n$ at dimension $d$ implies that every Besicovitch set in $\RR^n$ has Hausdorff dimension at least $d$. Further background on the Kakeya conjecture can be found in the survey articles \cite{KT,W2}.

Conjecture \ref{kakeyaMaximalFunction} was solved in dimension 2 by C\'ordoba \cite{Cor}, and remains open in dimension three and higher. In 2005, Bennett, Carbery, and Tao \cite{BCT} considered the following multilinear variant of the Kakeya maximal function conjecture.
\begin{thm}[Multilinear Kakeya theorem]\label{multilinearKakeyaThm}
Let $2\leq k\leq n$. Then there is a constant $C(n)$ so that whenever $\tubes_1,\ldots,\tubes_k$ are sets of $1\times\delta$ tubes in $\RR^n$, we have
\begin{equation}
\Big\Vert\Big(\sum_{T_1\in\tubes_1}\ldots\sum_{T_k\in\tubes_k} \chi_{T_1}\cdots\chi_{T_k}|v_1\wedge\ldots\wedge v_k|\Big)^{\frac{1}{k}}\Big\Vert_{\frac{k}{k-1}}
\leq C(n) \Big(\frac{1}{\delta}\Big)^{\frac{n}{k}-1}\prod_{i=1}^k\Big(\sum_{T_i\in\tubes_i}|T_i|\Big)^{\frac{1}{k}},
\end{equation}
where in the above expression $v_i=v(T_i)$ is the direction of the tube $T_i$. 
\end{thm}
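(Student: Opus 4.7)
The plan is to follow the induction-on-scales strategy pioneered by Guth, which recovers the theorem by iterating the axis-aligned Loomis--Whitney inequality at successively finer scales. The first step is a reduction: the transversality weight $|v_1\wedge\ldots\wedge v_k|$ allows me to cover the space of direction $k$-tuples by $O_n(1)$ pieces on each of which the directions of $\tubes_i$ lie within a small angle of a fixed unit vector $e_i$, with $\{e_1,\ldots,e_k\}$ uniformly transverse. On each piece the weight $|v_1\wedge\ldots\wedge v_k|$ is comparable to a constant, so after pigeonholing we may assume every tube in $\tubes_i$ is essentially axis-aligned along $e_i$. This costs only a multiplicative constant depending on $n$.

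Next I set up the induction. Let $A(\delta)$ denote the smallest constant for which the reduced, near-axis-aligned inequality holds at scale $\delta$; the goal is $A(\delta)\lesssim_n 1$ uniformly. For the base case $\delta\geq\tfrac12$ the inequality is trivial. For the inductive step, I fix an intermediate scale $\rho$ with $\delta\ll\rho\ll 1$ and partition $[0,1]^n$ into cubes $Q$ of side $\rho$. Inside each $Q$ the portion of any tube $T\in\tubes_i$ passing through is, after rescaling by $\rho^{-1}$, a $1\times(\delta/\rho)$ tube whose direction still lies near $e_i$. Applying the inductive hypothesis with constant $A(\delta/\rho)$ to each rescaled cube bounds the $L^{k/(k-1)}$ norm of the product on $Q$.

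To pass from the per-cube bounds to a global one, I combine with a Loomis--Whitney step. The near-axis-aligned hypothesis implies that the function $f_i(x):=\sum_{T\in\tubes_i}\chi_T(x)$ is essentially constant along $e_i$-fibers at scale $\rho$, so the $\rho$-cube averages of $f_i$ depend on only the $n-1$ coordinates transverse to $e_i$. Since $\{e_1,\ldots,e_k\}$ are transverse, applying the classical Loomis--Whitney inequality on the $k$ projections yields the correct combinatorial bound on how the cube-local estimates assemble. Carrying out the computation gives a recursion of the form $A(\delta)\leq C_n\,A(\delta/\rho)^{\alpha}$ with $\alpha<1$, which iterates in $O(\log\log(1/\delta))$ steps to the uniform bound $A(\delta)\leq C(n)$.

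The main technical obstacle, and the step I expect to be the most delicate, is the Loomis--Whitney assembly: one needs the exponent $\alpha<1$ to be strictly less than one (rather than equal to one) so that the recursion converges, and this requires that the per-cube estimate recover not just the correct shape but a genuine gain over trivial counting. This gain comes from the fact that the number of tubes in $\tubes_i$ meeting a given $\rho$-cube is bounded by the total count times a factor $\rho^{n-1}$ (using direction separation and the axis-aligned structure), and matching this factor cleanly against the $\delta^{-(n/k-1)}$ scaling on the right-hand side is the heart of the argument.
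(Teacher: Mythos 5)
The paper does not prove Theorem~\ref{multilinearKakeyaThm}; it cites Bennett--Carbery--Tao for the version with $\delta^{-\eps}$ losses and Guth (via the polynomial ham sandwich theorem and a visibility functional) for the endpoint stated here. Your induction-on-scales plan is in the spirit of Guth's later ``short proof,'' a genuinely different route, but that route only yields the theorem up to $\delta^{-\eps}$ losses, and two of your key claims break down.

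First, the recursion produced by a $\rho$-cube decomposition plus a Loomis--Whitney assembly has the form $A(\delta)\le C_n\,A(\delta/\rho)$, not $A(\delta)\le C_n\,A(\delta/\rho)^\alpha$ with $\alpha<1$: the inequality is homogeneous of degree one in each factor $\sum_{T_i}|T_i|$, so the best constant enters linearly and there is no mechanism to produce a sublinear exponent. Iterating with $\rho=\sqrt\delta$ over $O(\log\log(1/\delta))$ steps then gives $A(\delta)\le C_n^{O(\log\log(1/\delta))}=(\log(1/\delta))^{O(1)}$, a polylogarithmic loss rather than a uniform constant; the endpoint genuinely requires a different mechanism, which is why Guth's endpoint proof is a polynomial-method argument. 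Second, and more seriously, your final paragraph invokes direction separation to bound the number of tubes of $\tubes_i$ through a $\rho$-cube by $\rho^{n-1}\#\tubes_i$. Theorem~\ref{multilinearKakeyaThm} has \emph{no} direction-separation hypothesis --- the paper stresses this in the sentence immediately following the statement, precisely because exploiting direction separation to improve on Theorem~\ref{multilinearKakeyaThm} is the content of Theorem~\ref{kLinearKakeyaThm}; assuming it here would be circular. Moreover, even under direction separation the claimed cube count is false: a bush of $\sim\delta^{1-n}$ direction-separated tubes through a single point all pass through one $\rho$-cube for any $\rho\ge\delta$, far exceeding $\rho^{n-1}\delta^{1-n}$ when $\rho$ is small. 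Without that count the proposed ``gain'' $\alpha<1$ disappears, and with it the claimed endpoint.
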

Theorem \ref{multilinearKakeyaThm} was proved up to endpoint by Bennett, Carbery, and Tao in \cite{BCT} and the endpoint estimate was established by Guth in \cite{G}. The version stated here can be found in \cite{CV}. Theorem \ref{multilinearKakeyaThm} can be combined with an induction-on-scales argument to prove bounds on the Kakeya maximal function. This is discussed further in Lemma \ref{multiLinToLin} below. Unfortunately, this strategy is not particularly effective---the resulting estimates are worse than those coming from other methods, such as Wolff's hairbrush argument \cite{W1}. What's more, Theorem \ref{multilinearKakeyaThm} is sharp, so this suggests that the strategy of combining induction-on-scales with $k$-linear Kakeya estimates will not lead to new bounds on the Kakeya maximal function.

However, the Kakeya maximal function conjecture is a statement about direction-separated tubes, while Theorem \ref{multilinearKakeyaThm} does not impose this restriction. Thus while Theorem \ref{multilinearKakeyaThm} is sharp, stronger estimates are possible if we impose the additional requirement that the tubes are direction-separated. 
\begin{thm}[Direction-separated multilinear Kakeya]\label{kLinearKakeyaThm}
Let $2\leq k\leq n$ and let $\eps>0$. Then there is a constant $C(n,\eps)$ so that whenever $\tubes$ is a set of direction-separated $1\times\delta$ tubes in $\RR^n$, we have
\begin{equation}\label{kLinRd}
\Big\Vert\Big(\sum_{T_1,\ldots,T_k\in\tubes} \chi_{T_1}\cdots\chi_{T_k}|v_1\wedge\ldots\wedge v_k|^{\frac{k}{d}}\Big)^{\frac{1}{k}} \Big\Vert_{\frac{d}{d-1}} \leq C(n,\eps) \Big(\frac{1}{\delta}\Big)^{\frac{n}{d}-1+\eps}\Big(\sum_{T\in\tubes}|T|\Big)^{\frac{n(d-1)}{(n-1)d}},
\end{equation}
where
\begin{equation}\label{valueOfDmultiLinKakeya}
d = \frac{n^2+k^2+n-k}{2n}.
\end{equation}
\end{thm}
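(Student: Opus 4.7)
The plan is to prove Theorem~\ref{kLinearKakeyaThm} by combining polynomial partitioning with the geometric inequality promised in the abstract, which forbids direction-separated tubes from clustering inside the thin neighborhood of a low-degree algebraic variety. I would first discretize by a standard dyadic pigeonholing: fix dyadic values of the wedge product $|v_1\wedge\cdots\wedge v_k|$ and of the multiplicity $\sum_{T\in\tubes}\chi_T(x)$, at the cost of a $|\log\delta|^{O(1)}$ factor. It then suffices to prove a weak-type bound on the super-level sets of the integrand in \eqref{kLinRd}, which reduces the problem to counting $k$-tuples of direction-separated tubes with a common intersection point and controlled wedge product.

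Next, I would set up a double induction on $|\tubes|$ and on $\delta$ and apply polynomial partitioning at degree $D=D(\eps)$: pick $P$ of degree $\leq D$ so that $\RR^n\setminus Z(P)$ splits into $\sim D^n$ cells $\Omega_j$ that equidistribute the discretized super-level set. Since each tube meets at most $D+1$ cells, the direction-separated subfamily $\tubes_j$ of tubes entering $\Omega_j$ satisfies $\sum_j |\tubes_j|\leq (D+1)|\tubes|$. In the \emph{cellular} case, applying the induction hypothesis in each cell and summing closes the induction provided $D$ is chosen sufficiently large in terms of $\eps$; the value of the exponent $d$ in \eqref{valueOfDmultiLinKakeya} should be forced by the requirement that the exponents in \eqref{kLinRd} balance at this cellular step.

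The main content is the \emph{algebraic} case, in which (after further pigeonholing) most of the super-level set lies inside the $\delta$-neighborhood of $Z(P)$ and most contributing tubes are essentially trapped there. Here I would invoke the new geometric inequality to bound the number of direction-separated $1\times\delta$ tubes that can lie in such a neighborhood, beyond what a trivial volume estimate gives. Combined with the standard multilinear Kakeya inequality (Theorem~\ref{multilinearKakeyaThm}) applied locally and summed over the smooth components of $Z(P)$ provided by Gromov's algebraic lemma, this should yield a bound compatible with what the cellular step demands.

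The principal obstacle is the algebraic case: one must extract enough from the direction-separation hypothesis to beat what a naive application of Theorem~\ref{multilinearKakeyaThm} on $\operatorname{nbd}_\delta(Z(P))$ would produce, and Theorem~\ref{multilinearKakeyaThm} is sharp in the absence of direction separation. I expect the argument to iterate: inside $Z(P)$ the direction-separated tubes are themselves constrained to lie along smooth pieces (of dimension at most $n-1$) supplied by Gromov's algebraic lemma, and one would recurse on these lower-dimensional pieces, each step providing a fractional gain governed by the geometric clustering inequality. Tracking the degree blow-up through this recursion while preserving the precise balance that produces $d=(n^2+k^2+n-k)/(2n)$ in \eqref{valueOfDmultiLinKakeya}, and matching the endpoint $k=n$ against the sharp Theorem~\ref{multilinearKakeyaThm}, will be the most delicate technical point.
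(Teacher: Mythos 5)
Your high-level ingredient list is right---discretization, polynomial partitioning, the direction-separation clustering inequality, and the sharp multilinear Kakeya theorem all appear in the paper's proof---but the architecture you describe differs from the paper's in a way that matters, and several of your steps are asserted without being checked.

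The paper does \emph{not} run a cellular/algebraic dichotomy as an outer induction on $\#\tubes$ and $\delta$ with the cellular step closing by the inductive hypothesis. Instead, after the discretization (which matches your first step), the entire argument is a one-shot computation: Proposition \ref{multiLevelGrains} iterates the cellular/algebraic dichotomy (Lemma \ref{singleGrainDecompFirstStep} via Corollary \ref{decompositionCubes}, then the refinement Lemma \ref{singleGrainDecompSecondStep}) internally until it produces a tree of grains of depth exactly $n-k$, each grain being a thin neighborhood of a transverse complete intersection of codimension equal to its level. The multilinear Kakeya theorem (Theorem \ref{multilinearKakeyaThm}) is applied only once, at the deepest level where the remaining variety has dimension $k$. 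The numbers $D_1,\dots,D_{n-k}$ that parametrize the tree are then controlled from below by the nested-variety polynomial Wolff axiom (Theorem \ref{polyWolffFlagsVarities}), and plugging that bound into the multilinear Kakeya output produces \eqref{kLinRd}. Nowhere is an induction on cardinality or scale invoked. Your assertion that ``applying the induction hypothesis in each cell and summing closes the induction provided $D$ is chosen sufficiently large'' is the kind of claim one would want verified: with the exponent $n(d-1)/((n-1)d)$ on the right of \eqref{kLinRd} (which is strictly smaller than $(d-1)/d$), it is not obvious that the standard $D^{n-1}$-to-$D^n$ bookkeeping closes, and the paper does not attempt it.

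A second, smaller misattribution: Gromov's algebraic lemma does not supply smooth pieces of $Z(P)$ on which to ``recurse.'' It appears only inside the proof of Lemma \ref{extSemiAlgSetsLem} (the tube-extension estimate for semi-algebraic sets), which is the engine behind Theorem \ref{polyWolffFlagsVarities}. The recursion into lower-dimensional pieces that you anticipate is real, but it is carried out by polynomial partitioning producing transverse complete intersections (Proposition \ref{algCellularDichotomyProp}), not by Gromov's lemma. Also, the genuinely new structural ingredient --- that the clustering inequality must be stated for a \emph{nested sequence} $Z_1\supset\cdots\supset Z_d$ of varieties at a \emph{nested sequence of scales} $r_1\geq\cdots\geq r_d$, rather than a single variety at a single scale --- does not appear in your proposal. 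That flag structure is what allows the multilevel grains decomposition to extract a gain at every level, and without it the bound degenerates to what was already obtainable from the $k=n-1$ case treated in \cite{GZ}. Your final paragraph gestures at a recursion producing ``a fractional gain'' at each step, which is the right instinct, but the precise object that realizes it is \eqref{boundNumberDirectionSeparatedTubesVarietyVersion}, and identifying that is where the bulk of the work lies.
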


Note that when $k<n$, the value of $d$ from \eqref{valueOfDmultiLinKakeya} is larger than $k$. Theorem \ref{kLinearKakeyaThm} generalizes a previous result of Guth and the author \cite{GZ}, which proved\footnote{The chronology is actually a bit more complicated; \cite{GZ} proved a result that was conditional on a certain conjecture about tubes. The author \cite{Z}, and independently, Katz and Rogers \cite{KR} later provided this conjecture.} Theorem \ref{kLinearKakeyaThm} in the special case $d=3,n=4$. The techniques in \cite{GZ} naturally extend to the case $k=n-1$, and they can also be used to prove weaker variants of Theorem \ref{kLinRd} for general $n$ and $k$ (Hickman and Rogers \cite{HR2} employed a similar strategy to prove certain $k$-broad estimates in $\RR^n$). However, several additional ideas are needed when $k<n-1$. Chief among these is a new hierarchical ``grains'' decomposition, which is constructed in Section \ref{multiLevelGrainsSec}, and a new geometric inequality that controls how tubes can interact with this grains decomposition; this geometric inequality will be discussed further in Section \ref{insideThickenedVarieties}. We will prove Theorem \ref{kLinearKakeyaThm} in Section \ref{multilinearKakeyaSec}.

When $k=n-1$ then $d = n-1+1/n$, and Theorem \ref{kLinearKakeyaThm} is sharp. This can be seen by taking $\tubes$ to be a set of $\delta^{2-n}$ direction-separated tubes that are contained in a rectangular prism in $\RR^n$ of dimensions $\delta\times 1\times\ldots\times 1$. However, if Conjecture \ref{kakeyaMaximalFunction} is true for a particular value of $n$ and $d$, with $2\leq d\leq n$, then this immediately implies that a slightly weaker variant of \eqref{kLinRd} is also true for this value of $n$ and $d$, where the exponent $n(d-1)/(n-1)d$ is replaced by $(d-1)/d$. The next lemma partially reverses this implication; it says that under certain restrictions, bounds of the form \eqref{kLinRd} imply bounds on the Kakeya maximal function.
\begin{lem}[Multilinear to linear Kakeya]\label{multiLinToLin}
Let $2\leq k\leq n$ and let $d\leq n-k+2$. Suppose that for each $\eps>0$, there is a constant $C(n,\eps)$ so that the inequality
$$
\Big\Vert\Big(\sum_{T_1,\ldots,T_k\in\tubes} \chi_{T_1}\cdots\chi_{T_k}|v_1\wedge\ldots\wedge v_k|^{\frac{k}{d}}\Big)^{\frac{1}{k}} \Big\Vert_{\frac{d}{d-1}} \leq C(n,\eps) \Big(\frac{1}{\delta}\Big)^{\frac{n}{d}-1+\eps}\Big(\sum_{T\in\tubes}|T|\Big)^{\frac{d-1}{d}}
$$
holds for all sets $\tubes$ of direction-separated $1\times\delta$ tubes. 

Then for each $\eps>0$, there is a constant $C^\prime(n,\eps)$ so that the inequality
\begin{equation}\label{sharpPQ}
\Big\Vert \sum_{T\in\tubes}\chi_T\Big\Vert_{\frac{d}{d-1}}\leq C^\prime(n,\eps)\Big(\frac{1}{\delta}\Big)^{\frac{n}{d}-1+\eps}\Big(\sum_{T\in\tubes}|T|\Big)^{\frac{d-1}{d}}
\end{equation}
holds for all sets $\tubes$ of direction-separated $1\times\delta$ tubes.
\end{lem}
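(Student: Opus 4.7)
The strategy is Bourgain--Guth's induction-on-scales reduction from $k$-linear to linear Kakeya estimates, adapted to the direction-separated setting. I would proceed by induction on $\delta^{-1}$, assuming \eqref{sharpPQ} at all scales coarser than $\delta$ and establishing it at scale $\delta$. Fix a parameter $K \geq 1$, to be chosen as a small power of $\delta^{-\eps}$, and cover $\RR^n$ by balls $\{B_j\}$ of radius $K^{-1}$. For each ball, apply the standard dichotomy: either (i, \emph{transverse}) there exist $k$ tubes of $\tubes$ meeting $B_j$ whose wedge product has norm $\gtrsim K^{-k}$, or (ii, \emph{concentrated}) all directions of tubes in $\tubes$ meeting $B_j$ lie in a $K^{-1}$-neighborhood of some $(k-1)$-plane. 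The transverse contribution to $\|\sum_T \chi_T\|_{d/(d-1)}$ will be handled by the $k$-linear hypothesis, while the concentrated contribution will be handled by combining direction separation with the inductive hypothesis.

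For transverse balls, a standard pointwise pigeonhole argument yields
$$ \Big(\sum_{T \in \tubes} \chi_T(x)\Big)\chi_{B_j}(x) \lesssim K^{O(1)}\Big(\sum_{T_1,\ldots,T_k \in \tubes} \chi_{T_1}(x) \cdots \chi_{T_k}(x) |v_{T_1} \wedge \cdots \wedge v_{T_k}|^{k/d}\Big)^{1/k} \chi_{B_j}(x). $$
Integrating over $B_j$, summing over transverse balls, and applying the hypothesized $k$-linear inequality bounds the transverse portion of $\|\sum_T \chi_T\|_{d/(d-1)}$ by
$$ K^{O(1)} \cdot C(n,\eps/2)\, \delta^{-(n/d - 1 + \eps/2)}\Big(\sum_{T\in\tubes} |T|\Big)^{(d-1)/d}. $$
Choosing $K = \delta^{-c\eps}$ with $c$ sufficiently small in terms of the implicit constants absorbs the $K^{O(1)}$ loss into $\delta^{-\eps/2}$, matching the desired bound on the transverse part.

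For concentrated balls, direction separation bounds the number of tubes meeting $B_j$ by $\lesssim K^{-(n-k+1)}\delta^{-(n-1)}$. I would rescale $B_j$ to the unit ball, bundle $\delta$-separated tubes into $K\delta$-separated equivalence classes, and apply the inductive hypothesis at scale $K\delta$ to each bundled configuration. Unwinding the rescaling and summing over concentrated balls, the total concentrated contribution is bounded by $K^{-c_1(n,k,d)}$ times the target right-hand side, where $c_1 > 0$ precisely under the hypothesis $d \leq n-k+2$: the concentration gain $K^{-(n-k+1)(d-1)/d}$ from the $(k-1)$-plane restriction must dominate the inductive loss $K^{n/d - 1 + \eps}$ incurred by working at scale $K\delta$, and a direct computation shows the two balance exactly at $d = n-k+2$. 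The principal obstacle is this balancing: without the condition $d \leq n-k+2$, the Bourgain--Guth induction fails to close (the concentration gain is insufficient to compensate for the inductive loss), and handling the concentrated case would require further ideas --- for instance a lower-dimensional Kakeya input inside the concentrating $(k-1)$-plane, or an iterated application of the multilinear-to-linear passage.
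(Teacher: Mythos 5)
The paper does not actually prove Lemma~\ref{multiLinToLin}; it states that the lemma is standard and refers to Bourgain--Guth~\cite{BG} and Hickman--Rogers~\cite{HR2}. So you are being compared to those references rather than to an in-paper proof. Your high-level framework --- induction on scale, a broad/narrow dichotomy on $K^{-1}$-balls, the $k$-linear hypothesis handling broad balls, and the count $\lesssim K^{-(n-k+1)}\delta^{1-n}$ of direction-separated tubes whose directions lie in the $K^{-1}$-neighborhood of a $(k-1)$-plane --- is the correct skeleton and matches the cited approach.

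The gap is in the narrow case, and it is a real one. First, ``rescale $B_j$ to the unit ball'' cannot be taken literally: dilating the $K^{-1}$-ball by $K$ sends $1\times\delta$ tubes to $K\times K\delta$ objects, which are outside the scope of the inductive hypothesis. The rescaling one actually needs stretches only the $n-k+1$ directions transverse to $V_j$, and this produces ellipsoidal $1\times\delta\times\cdots\times\delta\times K\delta\times\cdots\times K\delta$ tubes, which then require a further decomposition before any Kakeya estimate can be invoked. Second, and more fundamentally, your claimed exponent balance is wrong. Setting $(n-k+1)(d-1)/d = n/d-1$ gives $d=(2n-k+1)/(n-k+2)$, not $d=n-k+2$ (for $n=4$, $k=2$ these are $7/4$ versus $4$). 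Moreover, with those exponents the concentration gain dominates the inductive loss precisely when $d\geq(2n-k+1)/(n-k+2)$, which is a \emph{lower} bound on $d$ --- the opposite sign from the hypothesis $d\leq n-k+2$. So the constraint in the lemma does not emerge from the balance you wrote down. The true source of the condition $d\leq n-k+2$ is the narrow-case mechanism you mention only as an ``alternative'' in your last sentence: the contribution of the slab around a $(k-1)$-plane must be controlled using its lower-dimensional Kakeya content, and it is that step, not the scale-$K\delta$ versus scale-$\delta$ bookkeeping, that forces $d\leq n-k+2$. To complete the proof you would need to carry out the narrow-case computation of~\cite{BG} or~\cite{HR2} in full rather than the heuristic balancing given here.
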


Lemma \ref{multiLinToLin} is standard. A variant of the lemma was proved by Bourgain and Guth \cite{BG} in the context of the restriction problem, and a version similar to the one stated here can be found in \cite{HR2}. Combining Theorem \ref{kLinearKakeyaThm} and Lemma \ref{multiLinToLin}, we obtain the following bounds on the Kakeya maximal function.
\begin{thm}\label{KakeyaMaximalBoundThm}
For each integer $n\geq 2$, a Kakeya maximal function estimate in $\RR^n$ holds at dimension 
\begin{equation}\label{defnDn}
d(n)=\max_{2\leq k\leq n} \min\Big(n-k+2, \frac{n^2+k^2+n-k}{2n}\Big).
\end{equation} 
\end{thm}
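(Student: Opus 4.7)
The plan is to combine Theorem~\ref{kLinearKakeyaThm} with the multilinear Kakeya estimate (Theorem~\ref{multilinearKakeyaThm}) by a multilinear interpolation, and then pass to a linear Kakeya bound via Lemma~\ref{multiLinToLin}. First, since $\tubes$ is direction-separated we have $|\tubes|\le C(n)\delta^{1-n}$, and hence $\sum_{T\in\tubes}|T|\le C(n)$; this lets us freely trade between different powers of $\sum|T|$ at the cost of an $n$-dependent constant. Setting $d_k:=(n^2+k^2+n-k)/(2n)$ and
$$F^{(d)}(x):=\Big(\sum_{T_1,\ldots,T_k\in\tubes}\chi_{T_1}(x)\cdots\chi_{T_k}(x)|v_1\wedge\ldots\wedge v_k|^{k/d}\Big)^{1/k},$$
Theorem~\ref{multilinearKakeyaThm} applied with $\tubes_1=\cdots=\tubes_k=\tubes$ and Theorem~\ref{kLinearKakeyaThm} produce, after absorbing such constants, the two endpoint estimates
$$\|F^{(k)}\|_{k/(k-1)}\le C(n)\,\delta^{1-n/k}\Big(\sum_{T}|T|\Big)^{(k-1)/k}, \qquad \|F^{(d_k)}\|_{d_k/(d_k-1)}\le C(n,\eps)\,\delta^{1-n/d_k+\eps}\Big(\sum_{T}|T|\Big)^{(d_k-1)/d_k}.$$

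For $d\in[k,d_k]$, define $\theta\in[0,1]$ by $1/d=(1-\theta)/k+\theta/d_k$. The factorization $|v_1\wedge\ldots\wedge v_k|^{k/d}=|v_1\wedge\ldots\wedge v_k|^{(1-\theta)}\,|v_1\wedge\ldots\wedge v_k|^{\theta k/d_k}$, combined with H\"older's inequality applied to the sum over $(T_1,\ldots,T_k)$, gives the pointwise bound $F^{(d)}(x)\le F^{(k)}(x)^{1-\theta}F^{(d_k)}(x)^{\theta}$. A second H\"older in $L^p$, using the identity $(d-1)/d=(1-\theta)(k-1)/k+\theta(d_k-1)/d_k$, then yields
$$\|F^{(d)}\|_{d/(d-1)}\le \|F^{(k)}\|_{k/(k-1)}^{1-\theta}\,\|F^{(d_k)}\|_{d_k/(d_k-1)}^{\theta}.$$
Substituting the two endpoint estimates, the $\delta$-exponent combines to $1-n/d+\theta\eps$ (using $(1-\theta)/k+\theta/d_k=1/d$), and the $\sum|T|$-exponent combines to $(d-1)/d$, so for every $d\in[k,d_k]$,
$$\|F^{(d)}\|_{d/(d-1)}\le C(n,\eps)\,\delta^{1-n/d+\eps}\Big(\sum_{T}|T|\Big)^{(d-1)/d}.$$

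This is precisely the hypothesis of Lemma~\ref{multiLinToLin}, which additionally requires $d\le n-k+2$. For each integer $k\in\{2,\ldots,n\}$ with $k\le(n+2)/2$---a condition that ensures $k\le n-k+2$ and, combined with the easy inequality $d_k\ge k$ (verified directly from the formula), guarantees $\min(d_k,n-k+2)\ge k$---the choice $d=\min(d_k,n-k+2)$ lies in the valid range $[k,\min(d_k,n-k+2)]$, and Lemma~\ref{multiLinToLin} delivers a Kakeya maximal function estimate at dimension $\min(d_k,n-k+2)$. Maximizing over such $k$ gives Kakeya at dimension $\max_{2\le k\le(n+2)/2}\min(d_k,n-k+2)$. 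Since the real-valued crossover $d_k=n-k+2$ occurs at $k^\ast\approx(\sqrt{2}-1)n<(n+2)/2$, this maximum coincides with the full one in \eqref{defnDn}: any integer $k>(n+2)/2$ contributes only $n-k+2<(n+2)/2$, which is dominated by the contribution of some valid $k$ near $k^\ast$.

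The main technical obstacle is the bookkeeping in the interpolation: one must verify that the three relevant exponents---on $\delta$, on the wedge factor $|v_1\wedge\ldots\wedge v_k|$, and on $\sum|T|$---all transform affinely under the convex combination $1/d=(1-\theta)/k+\theta/d_k$. The choice to interpolate against Theorem~\ref{multilinearKakeyaThm} precisely at the endpoint $d=k$ (rather than at any other value) is dictated by this compatibility requirement, since that is the unique $d$ for which Theorem~\ref{multilinearKakeyaThm} matches the form of $F^{(d)}$.
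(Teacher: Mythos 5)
Your proof is correct and fills in the details that the paper compresses into the one-line remark ``combining Theorem~\ref{kLinearKakeyaThm} and Lemma~\ref{multiLinToLin}.'' Let me flag the one genuine subtlety you handled and a small typo.

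The subtle point you correctly identified and resolved is that Theorem~\ref{kLinearKakeyaThm} gives the $k$-linear estimate only at the specific value $d_k=(n^2+k^2+n-k)/(2n)$, while Lemma~\ref{multiLinToLin} requires $d\le n-k+2$. For some $n$ the maximum in \eqref{defnDn} is achieved at a $k$ for which $d_k>n-k+2$ (e.g.\ $n=50$, $k=22$: $d_{22}=30.12$ while $n-k+2=30$), so one cannot simply apply Theorem~\ref{kLinearKakeyaThm} at $d_k$ and plug into the lemma. Your log-convexity interpolation against Theorem~\ref{multilinearKakeyaThm} (the $d=k$ endpoint), using the pointwise H\"older bound $F^{(d)}\le (F^{(k)})^{1-\theta}(F^{(d_k)})^{\theta}$ with $1/d=(1-\theta)/k+\theta/d_k$ and the compatible norm-interpolation $(d-1)/d=(1-\theta)(k-1)/k+\theta(d_k-1)/d_k$, is exactly the right way to extend the estimate to all $d\in[k,d_k]$, after which taking $d=\min(d_k,n-k+2)$ is legitimate. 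Your observation that this requires $k\le(n+2)/2$ so that the target $d$ lies in the interpolation range $[k,d_k]$, together with the check that the overall maximum is attained for such $k$, closes the argument.

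One minor notational slip: the $\delta$-powers from Theorem~\ref{kLinearKakeyaThm} and in the hypothesis of Lemma~\ref{multiLinToLin} are $\bigl(\tfrac{1}{\delta}\bigr)^{n/d-1+\eps}=\delta^{1-n/d-\eps}$, not $\delta^{1-n/d+\eps}$ as you wrote; the sign on $\eps$ is flipped. This doesn't affect the argument --- the interpolated exponent becomes $1-n/d-\theta\eps\ge 1-n/d-\eps$ --- but the ``$+\eps$'' as written would claim a stronger bound than the endpoint actually provides. Also, applying Theorem~\ref{multilinearKakeyaThm} with $\tubes_1=\cdots=\tubes_k=\tubes$ literally gives $\prod_j(\sum_{T_j}|T_j|)^{1/k}=\sum_T|T|$, i.e.\ exponent $1$ rather than $(k-1)/k$; you correctly invoke $\sum|T|\lesssim 1$ for direction-separated tubes to convert this, but it would be cleaner to state the actual exponent before absorbing.
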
 
Note that the minimum in \eqref{defnDn} occurs when $k$ is the floor or ceiling of 
$$
\sqrt{2n^2+2n+\frac{1}{4}}-n+1/2=(\sqrt 2-1)n+\frac{1+\sqrt{2}}{2}+O(1/n).
$$ 
In particular, $d(n) \geq (2-\sqrt 2)n$. The Kakeya maximal function estimate from Theorem \ref{KakeyaMaximalBoundThm} is new in all dimensions except $n=2,3,4,6$. It also implies new bounds on the Hausdorff dimension of Kakeya sets in $\RR^n$ for certain (but not all) values of $n$. The table below shows the current best known bounds for $d(n)$
\begin{center}
\begin{tabular}{c|c|l||c|c|l}
$n$ & $d(n)$ & &  $n$ & $d(n)$\\
\hline
2 & 2 & C\'ordoba \cite{Cor} & 					6 & 4 & Wolff \cite{W1}\\
3 & $2.5+\eps_0$ & Katz-Zahl \cite{KZ,KZ2}& 	7 & 4.857 & Theorem \ref{KakeyaMaximalBoundThm}\\
4 & $3.059$ & Katz-Zahl \cite{KZ2} &			8 & 5.25 & Theorem \ref{KakeyaMaximalBoundThm} \\
5 & 3.6 & Theorem \ref{KakeyaMaximalBoundThm}&	$\smash{\vdots}$ & $\smash{\vdots}$ & $\phantom{1111}$ $\smash{\vdots}$  \\
\end{tabular}
\end{center}
In high dimensions, the previous best-known bound on the Kakeya maximal function was $d(n)=(4n+3)/7$, due to Katz and Tao \cite{KT}. In certain intermediate dimensions $5\leq n\leq 100$ the previous best-known bound was due to Hickman and Rogers \cite{HR2}. 

\medskip

\begin{rem}
Recently, the author became aware that Hickman, Rogers, and Zhang have concurrently and independently proved Theorem \ref{KakeyaMaximalBoundThm}. They did this by proving a nearly identical version of Theorem \ref{polyWolffFlagsVarities} (stated below), using similar arguments. Instead of proving Theorem \ref{kLinearKakeyaThm}, they established what is known as a $k$-broad estimate. While $k$-broad estimates are slightly weaker than the corresponding $k$-linear estimates, they are nonetheless sufficient to use (a variant of) the multilinear to linear Kakeya argument from Lemma \ref{multiLinToLin}.
\end{rem}

\subsection{Tubes inside thickened algebraic varieties}\label{insideThickenedVarieties}
An important new ingredient used in the proof of Theorem \ref{kLinearKakeyaThm} is a new geometric inequality that bounds the number of direction-separated tubes that can cluster near a nested sequence of low-degree varieties. We will discuss this further below.

A set $S\subset \RR^n$ is called \emph{semi-algebraic} if it can be written as a finite union of sets of the form 
$$
\{x\in\RR^n\colon P_1(x)>0,\ldots,P_k(x)>0,P_{k+1}(x)=0,\ldots,P_{k+\ell}(x)=0\},
$$
where $P_1,\ldots,P_{k+\ell}$ are polynomials. A union of such sets is called a presentation of $S$. The complexity of a presentation is the sum of the degrees of the polynomials involved (with multiplicities). The complexity of a semi-algebraic set $S$ is the minimum complexity of its presentations.

In \cite{KR}, Katz and Rogers resolved a conjecture of Guth \cite{G3} and Guth-Zahl \cite{GZ} concerning the number of direction-separated tubes that can have large intersection with a semi-algebraic set. 
\begin{thm}[Direction-separated tubes obey the polynomial Wolff axioms]\label{polyWolffAxiomsThm}
Let $n$ and $E$ be integers, with $n\geq 2$, and let $\eps>0$. Then there is a constant $C(n,E,\eps)$ so that for every semi-algebraic set $S\subset\RR^n$ of complexity at most $E$ and for every set $\tubes$ of direction-separated $1\times \delta$ tubes, we have
\begin{equation}\label{polyWolffAxiomsIneq}
\#\{T\in\tubes\colon |T\cap S|\geq r|T|\}\leq C(n,E,\eps)|S|\delta^{1-n-\eps}r^{-n}.
\end{equation}
\end{thm}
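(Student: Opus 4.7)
The plan is to combine Gromov's algebraic lemma (the Yomdin-Gromov $C^k$-reparameterization theorem) with a scale-by-scale linearization that reduces the statement to an affine model problem in which a direct counting argument suffices. After rescaling, I would assume $S\subset[0,1]^n$. Applying Gromov's lemma at a smoothness exponent $k=k(n,\eps)$ taken large, I would decompose $S$ into $C(n,E,k)$ semi-algebraic pieces $S_\alpha=\phi_\alpha((0,1)^{m_\alpha})$, where $\phi_\alpha:(0,1)^{m_\alpha}\to\RR^n$ is $C^k$ with $\|\phi_\alpha\|_{C^k}\leq 1$. Only the pieces of full dimension $m_\alpha=n$ can be met by a tube in positive Lebesgue measure, so I would focus on these; the contribution of lower-dimensional pieces is absorbed into lower-order terms.

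On each full-dimensional piece, I would dyadically decompose the parameter cube $(0,1)^n$ according to the size of $|J\phi_\alpha|$ and the singular values of $D\phi_\alpha$, producing subcubes of side $\rho\in[\delta^{1/2},1]$ on which $\phi_\alpha$ is within $O(\rho^2)$ of its first-order Taylor approximation $L_\rho$. The image of such a subcube is, up to an $O(\delta)$ error matching the tube width, the parallelepiped $L_\rho((0,\rho)^n)$. A direction-separated $1\times\delta$ tube with large intersection in this parallelepiped pulls back through $L_\rho^{-1}$ to an approximate $1\times\delta$ tube in the parameter cube, and direction-separation of the original family transfers, after accounting for the bounded distortion of $L_\rho$, to an effective direction-separation of the pulled-back family. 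For the resulting affine model problem---direction-separated tubes with $|T\cap P|\geq r|T|$ inside a parallelepiped $P$ of volume $V$---one obtains the bound $C V\delta^{1-n}r^{-n}$ by a direct covering argument: partition $P$ into $\delta$-cubes, use that the intersection condition forces the tube to be $(\delta/r)$-close to parallel to one of $O(r^{-n+1})$ distinguished directions in each cube, and sum, invoking direction-separation to bound the per-direction contribution by $1$. Combining across the dyadic scales and the Gromov pieces then gives the desired estimate, with a loss of only $\delta^{-\eps}$ coming from the total number of cells (provided $k$ is large enough in terms of $\eps$ and $n$).

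The main obstacle in this plan is handling the scales at which $\phi_\alpha$ becomes degenerate: when the smallest singular value of $D\phi_\alpha$ drops below $\delta$, the affine approximation no longer resolves the tube, and one risks double-counting or missing tubes that straddle the transition between the regular and the degenerate regions. The bounded $C^k$-norm of $\phi_\alpha$ controls how fast such degeneracies can occur, so one can further subdivide the offending regions by a second application of Gromov's lemma, restricted to the (semi-algebraic) degenerate locus, and apply the argument inductively on the effective dimension until arriving at the trivial zero-dimensional case. Ensuring that the total dyadic loss across all levels of this induction remains $\delta^{-\eps}$, and that the direction-separation hypothesis is not destroyed by the composition of affine approximations, is the principal technical difficulty and is what forces $k$ to be chosen large in a quantitative way.
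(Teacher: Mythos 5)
The paper does not actually prove Theorem~\ref{polyWolffAxiomsThm}; it cites the result of Katz and Rogers \cite{KR}. However, the paper's Lemma~\ref{extSemiAlgSetsLem} proves a closely related statement and its proof is stated to ``closely mirror'' the Katz--Rogers arguments, so it is a fair reference point for assessing your proposal.

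Your plan applies the Yomdin--Gromov algebraic lemma to the set $S$ itself and then tries to move tubes through the reparameterizing maps $\phi_\alpha$. This runs into a genuine gap that you partly flag but do not resolve: in the case that actually matters --- $S$ a $\delta$-thin neighborhood of a low-dimensional variety, which is precisely where \eqref{polyWolffAxiomsIneq} is nontrivial --- the linearization $L_\rho = D\phi_\alpha$ necessarily has singular values as small as $\delta$ on essentially \emph{all} of the parameter cube, so the degenerate regime is not a boundary locus you can excise and handle by induction but is the whole problem. In that regime the preimage of a $1\times\delta$ tube under $L_\rho$ is not a tube of any aspect ratio (short directions of thickness $\delta$ get stretched to unit size), and ``effective direction-separation'' of the pullbacks fails quantitatively: two tubes at angle $\delta$ can pull back to tubes at angle anywhere between $\delta^2$ and $1$. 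A second, independent problem is the bookkeeping: even if each image $\phi_\alpha(Q_\rho)$ were a clean parallelepiped, a tube with $|T\cap S|\geq r|T|$ need not have $|T\cap \phi_\alpha(Q_\rho)|\geq r'|T|$ for any $r'$ you control, since the intersection can be spread over $\gtrsim \delta^{-n/2}$ subcubes of side $\rho\sim\delta^{1/2}$, and the naive pigeonhole loses far more than the $\delta^{-\eps}$ you can afford. The duality bound $\sum_T|T\cap S_\rho|\leq\delta^{1-n}|S_\rho|$ is also too weak, giving only $r^{-1}$ rather than $r^{-n}$. The correct route, used both in \cite{KR} and in the proof of Lemma~\ref{extSemiAlgSetsLem} in this paper, is to apply Gromov's lemma not to $S$ but to a semi-algebraic parametrization $\mathcal{L}\subset\RR^{2n-2}$ of the \emph{lines} (or line--slope pairs) passing through $S$ for a long time, and then to compare slices of this set at different heights $H_t$ via a polynomial growth estimate on the Jacobian determinant of the slice map (the univariate lemma \eqref{supBoundedByIntegral}). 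That formulation sidesteps the degenerate-pullback issue entirely, because the objects being reparameterized are line parameters, which are never squeezed by the thinness of $S$.
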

A particularly interesting example occurs when $S$ is a thin neighborhood of an algebraic variety and $r$ is comparable to the diameter of $S$.
\begin{cor}\label{polyWolffAxiomsCor}
Let $n,d,$ and $E$ be integers with $1\leq d<n$, and let $\eps>0$. Then there is a constant $C(n,E,\eps)>0$ so that for every algebraic variety $Z\subset\RR^n$ of codimension $d$ that is defined by polynomials of degree at most $E$, for every set $\tubes$ of direction-separated $1\times \delta$ tubes, and for every $x\in\RR^n$, we have
\begin{equation}\label{polyWolffAxiomsVarietyIneq}
\#\{T\in\tubes\colon |T\cap N_{2\delta}(Z)\cap B(x,r)|\geq r|T|\}\leq C(n,E,\eps) \frac{\delta^{d+1-n-\eps}}{r^d}.
\end{equation}
\end{cor}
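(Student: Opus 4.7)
The plan is to apply Theorem \ref{polyWolffAxiomsThm} directly to the set $S = N_{2\delta}(Z)\cap B(x,r)$. To do this I need to verify two things: (a) that $S$ is semi-algebraic of complexity bounded by some $E'=E'(n,E)$, and (b) that $|S|\lesssim_{n,E} r^{n-d}\delta^d$ whenever $r\geq \delta$. Given these two facts, Theorem \ref{polyWolffAxiomsThm} will yield
$$
\#\{T\in\tubes\colon |T\cap S|\geq r|T|\}\leq C(n,E,\eps)\,|S|\delta^{1-n-\eps}r^{-n}\leq C(n,E,\eps)\,r^{n-d}\delta^d\cdot\delta^{1-n-\eps}r^{-n}=C(n,E,\eps)\frac{\delta^{d+1-n-\eps}}{r^d},
$$
as desired.

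For (a), I would write $N_{2\delta}(Z) = \{y\in\RR^n\colon \exists z\in Z,\ |y-z|^2<4\delta^2\}$. Since $Z$ is cut out by polynomials of degree at most $E$, this is a first-order definable set over the reals of total complexity depending only on $n$ and $E$; by Tarski--Seidenberg quantifier elimination the resulting semi-algebraic presentation of $N_{2\delta}(Z)$ has complexity bounded by some $E'(n,E)$. Intersecting with $B(x,r)$ adds one additional polynomial of degree $2$, so $S$ has complexity at most $E'(n,E)+2$.

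For (b), I would invoke Wongkew's theorem (or equivalently the volume bound of Solernó--Basu--Pollack--Roy) for the $\delta$-neighborhood of a real algebraic variety: if $Z\subset\RR^n$ has codimension $d$ and is defined by polynomials of degree $\leq E$, then for every ball $B(x,r)$ with $r\geq\delta$,
$$
|N_{2\delta}(Z)\cap B(x,r)|\leq C(n,E)\,r^{n-d}\delta^d.
$$
Combined with the computation above, this proves \eqref{polyWolffAxiomsVarietyIneq} in the regime $r\geq\delta$.

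Finally, I would handle the complementary regime $r<\delta$ by the trivial volume bound $|S|\leq|B(x,r)|\lesssim r^n$; plugging into Theorem \ref{polyWolffAxiomsThm} gives a count of $\lesssim\delta^{1-n-\eps}$, which is at most $\delta^{d+1-n-\eps}r^{-d}$ since $r\leq\delta$. The only substantive ingredient is the Wongkew-type volume estimate; the semi-algebraic complexity bookkeeping is routine, and once both are in hand the corollary is a one-line substitution into Theorem \ref{polyWolffAxiomsThm}.
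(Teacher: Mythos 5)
Your proof is correct and is essentially the intended derivation: the paper states Corollary~\ref{polyWolffAxiomsCor} without an explicit proof, treating it as an immediate consequence of Theorem~\ref{polyWolffAxiomsThm} in the case where $S$ is a thickened variety, and the only nontrivial input is precisely the Wongkew volume bound (Theorem~\ref{WonkewThm}), which you invoke. The complexity bookkeeping via Tarski--Seidenberg and the trivial handling of $r<\delta$ are both standard and correctly executed; the one minor gloss is that Wongkew bounds $|N_{2\delta}(Z\cap B)|$ rather than $|N_{2\delta}(Z)\cap B|$, so one should enlarge the ball by $O(\delta)$ before applying it, but this loses only a constant when $r\gtrsim\delta$ and does not affect the result.
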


Corollary \ref{polyWolffAxiomsCor} was used by Guth and the author in \cite{GZ}\footnote{Actually, the paper \cite{GZ} preceded \cite{KR}; it proved a conditional bound that relied on a special case of Corollary \ref{polyWolffAxiomsCor}. This special case was proved by the author in \cite{Z}, and then shortly afterward the general case was proved by Katz and Rogers in \cite{KR}.} and by Katz and the author in \cite{KZ2} to obtain improved bounds on the Kakeya maximal function in $\RR^4$. It was used by Hickman and Rogers \cite{HR1,HR2} to obtain improved Kakeya bounds for certain dimensions $n\geq 5$, and to obtain improved restriction estimates in dimension $\geq 13$, as well as dimension $4,5,7,9,$ and $11.$ 

We will prove the following generalization of Corollary \ref{polyWolffAxiomsCor}.

\begin{thm}[Direction-separated tubes and sequences varieties]\label{polyWolffFlagsVarities}
Let $n$ and $E$ be integers with $n\geq 2$, and let $\eps>0$. Then there is a constant $C(n,E,\eps)>0$ so that the following holds. Let $Z_1\supset \cdots\supset Z_d$ be a nested sequence of real algebraic varieties, each defined by polynomials of degree at most $E$. Suppose that $Z_i$ has codimension at least $i$. Let $1\geq r_1\geq\ldots\geq r_d\geq\delta$. Let $\tubes$ be a set of direction-separated $1\times \delta$ tubes and let $x\in\RR^n$. Then
\begin{equation}\label{boundNumberDirectionSeparatedTubesVarietyVersion}
\#\{T\in\tubes\colon |T\cap N_{2\delta}(Z_i) \cap B(x,r_i)| \geq r_i|T|,\ i=1,\ldots,d\}
\leq C(n,E,\eps) \frac{\delta^{d+1-n-\eps}}{r_1 \cdots r_d}.
\end{equation}
\end{thm}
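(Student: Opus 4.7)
I would prove Theorem~\ref{polyWolffFlagsVarities} by induction on the flag length $d$. The base case $d = 1$ is precisely Corollary~\ref{polyWolffAxiomsCor}. The inductive step reduces a length-$d$ flag in $\RR^n$ to a length-$(d-1)$ flag in $\RR^{n-1}$; crucially, the extra factor $1/r_1$ in the target bound does not come from the inductive hypothesis itself but from a multiplicity bound on the number of original tubes that can share a projected direction. Gromov's algebraic lemma, which parameterizes bounded-complexity semi-algebraic sets by $O_E(1)$ polynomial maps of bounded degree from a unit cube (with any prescribed $C^r$ control), is the central technical tool.

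For the inductive step, I would first assume without loss of generality that $Z_1$ has codimension exactly $1$, by replacing it with the zero set of one of its defining polynomials of degree at most $E$ (this only enlarges $N_{2\delta}(Z_1)$). Apply Gromov's algebraic lemma to $Z_1 \cap B(x, r_1)$ to produce $O_E(1)$ polynomial maps $\Phi_j\colon [0,1]^{n-1} \to B(x, r_1)$ of bounded degree whose images cover $Z_1 \cap B(x, r_1)$, with uniformly controlled derivatives (after appropriate rescaling of $B(x, r_1)$ to the unit ball). For each tube $T$ in our count, an $\Omega_E(1)$-fraction of $T \cap B(x, r_1)$ lies in $N_{2\delta}(\Phi_j([0,1]^{n-1}))$ for some $j$; pigeonhole on $j$. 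Extend $\Phi_j$ using a unit normal vector field into a bi-Lipschitz diffeomorphism $\Psi_j$ from a flat slab $[0,1]^{n-1} \times [-c, c]$ onto a tubular neighborhood of $\Phi_j([0,1]^{n-1})$. In the straightened slab the $Z_1$-condition forces the normal component of each tube's direction to satisfy $|v_n| = O(\delta/r_1)$, and original $\delta$-direction-separation is preserved up to constants.

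Project the straightened tubes orthogonally onto $\RR^{n-1}$ by dropping the $n$-th coordinate. Two key observations complete the reduction. First, grouping projected directions into $\delta$-balls in $S^{n-2}$, any one such ball contains at most $O(1/r_1)$ original tubes, because their $\delta$-separated $v_n$ values must fit inside an interval of length $O(\delta/r_1)$. Second, choosing one representative per nonempty $\delta$-ball yields a $\delta$-direction-separated family of tubes in $\RR^{n-1}$. Since $Z_i \subset Z_1$ for all $i$, the pullbacks $\Psi_j^{-1}(Z_i)$ lie inside $\RR^{n-1} \times \{0\}$, and when viewed as subsets of $\RR^{n-1}$ they form a nested sequence where $\Psi_j^{-1}(Z_i)$ has codimension at least $i - 1$, matching the hypothesis for a length-$(d-1)$ flag. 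Apply the inductive hypothesis in $\RR^{n-1}$ to this flag with radii $r_2 \geq \cdots \geq r_d$ to bound the number of representatives by $O(\delta^{d-n+1-\eps}/(r_2 \cdots r_d))$; multiplying by the $O(1/r_1)$ multiplicity and summing over the $O_E(1)$ Gromov pieces gives the target bound $O(\delta^{d+1-n-\eps}/(r_1 \cdots r_d))$.

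\textbf{Main obstacle.} The principal difficulty is that the straightening diffeomorphism $\Psi_j$, built from $\Phi_j$ by a normal-extension construction, is only $C^r$-bounded and not polynomial, so the pullbacks $\Psi_j^{-1}(Z_i)$ are bounded-complexity semi-algebraic sets rather than algebraic varieties, whereas the inductive hypothesis is stated for algebraic varieties. I would handle this either by proving a mildly strengthened version of the theorem for flags of bounded-complexity semi-algebraic sets jointly by induction (using Theorem~\ref{polyWolffAxiomsThm} in place of Corollary~\ref{polyWolffAxiomsCor} as the base case), or by reapplying Gromov's lemma to cover each $\Psi_j^{-1}(Z_i)$ by $O_E(1)$ algebraic varieties of bounded degree. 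Secondary issues include controlling the accumulation of $\delta^{-\eps}$ losses across the $d$ inductive steps and verifying that the multiplicity bound and the direction-separation behave well under the bi-Lipschitz distortion introduced by $\Psi_j$; both should succumb to careful book-keeping with the dependence of Gromov's lemma on the derivative order $r$.
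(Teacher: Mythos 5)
Your proposal takes a genuinely different route from the paper, and it contains a gap that I do not think can be repaired using Gromov's lemma in the form stated in Theorem~\ref{gromovAlgLemma}.

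The critical step in your plan is to extend each Gromov parameterization $\Phi_j\colon[0,1]^{n-1}\to Z_1\cap B(x,r_1)$, via a unit normal vector field, to a bi-Lipschitz diffeomorphism $\Psi_j$ from a flat slab onto a uniform tubular neighborhood of the image, and then to push the tubes forward to $\RR^{n-1}$. This requires two things that Gromov's lemma simply does not supply. First, the maps $\phi_i$ in Theorem~\ref{gromovAlgLemma} are only guaranteed to have bounded \emph{forward} derivatives up to order $r$; they need not be injective, and even when injective their differentials can be arbitrarily degenerate (a constant map satisfies the hypotheses). Without a uniform lower bound on the Jacobian there is no well-defined normal field, no uniform-width tubular neighborhood, and no bi-Lipschitz inverse, so the ``straightened'' picture in which $|v_n|=O(\delta/r_1)$ does not exist. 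Second, even if $\Psi_j$ were a bi-Lipschitz $C^r$ diffeomorphism with controlled inverse, it is not affine, so straight $1\times\delta$ tubes become curved $\delta$-neighborhoods of $C^r$ curves in the new coordinates; one would then have to re-establish a tube structure, direction separation, and the flag-intersection conditions for these curved objects before projecting, none of which is addressed. A third, smaller issue: the statement that for each relevant tube an $\Omega_E(1)$-fraction of $T\cap B(x,r_1)$ lies near a single Gromov piece $\Phi_j([0,1]^{n-1})$ does not localize the tube to that piece---the tube can weave in and out of several cells---so the reduction to a single chart is not automatic.

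The paper's proof is organized very differently precisely to sidestep the inversion problem. It reduces Theorem~\ref{polyWolffFlagsVarities} to Lemma~\ref{CountingTubesInsideSetsLem}, which builds a nested sequence of semi-algebraic sets $\tilde S_d\subset\cdots\subset\tilde S_1$ by alternately fattening tubes and intersecting with neighborhoods $N_{2\rho_i}(S_i)$, bounds $|\tilde S_1|$ via Corollary~\ref{FatSemiAlgSetsLem} (fattening does not inflate volume), and finishes with a single application of the Katz--Rogers bound, Theorem~\ref{polyWolffAxiomsThm}. Gromov's lemma enters only inside Lemma~\ref{extSemiAlgSetsLem}, and there it is used in a ``volume-only'' way: after parameterizing the set $\mathcal L$ of (base point, slope) pairs and Taylor-approximating by polynomials $F,G$, one compares $|(F+tG)(U)|$ at different $t$ via the change-of-variables formula and Lagrange interpolation on the Jacobian determinant, always pushing volume forward and never inverting a chart. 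So the two uses of Gromov's lemma are fundamentally different: the paper only needs upper bounds on forward derivatives, whereas your argument needs lower bounds on the differential, which do not hold. If you want to pursue a dimension-reduction strategy, you would need a substantially stronger parameterization theorem (e.g., a cellular decomposition with uniformly bi-Lipschitz charts) together with an argument for approximating curved tube-images by straight tubes without loss; as written, the inductive step does not go through.
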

We can see that Theorem \ref{polyWolffFlagsVarities} is indeed a generalization of Corollary \ref{polyWolffAxiomsCor} by taking $Z_i=Z$ and $r_i=r$ for each $i=1,\ldots,d$. Theorem \ref{polyWolffFlagsVarities} will be proved in Section \ref{polyWolffNestedSequences} below. 

\subsection{Notation}
If $X\subset\RR^n$, we will use $|X|$ to denote the Lebesgue measure of $X$, and we will use $\#X$ to denote the cardinality of $X$. If $\rho>0$, we will write $N_{\rho}(X)$ to denote the $\rho$-neighborhood of $X$, and $\mathcal{E}_{\rho}(X)$ to denote the $\rho$-covering number of $X$. Finally, we will define $\operatorname{CC}(X)$ to be the set of (Euclidean) connected components of $X$.

If $T\subset\RR^n$ is a $\lambda\times\delta$ tube, we will define $v(T)$ to be the unit vector parallel to the line $L$ coaxial with $T$. Note that both $v(T)$ and $-v(T)$ are parallel to $L$. For concreteness, we will select $v(T)=(v_1,\ldots,v_n)$ so that the last non-zero coordinate is positive.


We will write $A\lesssim B$ or $A = O(B)$ to mean there exists a constant $C$, depending only on the ambient dimension $n$, so that $A\leq CB$. If the constant $C$ is allowed to depend on additional parameter, such as $\eps$, then we will write $A\lesssim_{\eps}B$. To simplify the statement of results such as Theorem \ref{kLinearKakeyaThm} and Theorem \ref{KakeyaMaximalBoundThm}, we will write $A\lessapprox B$ to mean that $A\lesssim_{\eps}\delta^{-\eps}B$ for each $\eps>0$.  

\subsection{Thanks}
The author would like to thank Larry Guth and Nets Katz for many helpful conversations. The author would like to thank Jonathan Hickman for comments and corrections to a previous version of this manuscript.

\section{The polynomial Wolff axioms for nested sequences of varieties}\label{polyWolffNestedSequences}
In this section we will prove Theorem \ref{polyWolffFlagsVarities}. We will begin by recalling some standard tools from real algebraic geometry.

\subsection{Tools from real algebraic geometry}
We will begin with some basic definitions and results from real algebraic geometry. Further details can be found in standard references such as \cite[Chapter 2]{BCR} and in \cite[Section 3]{Burguet}.

If $S\subset\RR^n$ is a semi-algebraic set, we define the dimension of $S$ to be the Krull dimension of the ring $\RR[x_1,\ldots,x_n]/I(S)$, where $I(S)$ is the ideal of polynomials that vanish on $S$. Equivalently, the dimension of $S$ is the largest integer $d$ so that there exists a continuous injection $\phi\colon (0,1)^d\to S$.  In practice, we will be interested in two types of semi-algebraic sets. The first are semi-algebraic subsets of $\RR^n$ that have non-empty interior; such sets always have dimension $n$. The second are sets of the form $Z\cap B$, where $Z\subset\RR^n$ is an algebraic variety of dimension $d<n$ and $B$ is an open (Euclidean) ball; sets of this form always have dimension at most $d$. The following theorem of Wongkew \cite{Wongkew} bounds the covering number of the second type of semi-algebraic set.
\begin{thm}\label{WonkewThm}
Let $Z\subset\RR^n$ be a real algebraic variety of dimension $d$ whose defining polynomials have degree at most $E$. Let $B\subset\RR^n$ be a ball of radius $r$. Then there exists a constant $C(n)$ depending only on $n$ so that for all $\rho>0$,
$$
|N_{\rho}(Z\cap B)|\leq C(n)\sum_{j=n-d}^n E^j\rho^jr^{n-j}.
$$
In particular, if $0<\rho\leq r$ then there is a constant $C(n,E)$ so that
\begin{equation}\label{volumeBoundXcapB}
|N_{\rho}(Z\cap B)|\leq C(n,E) \rho^{n-d}r^{d}.
\end{equation}
\end{thm}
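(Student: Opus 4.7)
The plan is to prove the ``in particular'' estimate $|N_{\rho}(Z\cap B)|\lesssim_{n,E}\rho^{n-d}r^{d}$ by induction on the dimension $d$ of $Z$; the general sum estimate then follows by interpolating this with the trivial bound $|N_{\rho}(Z\cap B)|\leq |N_\rho(B)|\lesssim (r+\rho)^{n}$. The base case $d=0$ is immediate from B\'ezout's theorem: a zero-dimensional real variety cut out by polynomials of degree at most $E$ in $\RR^{n}$ contains at most $O_{n}(E^{n})$ points, and each contributes a ball of radius $\rho$ to $N_{\rho}(Z\cap B)$, giving $|N_{\rho}(Z\cap B)|\lesssim_{n}E^{n}\rho^{n}$.

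For the inductive step, decompose $Z=Z_{\mathrm{reg}}\sqcup Z_{\mathrm{sing}}$, where $Z_{\mathrm{reg}}$ is the smooth $d$-dimensional stratum and $Z_{\mathrm{sing}}$ is its complement in $Z$. The singular locus is itself a real algebraic variety, defined by the original polynomials together with determinants of maximal minors of their Jacobian, so its defining polynomials have degree bounded in terms of $n$ and $E$; crucially $\dim Z_{\mathrm{sing}}\leq d-1$. The inductive hypothesis applied to $Z_{\mathrm{sing}}$ yields $|N_{\rho}(Z_{\mathrm{sing}}\cap B)|\lesssim_{n,E}\rho^{n-d+1}r^{d-1}$, which is dominated by $\rho^{n-d}r^{d}$ when $\rho\leq r$.

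To handle $Z_{\mathrm{reg}}$, the argument proceeds in two steps. First, bound its $d$-dimensional Hausdorff measure inside $B$ using a Cauchy--Crofton identity: writing $\mathcal{H}^{d}(Z_{\mathrm{reg}}\cap B)$ as an integral over affine $(n-d)$-planes $L$ of $\#(Z_{\mathrm{reg}}\cap B\cap L)$, invoking B\'ezout to bound $\#(Z\cap L)\leq E^{d}$ for generic $L$, and noting that the set of $(n-d)$-planes meeting $B$ has Haar measure of order $r^{d}$, one obtains $\mathcal{H}^{d}(Z_{\mathrm{reg}}\cap B)\lesssim_{n,E}r^{d}$. Second, promote this to the volume estimate $|N_{\rho}(Z_{\mathrm{reg}}\cap B)|\lesssim \rho^{n-d}\,\mathcal{H}^{d}(Z_{\mathrm{reg}}\cap B)$ via a tubular-neighborhood bound valid on the smooth $d$-manifold $Z_{\mathrm{reg}}$.

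The main obstacle is this last tubular-neighborhood step: on a general smooth variety the normal bundle need not have uniformly bounded reach, and one must avoid losing constants that depend on the (possibly unbounded) curvature of $Z_{\mathrm{reg}}$. The resolution is to subdivide $Z_{\mathrm{reg}}$ into $O_{n,E}(1)$ semi-algebraic pieces of controlled complexity on each of which a quantitative implicit function theorem produces a uniform tubular neighborhood, with both the number of pieces and the implicit-function constants depending only on $n$ and $E$ (this is a typical application of Gromov-type algebraic decomposition results in the semi-algebraic category). Summing the resulting volume bounds over these pieces, and adding the contribution from $Z_{\mathrm{sing}}$ supplied by the inductive hypothesis, closes the induction and yields the stated estimate.
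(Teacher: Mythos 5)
The paper does not prove this statement; it is quoted directly from Wongkew \cite{Wongkew}, so there is no ``paper's own proof'' to compare against. Your proposal should therefore be judged on its own merits, and it has a genuine gap.

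The problematic step is the promotion $|N_{\rho}(Z_{\mathrm{reg}}\cap B)|\lesssim \rho^{n-d}\,\mathcal{H}^{d}(Z_{\mathrm{reg}}\cap B)$. This inequality is simply false for general smooth $d$-manifolds, even after cutting into pieces with bounded curvature, because it fails when the piece is small: a $d$-submanifold $M$ with $\operatorname{diam} M\leq\rho$ can have $\mathcal{H}^{d}(M)$ as small as one likes, yet $|N_\rho(M)|\geq c_n\rho^{n}$. So the intended lemma degenerates precisely at the short pieces that any decomposition will produce, and you must separately argue that the extra $O_{n,E}(1)\cdot\rho^{n}$ contribution from short pieces is absorbed (which it is, when $\rho\leq r$, but this needs to be said). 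You also wave at $\dim Z_{\mathrm{sing}}\leq d-1$, which is delicate for \emph{real} varieties: adding Jacobian-minor equations does not in general cut the real dimension unless one first passes to irreducible components or a stratification of bounded complexity, and for a reducible variety the entire lower-dimensional component can sit inside the naive singular locus. Finally, the B\'ezout count ``$\#(Z\cap L)\leq E^{d}$ for generic $(n-d)$-planes $L$'' is not the right exponent; what one actually uses is that $\deg Z$ is bounded in terms of $n$ and $E$, which is fine but should be stated correctly.

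More to the point, once you invoke a Gromov/Yomdin-type algebraic decomposition --- which you do at the end to rescue the tubular-neighborhood step --- the Cauchy--Crofton bound on $\mathcal{H}^{d}$ and the whole induction on $Z_{\mathrm{sing}}$ become superfluous. The cleanest proof of \eqref{volumeBoundXcapB} is a direct application of Theorem \ref{gromovAlgLemma} (already in the paper): rescale so $r=1$; the compact semi-algebraic set $Z\cap\overline{B}$ has dimension at most $d$ and complexity $O_{n,E}(1)$, so Gromov's lemma covers it by $N\lesssim_{n,E}1$ maps $\phi_i\colon[0,1]^d\to\RR^n$ with $\|\phi_i\|_1\leq 1$. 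Covering $[0,1]^d$ by $\lesssim\rho^{-d}$ cubes of side $\rho$ and noting that each image of such a cube has $\rho$-neighborhood of volume $\lesssim\rho^n$ gives $|N_\rho(\phi_i([0,1]^d))|\lesssim\rho^{n-d}$, hence $|N_\rho(Z\cap B)|\lesssim_{n,E}\rho^{n-d}$; undoing the rescaling gives $\rho^{n-d}r^d$, and combining with the trivial bound for $\rho>r$ recovers the sum form up to the precise power of $E$. That said, your Crofton/B\'ezout route is in fact closer in spirit to Wongkew's original argument than the Gromov one is --- Wongkew works with integral-geometric invariants directly --- so the approach is not wrong, just incomplete as written and more roundabout than necessary given the tools this paper already has on hand.
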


The following theorem of Milnor and Thom \cite{Mil} controls the number of (Euclidean) connected components of a real algebraic variety. While this theorem has seen numerous refinement and improvements, the original version is sufficient for our purposes.
\begin{thm}\label{MilnorThomTheorem}
Let $Z\subset\RR^n$ be a real algebraic variety of dimension $d$ whose defining polynomials have degree at most $E$. Then $Z$ has at most $E(2E-1)^{n-1}$ connected components. 
\end{thm}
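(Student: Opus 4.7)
The plan is to follow Milnor's Morse-theoretic argument: realize the connected components of $Z$ as contributions of critical points of a generic linear function on a smooth hypersurface approximating $Z$, then bound these critical points by a Bezout-type count. First I would reduce to a single defining polynomial: given $Z = \{f_1 = \cdots = f_k = 0\}$ with $\deg f_i \leq E$, set $P = f_1^2 + \cdots + f_k^2$, so $Z = \{P = 0\}$ and $\deg P \leq 2E$. To avoid unbounded behaviour, I would intersect with a closed ball $B_R$ of radius large enough to meet every connected component of $Z$. By Sard's theorem, for almost every sufficiently small $\eps > 0$ the level set $W_\eps := \{P = \eps\}$ is a smooth hypersurface, and for $\eps$ small enough relative to $R$ and the local geometry of $Z$, each connected component of $Z$ is enclosed by at least one compact connected component of $W_\eps$ contained in $B_R$. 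It therefore suffices to bound the number of these compact components.

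For a generic vector $a \in \RR^n$, the linear function $\ell(x) = a \cdot x$ restricted to $W_\eps$ is a Morse function, whose critical points are the real solutions of
\begin{equation*}
P(x) = \eps, \qquad \nabla P(x) \parallel a.
\end{equation*}
The parallelism condition $\nabla P \parallel a$ cuts out $n-1$ independent polynomial equations of degree $2E - 1$; combined with the equation $P = \eps$ of degree $2E$, a generic Bezout-type count bounds the number of real solutions by $2E \cdot (2E-1)^{n-1}$. On each compact connected component of $W_\eps$ the function $\ell$ attains both a minimum and a maximum, contributing at least two distinct critical points. Dividing by $2$ yields the claimed bound $E(2E-1)^{n-1}$ on the number of connected components of $Z$.

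The hardest step is the enclosure claim, namely that for generic small $\eps$ each component of $Z$ produces at least one distinct compact component of $W_\eps$. This requires a tubular-neighbourhood argument distinguishing the local sublevel sets $\{P \leq \eps\}$ around the different components of $Z$, together with a compactness argument to pick $\eps$ uniformly for all of them. A secondary subtlety lies in the Bezout step: one must verify that the critical-point system is zero-dimensional after a generic choice of $a$, and count real rather than complex solutions; both points are handled by standard transversality and real-algebraic-geometry arguments.
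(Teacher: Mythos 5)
The paper does not actually prove this statement---it is quoted from Milnor and Thom \cite{Mil}---so your proposal has to stand on its own, and its skeleton is indeed the classical Morse-theoretic argument: with $P=f_1^2+\cdots+f_k^2$ of degree $\leq 2E$, the critical-point system $P=\eps$, $\nabla P\parallel a$ has degrees $2E,2E-1,\ldots,2E-1$, Bezout gives $2E(2E-1)^{n-1}$, and dividing by two (min and max on each compact component of $W_\eps$) gives $E(2E-1)^{n-1}$. This is correct bookkeeping, and the argument works when every connected component of $Z$ is bounded. The genuine gap is the unbounded case, and it sits exactly at the step you call the ``enclosure claim,'' which is false as stated. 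If $Z$ is a hyperplane, say $Z=\{x_n=0\}$, then $W_\eps=\{x_n=\pm\sqrt{\eps}\}$ has no compact components at all, so no component of $Z$ is enclosed by a compact component of $W_\eps$. Worse, distinct unbounded components of $Z$ need not even lie in distinct components of $\{P\leq\eps\}$ for small $\eps$: for $Z=\{x(xy-1)=0\}\subset\RR^2$, the $y$-axis and the branch $\{xy=1,\ x>0\}$ are joined inside $\{P\leq\eps\}$ for every $\eps>0$ (travel along a horizontal segment at height $h\gtrsim\eps^{-1/2}$, on which $P\leq 1/(16h^2)$). Intersecting with a large ball $B_R$ does not repair this: the pieces of $W_\eps$ inside $B_R$ are manifolds with boundary, the extrema of the linear function on such a piece may occur on $\partial B_R$ rather than at critical points of $\ell|_{W_\eps}$, and if you instead also count critical points of $\ell$ restricted to $W_\eps\cap\partial B_R$, the extra sphere equation changes the Bezout count and you lose the clean bound. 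So as written your argument only bounds the number of bounded components of $Z$; handling the unbounded components requires a further, degree-respecting reduction to the bounded case (this is precisely where Milnor's paper, and standard treatments of the Ole\u{\i}nik--Petrovski\u{\i}--Thom--Milnor bound such as Basu--Pollack--Roy \cite{BPR}, do real additional work), and that idea is missing from the proposal.

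A secondary point, which you flagged but should be stated carefully: genericity of the real vector $a$ makes $\ell_a|_{W_\eps}$ Morse, but that by itself does not make the complex variety of the critical-point system zero-dimensional, which is what a naive Bezout application needs. The standard repair is to note that a real solution at which the Jacobian of the $n$ equations is invertible (e.g.\ a nondegenerate critical point on a regular level set) is an isolated point of the complex zero set, and the number of isolated complex solutions of $n$ equations is at most the product of their degrees; with that formulation of Bezout, Morse nondegeneracy suffices and this part of your argument goes through.
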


We will also need to control the number of (Euclidean) connected components of a semi-algebraic set. The following special case of a result of Basu, Pollack, and Roy \cite{BPR} is sufficient for our needs. 
\begin{thm}\label{BPRTheorem}
Let $S\subset\RR^n$ be a semi-algebraic set of complexity $E$. Then there exists a constant $C(n,E)$ depending only on $n$ and $E$ so that $S$ has at most $C(n,E)$ connected components.
\end{thm}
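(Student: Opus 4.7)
The plan is to reduce the bound for an arbitrary semi-algebraic set to the bound for a real algebraic variety provided by Theorem~\ref{MilnorThomTheorem}. The argument breaks into three steps: a union decomposition, a perturbation to a closed set whose boundary lies in a low-degree variety, and a Morse-theoretic count of connected components.

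The first step is a union reduction. By the definition of semi-algebraic, $S$ admits a presentation $S = S_1 \cup \ldots \cup S_N$ in which each $S_j$ is a basic semi-algebraic set of the form
$$
S_j = \{x\in\RR^n: P_{j,1}(x)>0,\ldots,P_{j,k_j}(x)>0,\ P_{j,k_j+1}(x)=\ldots=P_{j,k_j+\ell_j}(x)=0\},
$$
with $N$, the number of polynomials, and their degrees all controlled by the complexity $E$. Since the number of connected components of a finite union is at most the sum of the numbers of components of each piece, it suffices to bound the number of connected components of a single basic $S_j$.

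The second step is a generic perturbation. For a basic set $S_0=\{P_1>0,\ldots,P_k>0,\ P_{k+1}=\ldots=P_{k+\ell}=0\}$ and a small generic $\eps>0$, I would pass to the closed semi-algebraic set
$$
\tilde S_0^{\eps}=\{P_1\geq \eps,\ldots,P_k\geq \eps,\ P_{k+1}^2+\ldots+P_{k+\ell}^2\leq \eps^2\},
$$
which for $\eps$ small enough receives at least one connected component from each connected component of $S_0$. After intersecting with a large Euclidean ball $B(0,R)$ (which again preserves all components for $R$ sufficiently large), one obtains a compact semi-algebraic set whose topological boundary is contained in a real algebraic variety $V^\eps$ whose defining polynomials have degree bounded in terms of $E$.

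The third step is to apply Theorem~\ref{MilnorThomTheorem} to $V^\eps$ together with a Morse-theoretic count: for a generic linear functional on $\RR^n$, each connected component of $\tilde S_0^\eps\cap B(0,R)$ must contribute a critical point of the restriction of this functional to $V^\eps$ (or to a smooth approximation thereof). The total number of such critical points is bounded, via B\'ezout, by a constant depending only on $n$ and $\deg V^\eps$, hence only on $n$ and $E$. The main obstacle will be executing this perturbation-and-Morse argument carefully, so that the final bound depends only on $n$ and $E$ and not on the auxiliary parameters $\eps$ and $R$, and so that no connected components are lost or merged in passing from $S_0$ to $\tilde S_0^\eps \cap B(0,R)$. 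This is a standard but intricate exercise in real algebraic geometry, carried out in full detail in \cite{BPR}.
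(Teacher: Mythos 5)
A preliminary remark on the comparison: the paper does not prove Theorem \ref{BPRTheorem} at all --- it is quoted verbatim as a special case of a result of Basu, Pollack, and Roy \cite{BPR} --- so there is no internal argument to measure yours against. Your sketch is the classical Oleinik--Petrovsky--Thom--Milnor critical-point method that underlies the cited result: the union reduction is correct (every component of a finite union contains a component of one of the pieces), and the final count via critical points of a generic linear functional on a bounded-degree variety, combined with B\'ezout or with Theorem \ref{MilnorThomTheorem}, is the right engine.

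However, the perturbation step as you have written it contains a genuine gap: thickening the equality constraints can merge components of $S_0$ for \emph{every} $\eps>0$ if the truncation to a ball is not performed first. Concretely, in $\RR^2$ take $k=0$, $\ell=1$, and $P_1=y(xy-1)$, so that $S_0=\{P_1=0\}$ has three components (the line $y=0$ and the two hyperbola branches). For any $\eps>0$, along the vertical segment $\{(x,t)\colon 0\leq t\leq 1/x\}$ with $x\geq 1/\eps$ one has $|P_1|=t\,|xt-1|\leq t\leq\eps$, so $\{P_1^2\leq\eps^2\}$ connects the line to the branch in $x>0$; hence $\#\operatorname{CC}(\tilde S_0^\eps)<\#\operatorname{CC}(S_0)$ for all $\eps>0$ and the inequality your reduction needs fails. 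The standard repair reverses the order of the auxiliary choices: fix finitely many witness points in pairwise distinct components of $S_0$, intersect with $\bar B(0,R)$ containing them \emph{first}, and only then choose $\eps$ small (depending on $R$ and the witness points, and generic so that the relevant level sets are smooth); a decreasing family of compact connected sets argument shows the witness points remain in distinct components of $\tilde S_0^\eps\cap\bar B(0,R)$, the sphere's defining polynomial is adjoined to the boundary variety before counting critical points, and the resulting bound depends only on $n$ and the degrees, hence is uniform in $\eps$, $R$, and the number of witness points. With that ordering corrected your outline is the standard proof; alternatively, simply invoking \cite{BPR}, as the paper does, is entirely adequate here.
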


One of the deepest results about semi-algebraic sets is the Yomdin-Gromov algebraic lemma. The version stated here is Theorem 1 from \cite{Burguet}.
\begin{thm}\label{gromovAlgLemma}
Let $m,d,r$ and $E$ be integers. Let $S\subset [0,1]^m$ be a compact semi-algebraic set of dimension $d$ and complexity at most $E$. Then there exists an integer $N=N(m,E,r)$ and continuous maps $\phi_1,\ldots,\phi_N\colon [0,1]^d\to[0,1]^m$ so that 
$$
S = \bigcup_{i=1}^N \phi_i([0,1]^d),
$$
and for each index $i$ we have
$$
\norm{\phi_i}_r = \max_{\beta\colon |\beta|\leq r}\Vert \partial^\beta \phi_i\Vert_{L^\infty((0,1)^d)}\leq 1.
$$
\end{thm}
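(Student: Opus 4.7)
The plan is to prove the parametrization theorem by a double induction: outermost on the dimension $d$ of $S$, and innermost on an auxiliary complexity parameter that tracks the algebraic data. The base case $d=0$ is immediate: by Theorem \ref{BPRTheorem} a compact zero-dimensional semi-algebraic set of complexity $E$ consists of $O_E(1)$ points, each trivially parametrized by a constant map from $[0,1]^0$.

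For the inductive step in $d$, I would invoke a cylindrical algebraic decomposition to write $S$ as a union of $O_{m,E}(1)$ semi-algebraic cells of controlled complexity, each of which is either of dimension strictly less than $d$ (handled by the inductive hypothesis) or is the graph of a continuous semi-algebraic function $f\colon C \to [0,1]^{m-d}$ over an open cell $C\subset[0,1]^d$. Given parametrizations $\psi_j\colon[0,1]^d\to \overline{C}$ satisfying $\norm{\psi_j}_r\le 1$ and $\norm{f\circ\psi_j}_r\le 1$, the composed maps $x\mapsto(\psi_j(x),f(\psi_j(x)))$ yield the required parametrization of the graph (after dividing through by a constant depending only on $m$ to bring all sup norms below $1$). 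Thus everything reduces to simultaneously parametrizing the cell $C$ and taming $f$ on $C$.

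The core of the argument is therefore a one-variable reparametrization lemma: for any semi-algebraic function $f\colon(0,1)\to[0,1]$ of complexity at most $E$, there exist $N(E,r)$ affine maps $\psi_1,\ldots,\psi_N\colon[0,1]\to(0,1)$ whose images cover $(0,1)$ and for which $\norm{f\circ\psi_j}_r\le 1$. One first applies Theorem \ref{MilnorThomTheorem} to the graphs of the derivatives $f',f'',\ldots,f^{(r)}$ (each semi-algebraic of complexity $O_{E,r}(1)$) to partition $(0,1)$ into $O_{E,r}(1)$ subintervals on which $f$ is $C^r$ and every $f^{(k)}$ is of constant sign and monotonic. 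On such an interval of length $L$, an affine reparametrization $\psi(t)=a+Lt$ scales the $k$-th derivative by $L^k$, so one must subdivide further so that $L^k|f^{(k)}|\le 1$ for all $k\le r$; by monotonicity these extrema are attained at endpoints, and a subsidiary induction on $r$ (using that boundedness of $f$ together with monotonicity of $f',\ldots,f^{(r)}$ propagates control from higher derivatives to lower ones via integration) shows that the number of subdivisions needed can be bounded purely in terms of $E$ and $r$. The multi-variable extension is then obtained by iteratively applying this one-variable lemma coordinate-by-coordinate over the cell structure, treating the remaining variables as parameters and arguing that semi-algebraicity of bounded complexity is preserved at each step.

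The main obstacle will be the uniform bound on the number of subdivisions in the one-variable lemma: one must exhibit an integer-valued algebraic complexity of $f$ (for instance, involving the degree of a minimal polynomial annihilating the tuple $(f,f',\ldots,f^{(r)})$ together with the number of sign changes of $f^{(r)}$) that strictly decreases under each subdivision step and is bounded above purely in terms of $E$ and $r$, so that the recursion terminates after $N(E,r)$ steps. This combinatorial bookkeeping, which is the essence of the Yomdin--Gromov argument, is where the genuinely algebraic content of the theorem is used; once it is established in one variable, the passage to general $d$ via cell decomposition is comparatively routine.
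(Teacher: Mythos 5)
There is no internal proof to compare against: the paper quotes Theorem \ref{gromovAlgLemma} as Theorem 1 of Burguet's article and does not reprove it. Judged on its own terms, your outline has the right scaffolding (cell decomposition, reduction to a one-variable reparametrization lemma, induction on $d$ and on $r$, Milnor--Thom/Basu--Pollack--Roy to count pieces), but the one-variable lemma at the core of your argument is false as stated: affine reparametrizations do not suffice, and no amount of combinatorial bookkeeping can make them suffice. Take $f(x)=\sqrt{x}$ on $(0,1)$, semi-algebraic of bounded complexity. For an increasing affine map $\psi(t)=a+Lt$ with image in $[0,1]$ one has $\norm{(f\circ\psi)'}_{L^\infty((0,1))}=L/(2\sqrt{a})$, so the requirement $\norm{f\circ\psi}_{1}\leq 1$ forces $L\leq 2\sqrt{a}$; in particular every admissible affine piece has left endpoint $a>0$, and finitely many such intervals can never cover a neighbourhood of $0$ (the point $0$ itself lies in the compact set to be covered). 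The obstruction is not the counting of subdivisions but the restriction to affine substitutions: the maps $\phi_i$ in the theorem are genuinely nonlinear, and for $\sqrt{x}$ the correct reparametrization is the quadratic substitution $\phi(t)=t^2$, which gives $f\circ\phi(t)=t$.

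Relatedly, your proposed propagation of control ``from higher derivatives to lower ones via integration'' runs the wrong way for the uniform statement you need. Boundedness of $f$ together with monotonicity of its derivatives yields only interior bounds of the shape $|f'(x)|\lesssim 1/\operatorname{dist}(x,\partial I)$, not bounds up to the boundary, and that boundary blow-up is exactly what the nonlinear substitutions are designed to absorb. The known proofs (Gromov, Yomdin, Burguet, and the reparametrization step in Pila--Wilkie) prove by induction on $r$ that a piece already reparametrized with derivatives bounded up to order $r$ can be cut into $O_{E,r}(1)$ further pieces and precomposed with power maps such as $t\mapsto t^2$ so that the derivative of order $r+1$ also becomes bounded; the number of pieces is controlled by Bezout/Milnor--Thom-type bounds on the monotonicity intervals of the derivatives, roughly as you envisage. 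So your cell-decomposition reduction and your use of Theorem \ref{MilnorThomTheorem} are reasonable, but the affine one-variable lemma must be replaced by this nonlinear reparametrization step, the induction on $r$ must be rebuilt around it, and the final composition step for graphs needs quantitative control of $\norm{\cdot}_r$ under composition (a Fa\`a di Bruno-type estimate), not merely division by a constant.
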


The final result we will need is a lemma that allows us to select one representative from each fiber of a projection map between semi-algebraic sets. A proof this lemma can be found in \cite[Lemma 2.6]{Z} (cf. \cite[Lemma 2.2]{KR}).
\begin{lem}[Selecting one point from each fiber]\label{selectingPointFiberLem}
Let $m,n$ and $E$ be integers. Then there is a constant $C(m,n,E)>0$ so that the following holds. Let $S\subset [0,1]^m$ be a semi-algebraic set of complexity at most $E$ and let $f\colon S\to\RR^n$ be a function whose graph is semi-algebraic of complexity at most $E$. Then there exists a semi-algebraic set $U\subset S$ of complexity at most $C(m,n,E)$ so that $f(U) = f(S)$, and the restriction of $f$ to $U$ is an injection. 
\end{lem}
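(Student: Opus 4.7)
The plan is to construct $U$ by choosing, in a controlled semi-algebraic way, one representative from each fiber $f^{-1}(y)$. The naive attempt---declare $U$ to be the set of $x\in S$ that are the lexicographic minimum of $f^{-1}(f(x))$---fails because fibers of a semi-algebraic map need not be closed, so lex-infima need not be attained. I will work instead with the graph $\Gamma=\{(x,f(x)):x\in S\}\subset [0,1]^m\times\RR^n$, which is semi-algebraic of complexity bounded in terms of $E$, together with its projection $\pi:\Gamma\to\RR^n$ onto the target coordinates.

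The first step is to apply a cylindrical algebraic decomposition to $\Gamma$, with variable ordering placing the target coordinates $y_1,\ldots,y_n$ outermost. This yields a decomposition $\Gamma=\bigsqcup_{j=1}^{N}\Gamma_j$ with $N=N(m,n,E)$, in which each cell $\Gamma_j$ projects onto a cell $B_j$ of a cylindrical decomposition of $f(S)$, and $\pi|_{\Gamma_j}:\Gamma_j\to B_j$ is described as an iterated ``stack'' of continuous semi-algebraic graphs in the source coordinates. This structure is effective: the number of cells and the defining polynomials have complexity bounded by a function of $m,n,E$, via the effective quantifier elimination of Basu--Pollack--Roy (or, equivalently, via Hardt's semi-algebraic trivialization with effective complexity).

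The second step reads off, from each cylinder $\Gamma_j\to B_j$, a continuous semi-algebraic section $\sigma_j:B_j\to\Gamma_j$. For a zero-dimensional fiber the section is the unique graph; for a higher-dimensional cell, one recursively takes the midpoint of the innermost open stack (or the graph of one of its bounding branches, when the stack is unbounded but pinned down by the $[0,1]^m$ constraint). Each $\sigma_j$ has complexity bounded by $C(m,n,E)$ and lies inside $\Gamma$ by construction, sidestepping the closure issue. Since the $B_j$ form a disjoint partition of $f(S)$, the union $\bigsqcup_j\sigma_j(B_j)$ projects bijectively onto $f(S)$ under $\pi$. Defining $U\subset S$ to be the image of this union under the source projection $[0,1]^m\times\RR^n\to[0,1]^m$ then yields $f(U)=f(S)$, injectivity of $f|_U$, and the desired complexity bound.

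The main obstacle is the one already flagged: ensuring the chosen representative actually lies in the (possibly non-closed) fiber rather than merely in its closure. The cylindrical structure resolves this automatically, since each cell is itself a semi-algebraic stack in which a canonical interior point can be exhibited without taking any infimum or supremum that might escape the cell. The only nontrivial input is therefore an effective version of CAD (or of Hardt trivialization); given that, the selection and the complexity book-keeping are essentially formal.
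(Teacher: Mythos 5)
The paper does not actually prove this lemma in-text: it only cites \cite[Lemma~2.6]{Z} (cf.\ \cite[Lemma~2.2]{KR}), and those references establish this semi-algebraic ``definable choice'' statement by essentially the route you describe, so you have reconstructed the standard argument. Your cylindrical-decomposition proof is correct in outline: decompose the graph $\Gamma$ cylindrically with the target variables $y$ as the base, observe that $f(S)=\pi(\Gamma)$ is a union of base cells of the induced decomposition of $\RR^n$, over each base cell read off a canonical semi-algebraic section into $\Gamma$ (the unique value on a section cell, a recursively defined midpoint on a sector cell), and then push down to $S$ via the coordinate projection, which is a bijection on $\Gamma$ because $\Gamma$ is a graph. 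One step needs to be stated more carefully: distinct cells $\Gamma_j,\Gamma_{j'}$ of the decomposition of $\Gamma$ can project onto the \emph{same} base cell, so ``the $B_j$ form a disjoint partition of $f(S)$'' is not automatic, and the union $\bigsqcup_j\sigma_j(B_j)$ need not project injectively; you must first choose, for each base cell of $f(S)$, a single cell of $\Gamma$ lying over it, and build a section only for that one. With that fix, injectivity, $f(U)=f(S)$, and the complexity bound (from the effectivity of CAD together with Tarski--Seidenberg with bounds, to control the complexity of the pushed-down set) are routine. Two smaller quibbles: effective CAD with explicit complexity bounds is due to Collins, not Basu--Pollack--Roy, whose effective quantifier elimination is a related but distinct tool, and Hardt's semi-algebraic trivialization is yet another statement and is not what one would normally invoke here; and the ``unbounded stack'' caveat is unnecessary, since every CAD cell contained in $\Gamma\subset[0,1]^m\times\RR^n$ is automatically bounded in the source coordinates, so midpoints always exist.
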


\subsection{Extending tubes inside semi-algebraic sets}
In this section, we will show that the set of tubes contained inside a semi-algebraic set cannot ``expand'' too much if we extend the tubes. If $T$ is a $\lambda\times\delta$ tube and if $A\geq 1$, we define $\operatorname{Ext}_A(T)$ to be the $A\lambda\times\delta$ tube that has the same midpoint and coaxial line as $T$. 

Recall that Besicovitch \cite{Bes} constructed a set $K\subset [0,2]^2$ of measure $\leq c(\delta)$ that contains a $1\times \delta$ tube pointing in every $\delta$-separated direction. The function $c(\delta)\to 0$ as $\delta\to 0.$ However, if each of these tubes are replaced by $\operatorname{Ext}_3(T)$, then the union of these extended tubes has volume $\sim 1$. The next lemma says that this type of phenomena is not possible if the set $K$ is semi-algebraic of bounded complexity.

\begin{lem}[Extending tubes inside semi-algebraic sets]\label{extSemiAlgSetsLem}
Let $n$ and $E$ be integers with $n\geq 2$, and let $\eps>0$. Then there is a constant $C(n,E,\eps)>0$ so that the following holds. Let $S\subset [0,1]^n$ be a semi-algebraic set of complexity at most $E$. Let $0<\delta\leq\lambda\leq 1$ and let $1\leq A\leq \lambda^{-1}$. Then 
\begin{equation}\label{volumeBoundDilS}
\Big|\bigcup_{\substack{T\ \textrm{a}\ \lambda\times\delta\ \textrm{tube}\\ T\subset S}}\operatorname{Ext}_A(T)\Big|\leq C(n,E,\eps)\delta^{-\eps}A^n |S|.
\end{equation}
\end{lem}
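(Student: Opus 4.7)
The plan is to parameterize the family of $\lambda\times\delta$ tubes contained in $S$ as a semi-algebraic set of bounded complexity, apply the Yomdin--Gromov algebraic lemma (Theorem~\ref{gromovAlgLemma}) to decompose this family into a bounded number of smoothly parameterized pieces, and then control the volume of the extended union of each piece.

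Concretely, define
$$\mathcal{T}_S = \{(p, v) \in \RR^n \times S^{n-1} : T_{p,v,\lambda,\delta} \subset S\},$$
where $T_{p,v,\lambda,\delta}$ denotes the $\lambda\times\delta$ tube with midpoint $p$ and direction $v$. The defining condition is a universal quantifier over a semi-algebraic set, so by Tarski--Seidenberg $\mathcal{T}_S$ is semi-algebraic of complexity $E_1 = E_1(n, E)$. After applying Lemma~\ref{selectingPointFiberLem} to pass to a subset with controlled multiplicity (so that distinct parameters correspond to essentially distinct tubes), apply Theorem~\ref{gromovAlgLemma} with smoothness parameter $r = r(\eps)$ to obtain $N = N(n, E, \eps)$ smooth maps $\phi_i = (p_i, v_i) \colon [0,1]^{d_i} \to \mathcal{T}_S$ with $\|\phi_i\|_r \leq 1$ and $d_i \leq \dim \mathcal{T}_S \leq 2n-1$.

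For each piece, set $U_i = \bigcup_s T_{\phi_i(s),\lambda,\delta} \subset S$ and $\tilde U_i = \bigcup_s \operatorname{Ext}_A(T_{\phi_i(s),\lambda,\delta})$, so that the full extended union is contained in $\bigcup_i \tilde U_i$ and it suffices to prove a piecewise estimate of the shape $|\tilde U_i| \leq C(n,\eps) A^n \delta^{-\eps} |U_i|$. Both $\tilde U_i$ and $U_i$ are parameterized by the common smooth map $\Psi_i(s, t, w) = p_i(s) + t v_i(s) + w$ defined on $[0,1]^{d_i} \times [-\alpha\lambda/2, \alpha\lambda/2] \times (v_i(s)^\perp \cap B_\delta(0))$, with $\alpha = A$ giving $\tilde U_i$ and $\alpha = 1$ giving $U_i$. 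The $C^r$ bound on $\phi_i$ controls the pointwise $n$-dimensional Jacobian of $\Psi_i$, and the area and coarea formulas are used to compare the two volumes.

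The main obstacle is extracting both the sharp $A^n$ and the acceptable $\delta^{-\eps}$ loss from this Jacobian analysis. A single tube gives only a factor $A$, while the ball example $S = B(0,r)$---whose tubes span every direction---forces the full $A^n$; the piecewise estimate must interpolate correctly between these extremes as the ``effective'' rank of $Dv_i$ varies. A single Gromov decomposition produces only an $O_{n,E,r}(1)$ constant and not any $\delta$-loss, so the $\delta^{-\eps}$ almost certainly enters through iterating the construction at successively finer scales $\delta^{1/r}, \delta^{2/r}, \ldots$ with $r = r(\eps)$ large, each iteration contributing at most $\delta^{-\eps/r}$. Summing the piecewise estimates over $i = 1, \ldots, N$ and using $|U_i| \leq |S|$ then yields the bound~\eqref{volumeBoundDilS}.
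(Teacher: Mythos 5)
Your proposal identifies the right set of tools---semi-algebraic parameterization of the tube family, the Yomdin--Gromov algebraic lemma, a change-of-variables/coarea comparison of volumes, and $\mathrm{Lemma}~\ref{selectingPointFiberLem}$ to control multiplicity---and this is indeed the framework the paper uses. But you explicitly flag the central step as unresolved (``the main obstacle is extracting both the sharp $A^n$ and the acceptable $\delta^{-\eps}$ loss from this Jacobian analysis'') and offer only a description of the tension (single tube gives $A$, the ball gives $A^n$, the ``effective rank of $Dv_i$'' must interpolate) rather than a mechanism. That is a genuine gap, because the $C^r$ bound from Gromov alone only controls the Jacobian pointwise; it does not by itself say anything about how the Jacobian grows as one slides along the tube, which is what distinguishes $A^1$ from $A^n$.

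The paper's missing mechanism is as follows. First, it does not apply Gromov to the full $(2n{-}1)$-dimensional tube family; it slices: fix the longitudinal coordinate $t$, parameterize lines by $(a,d)$ (intersection with $H_0$ and slope), use $\mathrm{Lemma}~\ref{selectingPointFiberLem}$ to cut $\mathcal{L}$ down to an $(n{-}1)$-dimensional set $\mathcal{L}'$, and apply Gromov with $m=d=n-1$. Second, because Gromov only gives $C^r$ maps (not polynomial ones), the paper pigeonholes to a parameter ball $U$ of radius $\delta^{\eps/(n-1)}$ --- there are $\lesssim\delta^{-\eps}$ such balls, and \emph{this single pigeonholing step}, not an iteration over scales $\delta^{1/r},\delta^{2/r},\ldots$, is where the $\delta^{-\eps}$ loss enters --- and Taylor-approximates the Gromov maps by degree-$(r-1)$ polynomials $(F,G)$ accurate to error $\delta^{2n-2}$. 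Third, and this is the crucial point: for each fixed cross-sectional parameter $x$, the Jacobian determinant $P_x(s) = \det(DF(x) + sDG(x))$ is a \emph{polynomial of degree at most $n-1$ in the longitudinal variable $s$}. The elementary $L^\infty$--$L^1$ inequality for univariate polynomials on nested intervals then gives
$$
|P_x(t)| \lesssim \Big(\frac{A\lambda}{\lambda/8}\Big)^{n-1}\,\lambda^{-1}\int_0^{\lambda/8}|P_x(s)|\,ds,
$$
producing the factor $A^{n-1}$ per slice; integrating over $t\in[-2A\lambda,2A\lambda]$ supplies the last $A$. Milnor--Thom (not $\mathrm{Lemma}~\ref{selectingPointFiberLem}$) is then used to bound the multiplicity of $F+sG$ so that $\int|\det(DF+sDG)|$ can be compared to $|(F+sG)(U)|$ in both directions. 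Without recognizing that the determinant is a bounded-degree polynomial in $s$, you have no route from the $C^r$ bound to the sharp $A^n$; and your proposed ``iterate at finer scales'' source of the $\delta^{-\eps}$ does not match how the loss actually arises.
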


Lemma \ref{extSemiAlgSetsLem} will be proved by combining the Yomdin-Gromov algebraic lemma with the following elementary estimate, which says that if a univariate polynomial is small (on average) on an interval, then it cannot grow too quickly outside that interval. We will apply this lemma to a polynomial that measures the ``compression'' of tubes inside a semi-algebraic set---if the tubes are very compressed inside the set, then they must remain at least somewhat compressed when they are extended beyond the set. 
\begin{lem}
Let $P(x)$ be a polynomial of degree $\leq D$ and let $J\subset I\subset\RR$ be closed intervals. Then
\begin{equation}\label{supBoundedByIntegral}
\norm{P}_{L^\infty(I)} \leq C(D)  \Big(\frac{|I|}{|J|}\Big)^D \norm{P}_{\L{}^1(J)}.
\end{equation}
\end{lem}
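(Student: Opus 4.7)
The plan is to prove this Remez-type polynomial inequality by reducing to a normalized model configuration and then combining two classical facts: the equivalence of $L^p$ norms on the finite-dimensional space $\mathcal{P}_D$ of polynomials of degree at most $D$ on a fixed compact interval, and a Chebyshev-type extrapolation from a subinterval to a larger interval.

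First I would use an affine change of variables to normalize so that $J = [0,1]$ and $I \supset J$ is an interval of length $L := |I|/|J|$; the polynomial degree is preserved, and both norms rescale by tractable powers of $|J|$ which can be tracked at the end. Next, on the compact interval $J = [0,1]$ all $L^p$ norms restricted to the $(D+1)$-dimensional space $\mathcal{P}_D$ are equivalent, so
$$
\|P\|_{L^\infty(J)} \leq C_1(D)\, \|P\|_{L^1(J)}.
$$

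The heart of the argument is the extrapolation estimate
$$
\|P\|_{L^\infty(I)} \leq C_2(D)\, L^D\, \|P\|_{L^\infty(J)}.
$$
One route is to affinely map $J$ to $[-1,1]$ and invoke the extremal property of the Chebyshev polynomial $T_D$: any $P \in \mathcal{P}_D$ with $\|P\|_{L^\infty([-1,1])} \leq 1$ satisfies $|P(y)| \leq |T_D(y)| \leq (|y| + \sqrt{y^2-1})^D$ for $|y|\geq 1$, which is $\lesssim_D L^D$ on the image of $I$. A more hands-on alternative is Lagrange interpolation at $D+1$ nodes $x_0,\ldots,x_D \in J$: writing $P = \sum_i P(x_i)\ell_i$, each Lagrange basis polynomial $\ell_i \in \mathcal{P}_D$ satisfies $\|\ell_i\|_{L^\infty(I)} \lesssim_D L^D$ by direct inspection of its factored form, and the bound follows by taking absolute values and summing. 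Combining the extrapolation with the norm-equivalence step and undoing the normalization yields the claimed inequality; the extrapolation is the main obstacle and is the source of the factor $(|I|/|J|)^D$, and is sharp up to constants (as witnessed by the Chebyshev polynomials themselves), while all remaining steps contribute only $D$-dependent constants.
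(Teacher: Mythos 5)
Your proof is correct, but it is organized differently from the paper's. You decompose the estimate into two standard steps: first an $L^1\to L^\infty$ norm comparison on the normalized interval $J=[0,1]$ (via equivalence of norms on the finite-dimensional space $\mathcal{P}_D$), and then an $L^\infty(J)\to L^\infty(I)$ extrapolation step (via the Chebyshev extremal property or Lagrange interpolation at arbitrary well-separated nodes in $J$). The paper instead does everything in a single Lagrange-interpolation pass: it first applies the Chebyshev/Markov measure inequality to find a set $J'\subset J$ with $|J'|\geq |J|/2$ on which $|P|\leq 2\norm{P}_{L^1(J)}$ (with the averaged $L^1$ norm, which is also what your normalization implicitly uses and what the later application requires), pigeonholes to extract $D+1$ points $x_1,\ldots,x_{D+1}\in J'$ with pairwise separation $\gtrsim |J|/D$, and interpolates at exactly those points, so that the values $|P(x_i)|$ are already controlled by the $L^1$ norm and no separate sup-norm comparison on $J$ is needed. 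Your version is more modular and invokes two classical black boxes, at the cost that the norm-equivalence step is, in its usual formulation, a compactness argument and gives no explicit constant $C_1(D)$; the paper's version is self-contained and produces an explicit polynomial bound in $D$. Your alternative route through Chebyshev extremality for the extrapolation step is genuinely different from anything in the paper and gives the sharp growth rate, which is a nice observation even though neither approach needs sharpness. One small imprecision: when you say "Lagrange interpolation at $D+1$ nodes $x_0,\ldots,x_D\in J$" and bound $\|\ell_i\|_{L^\infty(I)}\lesssim_D L^D$ "by direct inspection," this requires the nodes to be chosen with separation comparable to $|J|/D$ (e.g.\ equally spaced or Chebyshev nodes); for arbitrary nodes the constant degenerates. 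That is easy to repair and does not affect the correctness of the argument.
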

\begin{proof}
Replacing $P(x)$ by $P(x-x_0)$ if necessary, we may assume that $J$ is centered at $0$. Let $J^\prime\subset J$ be a measurable set with $|J^\prime|\geq\frac{1}{2}|J|$ so that $|P(x)|\leq 2\norm{P}_{\L{}^1(J)}$ for all $x\in J^\prime$. Cover $J$ by intervals of length $\frac{1}{4(D+1)}$. Observe that at least $2(D+1)$ of these intervals must intersect $J^\prime$. Numbering these intervals from left to right and selecting one point from each interval with odd index, we conclude that there exist points $x_1,\ldots,x_{D+1}\in J^\prime$ so that $|x_i-x_j| \geq |J|/(4D+4)$ whenever $i\neq j$.

By Lagrange interpolation we can write
\begin{equation}\label{lagrangePoly}
P(x) = \sum_{j=1}^{D+1}P_j(x),\quad\textrm{where}\ P_j(x)= P(x_j)\prod_{\substack{k=1\\k\neq j}}^{D+1} \frac{x-x_k}{x_j-x_k}.
\end{equation}

For each index $j$, we have
$$
\Big|P(x_j)\prod_{\substack{k=1\\k\neq j}}^{D+1} \frac{x-x_k}{x_j-x_k}\Big|\leq \Big(2\norm{P}_{\L{}^1(J)}\Big)\Big(\frac{4D+4}{|J|}\Big)^D \big(|x|+|J|\Big)^D\lesssim_D \norm{P}_{\L{}^1(J)}\Big(1+\Big(\frac{|x|}{|J|}\Big)^D\Big),
$$
and thus
\begin{equation}\label{PxBound}
|P(x)|\lesssim_D \norm{P}_{\L{}^1(J)} \Big(1+\Big(\frac{|x|}{|J|}\Big)^D\Big).
\end{equation}
Since $J$ is centered at $0$ and $J\subset I$, we have that $|x|\leq |I|$ for all $x\in I$, and \eqref{supBoundedByIntegral} now follows from \eqref{PxBound}.
\end{proof}

We are now ready to prove Lemma \ref{extSemiAlgSetsLem}.

\begin{proof}[Proof of Lemma \ref{extSemiAlgSetsLem}]
Our proof will use many of the ideas developed by Katz and Rogers in \cite{KR}, and parts of the proof will closely mirror their arguments. We will begin with a few reductions.
\medskip

\noindent{\bf Reduction 1: $S$ has small diameter}\\
Suppose for the moment that there is a constant $C_1(n,E,\eps)>0$ so that for all $0<\delta\leq\lambda\leq A^{-1}\leq 1$, all semi-algebraic sets $S\subset [0,1/2]^n$ of diameter at most $\frac{3}{2}\lambda$ and complexity at most $E$, and all $\eps>0$, 
\begin{equation}\label{volumeBoundDilSSmallDiam}
\Big|\bigcup_{\substack{T\ \textrm{a}\ \lambda\times\delta\ \textrm{tube}\\T\subset S}}\operatorname{Ext}_A(T)\Big|\leq C_1(n,E,\eps)\delta^{-\eps}A^n |S|.
\end{equation}
With this assumption, let $0<\delta\leq\lambda\leq 1$, let $\eps>0$, and let $S\subset [0,1]^n$ be a semi-algebraic set of complexity at most $E$. We will show that there exists a constant $C(n,E,\eps)$ so that \eqref{volumeBoundDilS} holds. 

Let $\mathcal{B}$ be a set of balls of diameter $\frac{3}{2}\lambda$ with the property that each point in $[0,1]^n$ is contained in $O_n(1)$ balls from $\mathcal{B}$, and each ball of diameter $\frac{4}{3}\lambda$ is entirely contained in one of the balls from $\mathcal{B}$. Then since each $\lambda\times\delta$ tube is contained in a ball of diameter $\frac{4}{3}\lambda$, we have
$$
\{T\ \textrm{a}\ \lambda\times\delta\ \textrm{tube}, T\subset S\} =\bigcup_{B\in\mathcal{B}} \{T\ \textrm{a}\ \lambda\times\delta\ \textrm{tube}, T\subset S\cap B\}.
$$
Note that $S\cap B$ is also semi-algebraic, and the complexity of $S\cap B$ is bounded by a number that depends only on $n$ and the complexity of $S$. Applying \eqref{volumeBoundDilSSmallDiam}, we have
 \begin{equation}
 \begin{split}
 \Big|\bigcup_{\substack{T\ \textrm{a}\ \lambda\times\delta\ \textrm{tube}\\ T\subset S}}\operatorname{Ext}_A(T)\Big|
 &= \Big|\bigcup_{B\in\mathcal{B}} \bigcup_{\substack{T\ \textrm{a}\ \lambda\times\delta\ \textrm{tube}\\ T\subset S\cap B}}\operatorname{Ext}_A(T)\Big|\\
 &\leq \sum_{B\in\mathcal{B}}C_1 \delta^{-\eps}A^n|S\cap B|\\
 &\lesssim C_1(n,E,\eps) \delta^{-\eps}A^n|S|.
 \end{split}
 \end{equation}
 Thus if $C(n,E,\eps)$ is selected sufficiently large (depending only on $n,\eps$ and $C_1(n,E,\eps)$, which in turn depends only on $n$, $\eps$, and $E$), then \eqref{volumeBoundDilS} holds.

\medskip

\noindent{\bf Reduction 2: All tubes point in almost the same direction}\\
Suppose for the moment that there is a constant $C_2(n,E,\eps)>0$ so that for all $0<\delta\leq\lambda\leq A^{-1}\leq 1$, all semi-algebraic sets $S\subset [0,1]^n$ of diameter at most $\frac{3}{2}\lambda$ and complexity at most $E$, and all $\eps>0$,
\begin{equation}\label{volumeBoundDilSSameDiretion}
\Big|\bigcup_{\substack{T\ \textrm{a}\ \lambda\times\delta\ \textrm{tube}\\ \angle(v(T),e_n)\leq\frac{1}{10}\\T\subset S}}\operatorname{Ext}_A(T)\Big|\leq C_2(n,E,\eps)\delta^{-\eps}A^n |S|.
\end{equation}
With this assumption, let $0<\delta\leq\lambda\leq 1$, let $\eps>0$, and let $S\subset [0,1]^n$ be a semi-algebraic set of diameter at most $\frac{3}{2}\lambda$ and complexity at most $E$. We will show that there exists a constant $C_1(n,E,\eps)$ so that \eqref{volumeBoundDilSSmallDiam} holds. 

Let $\Omega\subset S^{n-1}$ be a set of $O_n(1)$ unit vectors so that each unit vector in $S^{n-1}$ makes an angle $\leq 1/10$ with a vector from $\Omega$. For each $v\in\Omega$, let $\mathcal{O}_v$ be an orthogonal transformation taking $v$ to the $n$-th basis vector $e_n$ and let $S_v = \mathcal{O}_v(S)$. Then
$$
\bigcup_{\substack{T\ \textrm{a}\ \lambda\times\delta\ \textrm{tube}\\ T\subset S}}\operatorname{Ext}_A(T)=\bigcup_{v\in \Omega}\mathcal{O}_v^{-1}\Big(\bigcup_{\substack{T\ \textrm{a}\ \lambda\times\delta\ \textrm{tube}\\ \angle(v(T),e_n)\leq\frac{1}{10}\\T\subset S_v}}\operatorname{Ext}_A(T)\Big).
$$
Applying \eqref{volumeBoundDilSSameDiretion} to each set $S_v$, we conclude that
\begin{equation}
\begin{split}
\Big|\bigcup_{\substack{T\ \textrm{a}\ \lambda\times\delta\ \textrm{tube}\\ T\subset S}}\operatorname{Ext}_A(T)\Big|
&\leq \sum_{v\in\Omega}\Big|\bigcup_{\substack{T\ \textrm{a}\ \lambda\times\delta\ \textrm{tube}\\ \angle(v(T),e_n)\leq\frac{1}{10}\\T\subset S_v}}\operatorname{Ext}_A(T)\Big|\\
&\leq \sum_{v\in\Omega} C_2(n,E,\eps)\delta^{-\eps}A^n|S_v|\\
&\lesssim C_2\delta^{-\eps}A^n|S|.
\end{split}
\end{equation}
Thus if $C_1(n,E,\eps)$ is selected sufficiently large (depending only on $n,\eps$ and $C_2(n,E,\eps)$, which in turn depends only on $n$, $\eps$, and $E$), then \eqref{volumeBoundDilSSmallDiam} holds. 

\medskip
\noindent{\bf The main argument}
Let $0<\delta\leq\lambda\leq 1$, let $\eps>0,$ and let $S\subset B(0,\frac{3}{4}\lambda)\subset [0,1]^n$ be a semi-algebraic set of complexity at most $E$. We need to show that there exists a constant $C_2(n,E,\eps)$ so that \eqref{volumeBoundDilSSameDiretion} holds.

It will be convenient to replace $S$ with the set
$$
S^\prime = \bigcup_{\substack{T\ \textrm{a}\ \lambda\times\delta\ \textrm{tube}\\ \angle(v(T),e_n)\leq\frac{1}{10}\\T\subset S}}T.
$$
Note that $|S^\prime|\leq |S|$, and 
$$
\bigcup_{\substack{T\ \textrm{a}\ \lambda\times\delta\ \textrm{tube}\\ \angle(v(T),e_n)\leq\frac{1}{10}\\T\subset S^\prime}}\operatorname{Ext}_A(T)=\bigcup_{\substack{T\ \textrm{a}\ \lambda\times\delta\ \textrm{tube}\\ \angle(v(T),e_n)\leq\frac{1}{10}\\T\subset S}}\operatorname{Ext}_A(T).
$$

Since $\lambda\geq\delta$, we have that either $S^\prime$ is empty or $|S^\prime|\gtrsim\lambda\delta^{n-1}\geq\delta^n$. If the former holds then Lemma \ref{extSemiAlgSetsLem} is trivially true, so we can assume that
\begin{equation}\label{trivialVolumeBoundSPrime}
|S^\prime|\gtrsim\delta^n.
\end{equation}

Observe that there exists a constant $c_n>0$ so that whenever $T$ is a $\lambda\times\delta$ tube and whenever $x\in T$, then $|B(x,\delta)\cap T|\geq c_n|B(x,\delta)|$. Since $S^\prime$ is a union of $\lambda\times\delta$ tubes, this implies that for all $x\in S^\prime$, $|B(x,\delta)\cap S^\prime|\geq c_n|S^\prime|$, and thus
\begin{equation}\label{coveringNumberSPrime}
\mathcal{E}_\delta(S^\prime)\leq c_n^{-1} \delta^{-n}|S^\prime|.
\end{equation}
In particular, this means that for all $\rho\geq\delta$, we have
\begin{equation}\label{neighborhoodVolumeBound}
|N_{\rho}(S^\prime)|\lesssim (\rho/\delta)^n|S^\prime|\leq (\rho/\delta)^n|S|,
\end{equation}
where the implicit constant depends only on $n$. Note that inequality \eqref{neighborhoodVolumeBound} might not be true if the left hand side was replaced by $|N_{\rho}(S)|$, which is why the set $S^\prime$ was introduced.

For each $t\in\RR$, define the ``vertical'' hyperplane 
$$
H_t = \{(x_1,\ldots,x_n)\in \RR^n\colon x_n=t\}.
$$
Since $S^\prime\subset B(0,\frac{3}{4}\lambda)$, for each $\lambda\times\delta$ tube $T$ with $T\subset S^\prime$ and $\angle(v(T),e_n)\leq\frac{1}{10}$, we have that every line segment of length $\lambda$ that is contained in $T$ and parallel to $v(T)$ intersects $H_0$ and $H_{\lambda/8}$. Since $\angle(w,e_n)\leq 1/10$ (and in particular, $w$ is not perpendicular to $e_n$), these intersection points are unique.

Define
\begin{equation}\label{defnOfCalL}
\begin{split}
\mathcal{L}=\{(a,d)\in \RR^{2n-2}\colon &\textrm{there exists a}\ \lambda\times\delta\ \textrm{tube}\ T\subset S^\prime\ \textrm{with}\ \angle(v(T),e_n)\leq 1/10,\\
&\textrm{so that}\ v(T)\ \textrm{is parallel to}\ (d,1),\ \textrm{and}\ (a,0)\in H_0\cap T\}.
\end{split}
\end{equation}
Since $S^\prime\subset [0,1]^n$, we have $\mathcal{L}\subset[0,1]^{2n-2}$. The key observations are that 
\begin{equation}\label{asdContained}
\bigcup_{(a,d)\in\mathcal{L}}\{(a,0)+s(d,1)\colon s\in[0,\lambda/8]\}\subset S^\prime,
\end{equation}
and
\begin{equation}\label{tildeSBoundsExt}
 \bigcup_{\substack{T\ \textrm{a}\ \lambda\times\delta\ \textrm{tube}\\ \angle(v(T),e_n)\leq\frac{1}{10}\\T\subset S}}\operatorname{Ext}_A(T)\ \subset\ \bigcup_{(a,d)\in\mathcal{L}}\{(a,0)+s(d,1)\colon s\in[0,2A\lambda]\}.
\end{equation}
The containment \eqref{asdContained} follows from the definition \eqref{defnOfCalL} of $\mathcal{L}$. To verify \eqref{tildeSBoundsExt}, let $x$ be a point in the left hand side of \eqref{tildeSBoundsExt}. Then there is a $\lambda\times\delta$ tube $T$ with $T\subset S,$ $\angle(v(T),e_n)\leq 1/10$, and $x\in \operatorname{Ext}_A(T)$. Let $L=\{(a,0)+s(d,1)\colon s\in\RR\}$ be the line that contains $x$ and is parallel to $v(T)$. Since $L$ intersects $\operatorname{Ext}_A(T)$ in at least one point (i.e. at the point $x$), and $L$ is parallel to $v(T)$, we have that $L\cap H_0 \subset T\cap H_0$, and thus $(a,b)\in\mathcal{L}$. This implies that $x$ is in the right hand side of \eqref{tildeSBoundsExt}.

Define
$$
\tilde S = \bigcup_{(a,d)\in\mathcal{L}}\{(a,0)+s(d,1)\colon s\in[0,2A\lambda]\}.
$$
Then
\begin{equation}\label{tildeSContainedBall}
\bigcup_{\substack{T\ \textrm{a}\ \lambda\times\delta\ \textrm{tube}\\ \angle(v(T),e_n)\leq\frac{1}{10}\\T\subset S}}\operatorname{Ext}_A(T)\ \subset\  \tilde S\ \subset\ [-2A\lambda,2A\lambda]^n.
\end{equation}
We will prove that there exists a constant $C_3(n,E,\eps)$ so that for each $t\in [-2A\lambda,\ 2A\lambda]$, 
\begin{equation}\label{boundingVolumeOfASlice}
| \tilde S \cap H_t| \leq C_3(n,E,\eps)\delta^{-\eps}\big(\lambda^{-1} A^{n-1}|N_{\delta}(S^\prime)|+\delta^n\big),
\end{equation}
where the $|\cdot|$ on the left denotes $(n-1)$ dimensional Lebesgue measure and the $|\cdot|$ on the right denotes $n$ dimensional Lebesgue measure.

Assuming that \eqref{boundingVolumeOfASlice} is true for the moment, we can integrate in $t$ to conclude that
\begin{equation}\label{boundOnTildeSSlice}
\begin{split}
|\tilde S| & = \int_{-2 A\lambda}^{2 A\lambda}| \tilde S \cap H_t|dt\\
&\leq(4 A\lambda)C_3(n,E,\eps)\delta^{-\eps}\big(\lambda^{-1} A^{n-1}|N_{\delta}(S^\prime)|+\delta^n\big)\\
& \lesssim C_3(n,E,\eps)\delta^{-\eps} A^n|N_{\delta}(S^\prime)|,
\end{split}
\end{equation}
where on the last line we used \eqref{trivialVolumeBoundSPrime} and the fact that $\lambda^{-1} A^{n-1}\geq 1$. Combining \eqref{neighborhoodVolumeBound}, \eqref{tildeSContainedBall}, and \eqref{boundOnTildeSSlice} would establish \eqref{volumeBoundDilSSameDiretion}. The remainder of the argument will be devoted to proving \eqref{boundingVolumeOfASlice}.

Fix a number $t\in [-2 A\lambda,\ 2A\lambda]$. We will prove that \eqref{boundingVolumeOfASlice} is true for this choice of $t$. First, we can assume that $\tilde S\cap H_t$ has dimension $n-1$, since otherwise $|\tilde S\cap H_t|=0$ and the result is immediate.

Consider the map $\phi\colon \mathcal{L}\to \tilde S \cap H_t$ given by $(a,d)\mapsto (a,0)+t(d,1)$. Use Lemma \ref{selectingPointFiberLem} to select a semi-algebraic set $\mathcal{L}^\prime\subset\mathcal{L}$ of dimension $n-1$ and complexity at most $C(n,E)$ so that the restriction of $\phi$ to $\mathcal{L}^\prime$ is a bijection.

Apply Theorem \ref{gromovAlgLemma} with $m=d=n-1$ and $r=2(n-1)^2/\eps$. There is a number $N\lesssim 1$ and maps 
$$
(F_i,G_i)\colon[0,1]^{n-1}\to[0,1]^{2n-2},\quad i=1,\ldots,N,
$$ 
so that
\begin{equation}\label{LPrimeCovered}
\mathcal{L}^\prime = \bigcup_{i=1}^N (F_i,G_i)([0,1]^{n-1}).
\end{equation}
and
\begin{equation}\label{derivativeBoundFromGromov}
\Vert (F_i,G_i)\Vert_r \leq 1\quad\textrm{for each index}\ i.
\end{equation}

Since
$$
\tilde S\cap H_t = \{(a,0)+t(d,1) \colon (a,d)\in\mathcal{L}\},
$$ 
\eqref{LPrimeCovered} implies that
$$
\tilde S\cap H_t =\bigcup_{i=1}^N \{(a,0)+t(d,1) \colon (a,d)\in (F_i,G_i)([0,1]^{n-1}) \}. 
$$
Thus by pigeonholing there exists an index $i_0$ so that
$$
 |\{(a,0)+t(d,1) \colon (a,d)\in (F_{i_0},G_{i_0})([0,1]^{n-1}) \}|\gtrsim |\tilde S \cap H_t|.
$$

Since $[0,1]^{n-1}$ can be covered by $\lesssim\delta^{-\eps}$ balls of radius $\delta^{\eps/(n-1)}$, there exists a point $x_0\in [0,1]^{n-1}$ so that if we define $U=[0,1]^{n-1}\cap B(x_0,\delta^{\frac{\eps}{n-1}})$, then 
$$
|\{(a,0)+t(d,1) \colon (a,d)\in (F_{i_0},G_{i_0})(U) \}|\gtrsim \delta^{\eps}|\tilde S \cap H_t|.
$$

Let $F$ (resp. $G$) be the degree $r-1$ polynomial given by the $(r-1)$-st order Taylor expansion of $F_{i_0}$ (resp. $G_{i_0}$) around $x_0$. By \eqref{derivativeBoundFromGromov}, we have
\begin{equation}\label{FGCloseToTaylorApprox}
|(F+G)(x)-(F_{i_0}+G_{i_0})(x)|\leq |x-x_0|^r\leq  \delta^{2n-2}\quad\textrm{for all}\ x\in U.
\end{equation}
This implies that 
\begin{equation}\label{ContainedInNbhdSPrime}
\{(a,0)+s(d,1) \colon (a,d)\in (F,G)(U)\} \subset N_{\delta}(S^\prime)\ \textrm{for all}\ s\in [0,\lambda/8].
\end{equation}

We claim that there exists a constant $C_3(n)$ so that
\begin{equation}\label{mostOfTildeScapHCaptured}
|\{(a,0)+t(d,1) \colon (a,d)\in (F,G)(U) \}|\gtrsim \delta^{\eps}|\tilde S \cap H_t|-C_3(n)\delta^{n}.
\end{equation}
To see this, define 
\begin{equation*}
\begin{split}
J&=\operatorname{bdry}(\{(F(x),0)+t(G(x),1))\colon x\in U\}),\\
J^\prime&=\operatorname{bdry}(\{(F_{i_0}(x),0)+t(G_{i_0}(x),1))\colon x\in U\}).
\end{split}
\end{equation*}
Observe that since the maps $x\mapsto (F(x),0)+t(G(x),1))$ and $x\mapsto (F_{i_0}(x),0)+t(G_{i_0}(x),1))$ are continuous, they map $\operatorname{bdry}(U)$ to $J$ and $J^\prime$, respectively. By \eqref{FGCloseToTaylorApprox}, we have that $J\subset N_{\delta^{2n-2}}(J^\prime)$ and $J^\prime\subset N_{\delta^{2n-2}}(J)$. For each $\rho>0$, 
$$
\mathcal{E}_\rho(\operatorname{bdry}(U))\lesssim \rho^{2-n},
$$ 
where the implicit constant depends only on $n$. By \eqref{derivativeBoundFromGromov} we have that $(F_{i_0},G_{i_0})$ is 1-Lipschitz, and thus $\mathcal{E}_{\rho}(J^\prime)\lesssim \rho^{2-n}$ for all $\rho>0$. This implies that $|N_{\delta^{2n-2}}(J)|\lesssim \delta^n$. Thus if we select $C_3(n)$ sufficiently large, we have
\begin{equation}\label{lowerBoundFGU}
\begin{split}
|\{(a,0)+t(d,1) \colon (a,d)\in (F,G)(U) \}|&  \geq |\{(a,0)+t(d,1) \colon (a,d)\in (F_{i_0},G_{i_0})(U) \}|-|N_{\delta^{2n-2}}(J)|\\
& \gtrsim \delta^{\eps}|\tilde S \cap H_t| - C_3(n)\delta^n,
\end{split}
\end{equation}
which gives us \eqref{mostOfTildeScapHCaptured}.

At this point, \eqref{mostOfTildeScapHCaptured} gives us a lower bound on the size of the slice 
$$
\{(a,0)+t(d,1) \colon (a,d)\in (F,G)(U) \},
$$
and \eqref{ContainedInNbhdSPrime} gives us an upper bound on the size of each of the slices 
$$
\{(a,0)+s(d,1) \colon (a,d)\in (F,G)(U)\},\quad s\in [0,\lambda/8].
$$
Our next task is to compare these lower and upper bounds. To do this we will need to introduce the change of variables formula from multivariate calculus. The version stated here is Theorem 9.9.3 from \cite{Kuttler} (see also \cite[Theorem 20.15]{Kuttler2} for a similar formulation).
\begin{thm}\label{changeOfVariablesSpivak}
Let $U\subset \RR^m$ be an open set, let $h\colon U\to\RR^m$ be $C^1$, and let $V = h(U)$. For each $y\in V$, define $m(y)=\{x\in U\colon h(x)=y\};$ this is defined whenever $\{x\in U\colon h(x)=y\}$ is finite. 

Then $m$ is defined almost everywhere; $m$ is measurable (with respect to $m$-dimensional Lebesgue measure); and 
$$
\int_{V}m(y)dy = \int_U |\operatorname{det}(Dh(x))|dx,
$$
where both integrals are with respect to $m$-dimensional Lebesgue measure.
\end{thm}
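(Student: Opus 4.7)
The plan is to prove the area formula by localizing $h$ to pieces where it is a diffeomorphism, invoking the classical diffeomorphism change of variables on each piece, and handling the critical set via a Sard-type argument. First I would decompose $U$ into the regular set $U_1 = \{x \in U : \det Dh(x) \neq 0\}$, which is open since $h$ is $C^1$, and the critical set $U_0 = U \setminus U_1$. On $U_1$, the inverse function theorem provides, for each $x$, an open neighborhood $W_x \subset U_1$ on which $h$ is a $C^1$ diffeomorphism onto its image. Since $U_1$ is second countable, extract a countable subcover $\{W_i\}$ and disjointify: set $E_1 = W_1$ and $E_i = W_i \setminus (W_1 \cup \cdots \cup W_{i-1})$ for $i \geq 2$. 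Then $U_1 = \bigsqcup_i E_i$ as a disjoint union of Borel sets, and $h$ is injective on each $E_i$.

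Next I would apply the diffeomorphism form of change of variables: for a $C^1$ diffeomorphism $\psi : W \to \psi(W)$ and a Borel set $E \subset W$, one has $|\psi(E)| = \int_E |\det D\psi(x)|\,dx$. The standard proof partitions $E$ into small cubes on which $\psi$ is uniformly well approximated by its affine Taylor polynomial, applies the linear change of variables $|L(Q)| = |\det L|\,|Q|$ on each cube, and passes to the limit via dominated convergence as the mesh tends to zero. Taking $\psi = h|_{W_i}$ and summing gives
\begin{equation*}
\int_{U_1} |\det Dh(x)|\,dx = \sum_i |h(E_i)| = \int_V \sum_i \chi_{h(E_i)}(y)\,dy.
\end{equation*}
Each $h(E_i)$ is Lebesgue measurable as the diffeomorphic image of a Borel set, so the integrand on the right is measurable; on $V \setminus h(U_0)$ it coincides with $m(y)$, because the $E_i$ exhaust the regular fibers of $h$, and each regular fiber is locally finite by the inverse function theorem and hence countable. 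This simultaneously establishes that $m$ is measurable once we know that $h(U_0)$ is a null set.

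It remains to show $|h(U_0)| = 0$. Cover $U_0$ by countably many relatively compact open subsets $\Omega \Subset U$ and fix $\eps > 0$. On $\Omega$, uniform continuity of $Dh$ lets me partition $\Omega$ into cubes $Q$ of side $\ell$ so small that on every $Q$ meeting $U_0$ one has $|\det Dh(x)| < \eps$ while $\|Dh(x)\| \leq M$ for a fixed $M = M(\Omega)$. A first-order Taylor estimate then shows $h(Q)$ is contained in a parallelepiped of dimensions $\lesssim M\ell$ in $m-1$ directions and $\lesssim \eps \ell$ in the remaining direction, yielding $|h(Q)| \lesssim M^{m-1}\eps\,\ell^m$. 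Summing over cubes meeting $U_0$ gives $|h(U_0 \cap \Omega)| \lesssim M^{m-1}\eps\,|\Omega|$; letting $\eps \to 0$ and invoking countable subadditivity yields $|h(U_0)| = 0$. This shows the right-hand display above equals $\int_V m(y)\,dy$, and the left-hand side equals $\int_U |\det Dh(x)|\,dx$ since the integrand vanishes on $U_0$.

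The main obstacle is the diffeomorphism change of variables step on each $W_i$: this is the analytically substantive piece, requiring careful control of the Lipschitz constants of $\psi$ and of the error in the affine approximation (equivalently, a quantitative form of the inverse function theorem that bounds how closely $\psi$ tracks its linearization on small cubes). The Sard-type bound $|h(U_0)| = 0$ is elementary here because the source and target have equal dimension, so the Jacobian itself directly controls local volume contraction. Measurability of $m$ then drops out of the countable disjoint decomposition, and combining the three pieces yields the stated identity.
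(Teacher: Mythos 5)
The paper does not actually prove Theorem \ref{changeOfVariablesSpivak}: it is quoted as a known result from Kuttler's textbook, and only its consequence, Corollary \ref{changeOfVariableIneq}, is used (in the proof of Lemma \ref{extSemiAlgSetsLem}). So there is no argument in the paper to compare against; what you have written is a self-contained proof along the classical lines of the area formula: split $U$ into the regular set $U_1$ and the critical set $U_0$, disjointify a countable cover of $U_1$ by neighborhoods on which $h$ is a diffeomorphism, apply the diffeomorphism change of variables on each Borel piece $E_i$ and sum (monotone convergence gives $\sum_i|h(E_i)|=\int\sum_i\chi_{h(E_i)}$, and injectivity of $h$ on each $E_i$ identifies $\sum_i\chi_{h(E_i)}(y)$ with $\#(h^{-1}(y)\cap U_1)$), then show $|h(U_0)|=0$. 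The structure is correct, and it more than suffices for what the paper needs, since Corollary \ref{changeOfVariableIneq} already follows from your main display together with $|h(U_0)|=0$.

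Two steps need tightening. First, in the Sard step the slab estimate does not follow from the hypothesis $|\det Dh|<\eps$ on $Q$: a pointwise bound $\|Dh\|\leq M$ and $|\det Dh|<\eps$ only forces the least singular value of $Dh$ to be at most $\eps^{1/m}$, and says nothing by itself about the first-order approximation error. The correct version of the argument anchors the Taylor expansion at an actual critical point $x_0\in Q\cap U_0$, where $Dh(x_0)$ has rank at most $m-1$, so the affine image $h(x_0)+Dh(x_0)(Q-x_0)$ lies in a hyperplane and has diameter $\lesssim M\ell$; the thickness of the slab containing $h(Q)$ is then the remainder $\sup_{x\in Q}\Vert Dh(x)-Dh(x_0)\Vert\,\ell\sqrt{m}$, which is $\leq\eps\ell\sqrt m$ once the cubes are chosen (using the uniform continuity of $Dh$ on $\overline\Omega$ that you already invoked) so that $Dh$ oscillates by less than $\eps$ on each cube. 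In other words, the $\eps$ should bound the oscillation of $Dh$, not $|\det Dh|$; with that substitution your computation $|h(Q)|\lesssim M^{m-1}\eps\,\ell^m$ is correct. Second, ``locally finite, hence countable'' does not yield that $m(y)$ is finite almost everywhere: regular fibers can be infinite (e.g.\ $h(x)=\sin x$ on $\RR$ when $m=1$, where every $y\in(-1,1)$ is a regular value with infinite fiber), in which case $\int_U|\det Dh|=\infty$. Almost-everywhere finiteness follows from your identity precisely when $\int_{U_1}|\det Dh|<\infty$; in general the multiplicity must be read as extended-real valued. This caveat is really needed for the statement of Theorem \ref{changeOfVariablesSpivak} itself, and is harmless for the paper's application, which only uses the one-sided inequality of Corollary \ref{changeOfVariableIneq}.
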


Since $m(y)\geq 1$ for all $y\in h(U)$ except on a set of measure 0 (for which it is not defined), we immediately obtain the following corollary.
\begin{cor}\label{changeOfVariableIneq}
Let $U\subset \RR^m$ be an open set and let $h\colon U\to\RR^m$ be $C^1$. Then
$$
|h(U)|\leq \int_U |\operatorname{det}(Dh(x))|dx.
$$
\end{cor}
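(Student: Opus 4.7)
The plan is to deduce the inequality as an almost immediate consequence of Theorem \ref{changeOfVariablesSpivak}, by observing that the multiplicity function $m(y)$ is bounded below by $1$ wherever it is defined on $V = h(U)$. The only mildly technical issue is that $m(y)$ is only defined when the fibre $\{x\in U\colon h(x)=y\}$ is finite, and we need $h(U)$ to be measurable in order to talk about $|h(U)|$. So my first step would be to record the (standard) fact that $h(U)$ is Lebesgue measurable: $U$ is open in $\RR^m$, hence $\sigma$-compact, and $h$ is continuous, so $h(U)$ is a countable union of compact sets.

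Next, I would unwind the definition of $m$. By definition of $V = h(U)$, every $y\in V$ has at least one preimage $x\in U$ with $h(x)=y$. Therefore, at every $y\in V$ where $m(y)$ is defined, we have $m(y)\geq 1$. By Theorem \ref{changeOfVariablesSpivak}, $m$ is defined almost everywhere on $V$ and is measurable, so the pointwise inequality $\mathbf{1}_V(y)\leq m(y)$ holds for almost every $y\in V$.

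Integrating this inequality over $V$ with respect to $m$-dimensional Lebesgue measure then gives
\[
|h(U)| = |V| = \int_V 1\, dy \;\leq\; \int_V m(y)\, dy.
\]
Finally, I would invoke the identity from Theorem \ref{changeOfVariablesSpivak},
\[
\int_V m(y)\, dy = \int_U |\operatorname{det}(Dh(x))|\, dx,
\]
to conclude. There is really no obstacle here; the content of the corollary is entirely contained in the theorem preceding it, and the argument amounts to discarding the multiplicity factor $m(y)\geq 1$ in the change-of-variables formula.
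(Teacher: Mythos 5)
Your proof is correct and is essentially the paper's own argument: the corollary follows immediately from Theorem \ref{changeOfVariablesSpivak} by observing that $m(y)\geq 1$ almost everywhere on $h(U)$ and integrating. The paper states this in one sentence before the corollary; your proposal just makes the measurability of $h(U)$ and the integration step explicit.
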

Applying Corollary \ref{changeOfVariableIneq} with $h(x) = F(x)+tG(x)$ and $U$ as above, we conclude that
\begin{equation}\label{lowerBoundFtGB}
|(F+tG)(U)|\leq \int_U |\operatorname{det}(DF(x) + tDG(x))|dx.
\end{equation}

On the other hand, by Theorem \ref{MilnorThomTheorem} we have that for each $s\in\RR$ and each $y\in (F+sG)(U)$, either the set $\{x \in U\colon (F+sG)(x)=y\}$ is infinite, or
$$
\#\{x \in U\colon (F+sG)(x)=y\}\leq r(2r-1)^{n-1}\leq (2r)^n.
$$
In particular, we have that for each $s\in\RR$, 
\begin{equation}\label{lowerBoundForFsGU}
|(F+sG)(U)|\geq (2r)^{-n}\int_U |\operatorname{det}(DF(x)+sDG(x))|dx.
\end{equation}


For each $x\in U$, write
$$
|\operatorname{det}(DF(x)+sDG(x))|=|P_x(s)|,
$$
where $P_x(s)$ is a polynomial of degree at most $n-1$. By Lemma \ref{supBoundedByIntegral}, we have that
\begin{equation}\label{boundOnPxT}
|P_x(t)| \lesssim \Big(\frac{A\lambda }{\lambda/8}\Big)^{n-1} \norm{P}_{\L{}^1([0,\lambda/8])}\lesssim \lambda^{-1}A^{n-1}\int_{0}^{\lambda/8}|P_x(s)|ds.
\end{equation}
We are now ready to prove \eqref{boundingVolumeOfASlice}. From \eqref{mostOfTildeScapHCaptured} we have
$$
\delta^{\eps}|\tilde S \cap H_t|-C_3(n)\delta^n \leq |(F+tG)(U)|.
$$
By \eqref{lowerBoundFtGB} and \eqref{boundOnPxT}, we have
\begin{equation*}
\begin{split}
|(F+tG)(U)| & \leq \int_U |\operatorname{det}(DF(x) + tDG(x))|dx\\
& = \int_U |P_x(t)|dx\\
& \lesssim \int_U\Big(\lambda^{-1}A^{n-1}\int_{0}^{\lambda/8}|P_x(s)|ds\Big)dx\\
& =\lambda^{-1}A^{n-1}\int_{0}^{\lambda/8}\int_U |DF(x)+sDG(x)| dsdx.
\end{split}
\end{equation*}
By \eqref{lowerBoundForFsGU},
$$
\int_{0}^{\lambda/8}\int_U |DF(x)+sDG(x)| dsdx \leq r^{2n} \int_{0}^{\lambda/8}|(F+sG)(U)|ds. 
$$
Finally, by \eqref{ContainedInNbhdSPrime} we have
$$
\int_{0}^{\lambda/8}|(F+sG)(U)|ds\leq |N_{\delta}(S^\prime)|.
$$
Combining these inequalities we conclude that
$$
\delta^{\eps}|\tilde S \cap H_t|-C_3(n)\delta^n\lesssim r^{2n}\lambda^{-1}A^{n-1}|N_{\delta}(S^\prime)|.
$$
Since $r=2(n-1)^2/\eps$ depends only on $\eps$ and $n$, this establishes \eqref{boundingVolumeOfASlice} and completes the proof.
\end{proof}

We will finish this section with a corollary of Lemma \ref{extSemiAlgSetsLem} that allows us to replace the extension $\operatorname{Ext}_A(T)$ of the tube $T$ with a slightly more useful ``fattening'' of $T$. If $T$ is a $\lambda\times\delta$ tube and if $A\geq 1$, we define $\operatorname{Fat}_A(T)$ to be the $A\lambda\times A\delta$ tube that has the same midpoint and coaxial line as $T$.
\begin{cor}[Fattening tubes inside semi-algebraic sets]\label{FatSemiAlgSetsLem}
Let $n$ and $E$ be integers with $n\geq 2$, and let $\eps>0$. Then there is a constant $C(n,E,\eps)>0$ so that the following holds. Let $S\subset [0,1]^n$ be a semi-algebraic set of complexity at most $E$. Let $0<\delta\leq\lambda\leq 1$ and let $1\leq A\leq \lambda^{-1}$. Then 
\begin{equation}\label{volumeBoundDilS}
\Big|\bigcup_{\substack{T\ \textrm{a}\ \lambda\times\delta\ \textrm{tube}\\ T\subset S}}\operatorname{Fat}_A(T)\Big|\leq C(n,E,\eps)\delta^{-\eps}A^n |S|.
\end{equation}
\end{cor}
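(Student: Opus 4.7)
The plan is to reduce Corollary~\ref{FatSemiAlgSetsLem} directly to Lemma~\ref{extSemiAlgSetsLem} via the geometric identity
$$
\operatorname{Fat}_A(T) \;=\; \operatorname{Ext}_A(T^\ast),
$$
where $T^\ast$ denotes the $\lambda \times (A\delta)$ tube that shares its axis and midpoint with $T$. In other words, fattening a $\lambda\times\delta$ tube by a factor of $A$ is the same as extending the coaxial $\lambda\times(A\delta)$ tube by the same factor; this lets us convert a statement about fat tubes in $S$ into a statement about the extensions of thicker tubes in a slightly enlarged set.

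First I would verify the containment $T\subset S \Rightarrow T^\ast\subset \tilde S$, where $\tilde S := N_{A\delta/2}(S)$. Since the $\rho$-neighborhood operation is semi-algebraic with fixed-degree defining relation $\sum(x_i-y_i)^2\le\rho^2$, quantifier elimination (Tarski--Seidenberg) produces a uniform bound on the complexity of $\tilde S$ in terms of $n$ and $E$, independent of $A\delta$. This lets us apply Lemma~\ref{extSemiAlgSetsLem} to $\tilde S$ with the substitution $\delta^{in}=A\delta$, $\lambda^{in}=\lambda$, $A^{in}=A$; in the regime $A\delta\le\lambda$ the scale hypotheses are satisfied, and we obtain
$$
\Big|\bigcup_{T\subset S}\operatorname{Fat}_A(T)\Big| \;\le\; \Big|\bigcup_{T^\ast\subset\tilde S}\operatorname{Ext}_A(T^\ast)\Big| \;\le\; C(n,E,\eps)\,\delta^{-\eps}\,A^n\,|\tilde S|.
$$

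Next I would bound $|\tilde S|$ in terms of $|S|$ using a Wongkew-type surface-area estimate on $\partial S$ (which is a semi-algebraic set of dimension at most $n-1$ and bounded complexity): Theorem~\ref{WonkewThm} applied to $\partial S$ gives $|N_{A\delta/2}(\partial S)|\lesssim_{n,E} A\delta$, whence $|\tilde S|\le |S|+C(n,E)A\delta$. Any nonempty left-hand side forces $|S|\ge \lambda\delta^{n-1}\ge \delta^n$, and combined with $A\delta\le 1$ the additive correction $A^{n+1}\delta$ is absorbed into the main term $\delta^{-\eps}A^n|S|$ after adjusting constants.

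The two main obstacles are the residual scale regime $A\delta>\lambda$ (which forces $\lambda<\sqrt{\delta}$, so tubes are very short) and the careful bookkeeping of the additive neighborhood error across all parameter ranges. The former can be handled by a direct geometric argument: in that regime every $\operatorname{Fat}_A(T)$ lies in $B(\operatorname{mid}(T),A\lambda)$, so the union is contained in $N_{A\lambda}(S)$, and the Wongkew surface-area estimate applied at scale $A\lambda$ gives the claim after comparing $A\lambda$ with $A^n|S|\ge A^n\lambda\delta^{n-1}$. The latter is routine but does require checking that the complexity of the various enlargements of $S$ remains bounded in terms of $n$ and $E$ alone; this is precisely what quantifier elimination provides.
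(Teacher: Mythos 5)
Your reduction via the identity $\operatorname{Fat}_A(T)=\operatorname{Ext}_A(T^\ast)$ is a clean observation, but the step where you pass from $S$ to the metric neighborhood $\tilde S=N_{A\delta/2}(S)$ and then bound $|\tilde S|$ by a Wongkew estimate on $\partial S$ does not close. The Wongkew bound gives $|\tilde S|\leq|S|+O_{n,E}(A\delta)$, so after multiplying by $A^n$ you are left with an additive error of order $A^{n+1}\delta$. To absorb this into $\delta^{-\eps}A^n|S|$ you would need $|S|\gtrsim A\delta^{1+\eps}$, but the lower bound you actually have from nonemptiness is only $|S|\geq\lambda\delta^{n-1}$, and the hypothesis $A\delta\leq\lambda$ only upgrades this to $|S|\geq A\delta^n$, which is far weaker than $A\delta^{1+\eps}$ once $n\geq 2$. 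Concretely, in $\RR^2$ with $S$ a single $\lambda\times\delta$ tube, $\lambda=\delta^{1/2}$, and $A=\delta^{-1/4}$, your additive term is $A^{n+1}\delta=\delta^{1/4}$ while the target $\delta^{-\eps}A^n|S|=\delta^{1-\eps}$, so the error dominates by almost a full power of $\delta^{-1}$. The regime $A\delta>\lambda$ has the same problem even more dramatically: with $A\sim 1/\lambda$ and $\lambda\ll\sqrt\delta$, $N_{A\lambda}(S)$ has volume $\sim 1$, while $\delta^{-\eps}A^n|S|\sim\delta^{-\eps}(\delta/\lambda)^{n-1}\to 0$.

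The underlying issue is that replacing $S$ by a metric neighborhood of width $A\delta$ can genuinely inflate the volume (multiplicatively, by up to $A^{n-1}$ when $S$ is as thin as a single tube), and Lemma~\ref{extSemiAlgSetsLem} gives you no way to recover that loss. The paper's proof avoids this by never taking a metric neighborhood: it forms $S'=\bigcup_{T\subset S}\operatorname{Ext}_{2A}(T)$ over $\lambda/2\times\delta/2$ sub-tubes, using Lemma~\ref{extSemiAlgSetsLem} itself to control $|S'|\lesssim\delta^{-\eps}A^n|S|$ with no additive error. The lateral fattening you are after is then produced \emph{for free} because tilted sub-tubes of $T_0$, when extended longitudinally, sweep out a funnel of width $\sim A\delta$ near the ends; this funnel contains an $A\lambda/2\times A\delta$ tube, which after one more application of the extension lemma covers $\operatorname{Fat}_A(T_0)$. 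This ``extension implies fattening via tilted sub-tubes'' mechanism is the key idea you are missing, and without it the metric-neighborhood route cannot give the sharp volume factor.
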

\begin{proof}
Define
\begin{equation*}
\begin{split}
S^\prime &= \bigcup_{\substack{T\ \textrm{a}\ \lambda/2 \times\delta/2\ \textrm{tube}\\ T\subset S}}\operatorname{Ext}_{2A}(T),\\
S^{\prime\prime} & = \bigcup_{\substack{T\ \textrm{a}\ A\lambda/2 \times A\delta/2\ \textrm{tube}\\ T\subset S^\prime}}\operatorname{Ext}_4(T).
\end{split}
\end{equation*}
By Lemma \ref{extSemiAlgSetsLem}, there is a constant $C$ (depending on $n,E$ and $\eps$) so that
$$
|S^{\prime\prime}|\leq C\delta^{-\eps/2} 8^n |S^\prime| \leq C^2\delta^{-\eps}(8A)^n |S|.
$$
All that remains is to verify that
$$
\bigcup_{\substack{T\ \textrm{a}\ \lambda\times\delta\ \textrm{tube}\\ T\subset S}}\operatorname{Fat}_A(T)\subset S^{\prime\prime}.
$$
To see this, let $T_0\subset S$ be a $\lambda\times\delta$ tube. Then
$$
\bigcup_{\substack{T\ \textrm{a}\ \lambda/2 \times\delta/2\ \textrm{tube}\\ T\subset T_0}}\operatorname{Ext}_{2A}(T)
$$
contains a $A\lambda/2 \times A\delta$ tube with the same coaxial line as $T_0$. The midpoint of this tube has distance $\leq A\lambda$ from the midpoint of $T_0$. Call this tube $T_1$. Finally,
$$
\operatorname{Fat}(T_0)\subset  \bigcup_{\substack{T\ \textrm{a}\ A\lambda/2 \times A\delta/2\ \textrm{tube}\\ T\subset T_1}}\operatorname{Ext}_4(T).\qedhere
$$
\end{proof}

\subsection{Tubes inside semi-algebraic sets}
\begin{lem}\label{CountingTubesInsideSetsLem}
Let $n,E,$ and $K$ be integers with $n\geq 2$, and let $\eps>0$. Then there is a constant $C(n,E,K,\eps)>0$ so that the following holds. Let $S_1\supset \ldots\supset S_d$ be semi-algebraic sets of complexity at most $E$. Let $r_1\geq r_2\geq\ldots\geq r_d>0$ and $\rho_1\geq\rho_2\geq\ldots\geq\rho_d>0$.
Suppose that for each index $i$, $S_i$ has diameter $r_i$ and obeys the growth condition
\begin{equation}\label{SiGrowthCondition}
|N_{\rho_i}(S_i)\cap B(x,r)|\leq E\rho_i^{i}r^{n-i}\quad\textrm{for all balls}\ B(x,r).
\end{equation}
Let $0<\delta\leq\rho_1/r_1$, and let $\lines$ be a set of lines pointing in $\delta$-separated directions with the property that for each $L\in\lines$ and each index $i$,
\begin{equation}\label{TIntersectsEachVariety}
L\cap N_{\rho_i}(S_i) \ \textrm{contains a line segment of length}\ r_i/K.
\end{equation}
Then
\begin{equation}\label{boundNumberDirectionSeparatedTubes}
\#\lines \leq C(n,E,K,\eps)\big(\frac{r_1}{\rho_d}\big)^{\eps} \delta^{1-n-\eps}  \frac{\rho_1\cdots\rho_d}{r_1\cdots r_d}.
\end{equation}
\end{lem}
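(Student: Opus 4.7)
My plan is to prove the lemma by an iterative application of Corollary \ref{FatSemiAlgSetsLem}, using one fattening at each of the $d$ levels of the nested sequence $S_1\supset\cdots\supset S_d$. For each line $L\in\lines$ and each $i$, I fix a length-$r_i/K$ segment $I_i(L)\subset L\cap N_{\rho_i}(S_i)$, and let $T_L^{(i)}$ denote the $(r_i/K)\times\delta$ tube around it. As a first reduction, I perform a standard dyadic pigeonhole on the positions of the $I_i(L)$ along $L$, which costs at most $(\log(r_1/\rho_d))^d$ and is therefore absorbed into the $(r_1/\rho_d)^{\eps}$ factor in the claim. After this reduction, I may assume that for each $L$ the segments are nested, $I_d(L)\subset\cdots\subset I_1(L)$, and centered around a common point $p_L\in L$; an additional localization (since $S_1$ has diameter $r_1$) places the relevant $p_L$ in a fixed ball $B$ of radius $O(r_1)$.

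Next I will iteratively fatten the tubes, proceeding from $i=d$ down to $i=1$. At step $i$, I apply Corollary \ref{FatSemiAlgSetsLem} to the semi-algebraic set $W_i:=N_{\rho_i}(S_i)\cap B(x_0,3r_i)$, whose volume is $\lesssim\rho_i^{i}r_i^{n-i}$ by the growth hypothesis, with fattening factor $A_i:=r_{i-1}/r_i$ (where $r_0:=1$). This enlarges the current tubes from dimensions $(r_i/K)\times\sigma_i$ to $(r_{i-1}/K)\times (A_i\sigma_i)$, with $\sigma_d=\delta$ initially, and yields the per-step bound $\big|\bigcup_L \operatorname{Fat}_{A_i}(T_L^{(i)})\big|\leq C\delta^{-\eps/(3d)}A_i^n|W_i|$. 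The nesting of segments and of the $S_i$ is what permits the iteration: the fattened tubes produced at step $i$ are contained, up to constants in the scales, in the next ambient set $W_{i-1}$, so the step-$(i-1)$ bound can be composed with it. A telescoping calculation $\prod_i A_i^n=r_0^n/r_d^n=1/r_d^n$ together with the $d$ applications of the growth hypothesis then gives, after all $d$ iterations, that the final fattened tubes have dimensions $\sim 1\times(\delta/r_d)$ and their union volume is bounded by $\lesssim\delta^{-\eps}\prod_{i=1}^{d}(\rho_i/r_i)$ up to polynomial factors in $K$.

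To conclude, I will convert this volume bound into an estimate on $\#\lines$ via direction-separation. The final tubes have width $\delta/r_d$, while the underlying lines are $\delta$-separated in direction, so at every point the number of such tubes passing through is bounded by the number of $\delta$-separated directions lying in a cone of angular width $\delta/r_d$, which is $\lesssim (1/r_d)^{n-1}$. Dividing the union-volume bound by the per-tube volume $(\delta/r_d)^{n-1}$ and accounting for this overlap yields $\#\lines\cdot\delta^{n-1}\lesssim\delta^{-\eps}\prod_i\rho_i/r_i$, which rearranges to the claimed estimate once the $(r_1/\rho_d)^\eps$ factor from the initial pigeonholing is reintroduced.

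The main technical obstacle will be verifying, at each iteration step, that the fattened tubes actually lie inside the next-level semi-algebraic set $W_{i-1}$. After $d-i$ fattenings the tube width grows to $\sigma_i=\delta\cdot r_i/r_d$, so for the containment one needs $\sigma_i\leq\rho_{i-1}$, i.e.\ $\delta r_i/r_d\leq\rho_{i-1}$ for each $i$. This does not follow directly from the single hypothesis $\delta\leq\rho_1/r_1$, and I expect most of the remaining work to consist either of a case analysis that stops the iteration early at any level where this fails (invoking the trivial $d=0$ bound for the remaining levels) or of an adaptive pigeonhole that adjusts the tube widths to each level so that the containment holds automatically; handling the transitions between these regimes cleanly is where I expect the argument to be most delicate.
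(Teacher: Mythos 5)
Your overall strategy — iteratively fattening the tube segments via Corollary~\ref{FatSemiAlgSetsLem} from scale $r_d$ up to scale $1$ — is the same starting point the paper uses. However, there are two genuine gaps, the first of which is fatal as written.

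The main gap is that your iteration does not yield the claimed volume bound. At each step you fatten inside $W_i = N_{\rho_i}(S_i)\cap B(x_0,3r_i)$ and obtain $\big|\bigcup_L\operatorname{Fat}_{A_i}(T_L^{(i)})\big|\lesssim A_i^n|W_i|$. But these per-step bounds are \emph{independent} of each other: Corollary~\ref{FatSemiAlgSetsLem} applied at step $i-1$ controls the union of fattened tubes by $A_{i-1}^n|W_{i-1}|$ regardless of what volume was established at step $i$, because the corollary bounds the union over \emph{all} tubes inside the ambient semi-algebraic set, not just the ones you iteratively produced. So after $d$ steps you only know that the final union has volume $\lesssim A_1^n|W_1|\lesssim\rho_1 r_1^{n-1}$ (which after rescaling $r_1=1$ is just $\rho_1$), and this does not telescope to $\prod_i\rho_i/r_i$. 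The missing mechanism is what the paper inserts between fattenings: intersect the fattened union with $N_{2\rho_{i-1}}(S_{i-1})$ to form $\tilde S_{i-1}$, then apply the growth condition via a covering argument. Concretely, the fattened union is a union of tubes of thickness $\sim\rho_i\,r_{i-1}/r_i$, so it is covered by $\lesssim\rho_i^{-n}|\tilde S_i|$ balls of that radius; applying \eqref{SiGrowthCondition} at level $i-1$ to each ball and summing gives $|\tilde S_{i-1}|\lesssim(\rho_i/r_i)\,\rho_{i-1}^{\,i-1}r_{i-1}^{\,n-i+1}$, which iterates to $|\tilde S_1|\lesssim r_1^n\prod_i(\rho_i/r_i)$. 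It is the interaction of the fattening bound, the covering number, and the growth condition at the next level that makes the recursion compose; without the intersection-and-cover step you cannot extract the product $\prod_i\rho_i/r_i$ from the $d$ invocations of the growth hypothesis. This same intersection also dissolves the containment worry you flag at the end: you never require the fattened tubes to sit inside $N_{\rho_{i-1}}(S_{i-1})$; you keep only the part of the fattened union inside $N_{2\rho_{i-1}}(S_{i-1})$, and \eqref{TIntersectsEachVariety} at level $i-1$ guarantees each line still carries a segment of length $r_{i-1}/K$ in the intersected set.

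The second gap is your conversion from volume to count. You claim the tubes through a point have directions confined to a cone of width $\delta/r_d$, giving overlap $\lesssim(1/r_d)^{n-1}$. But all the original segments $T_L^{(d)}$ lie in a set of diameter $\sim r_d$, so a unit-length fattened tube of width $\delta/r_d$ through a point $x$ at distance $\sim 1$ from that set has its coaxial line constrained only to pass through a ball of radius $\sim r_d$ and a ball of radius $\sim\delta/r_d$ separated by distance $\sim 1$; the admissible directions span a cone of width $\sim\max(r_d,\delta/r_d)$, not $\delta/r_d$. Whenever $\delta\lesssim r_d^2$ the overlap can be as large as $(r_d/\delta)^{n-1}$, and the simple ``union volume divided by tube volume times overlap'' computation fails. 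The paper avoids this entirely: each line carries a $r_1/K\times\delta$ tube inside the final semi-algebraic set $\tilde S_1$ of bounded complexity, and Theorem~\ref{polyWolffAxiomsThm} applied to $\tilde S_1$ gives $\#\lines\lesssim|\tilde S_1|\,\delta^{1-n-\eps}r_1^{-n}$ directly. Your argument would need to invoke that theorem (or its ancestor, the polynomial Wolff axioms) rather than a naive overlap count.
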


\begin{proof}
By translating if necessary, we can assume that $0\in S_d$, and thus $S_i\subset B(0,r_i)$ for each index $i$. Next, observe that Lemma \ref{CountingTubesInsideSetsLem} is dilation invariant. Thus we may assume that $r_1 = 1$, and in particular each set $S_i$ is contained in $[0,1]^n$ (even though $r_1=1$, in the arguments below we will often keep track of terms involving $r_1$, as we believe this adds clarity to the arguments). Finally, we may assume that $r_i> \rho_i$ for each index $i$; indeed if this inequality fails for some index $i$, then we may simply omit the corresponding requirement \eqref{TIntersectsEachVariety} and re-index the remaining sets (note that the growth condition \eqref{SiGrowthCondition} remains true after this re-indexing).

Define $\tilde S_d=N_{2\rho_d}(S_d)$. For each $j=1,\ldots,d-1,$ define
$$
\tilde S_{d-j}=N_{2\rho_{d-j}}(S_{d-j}) \cap \bigcup_{\substack{T\ \textrm{a}\ r_{d-j+1}/K\times\rho_{d-j+1}\ \textrm{tube}\\ T\subset \tilde S_{d-j+1}}}\operatorname{Fat}_{\frac{Kr_{d-j}}{r_{d-j+1}}}(T).
$$
It is easy to verify that $\tilde S_{d-j}$ is semi-algebraic, and the complexity of $\tilde S_{d-j}$ depends only on $E,n,$ and $j$.

Let $L\in\lines$. Since \eqref{TIntersectsEachVariety} is true for $i=d$, we have that $L\cap N_{\rho_d}$ contains a line segment of length $r_d/K$, and thus there is a $r_d/K\times\rho_d$ tube contained in $\tilde S_d$ whose coaxial line is $L$; this tube must be contained in $B(0,r_d)$. We conclude that
$$
L\cap B(0,r_{d-1})\subset \bigcup_{\substack{T\ \textrm{a}\ r_{d}/K\times\rho_{d}\ \textrm{tube}\\ T\subset \tilde S_{d}}}\operatorname{Fat}_{\frac{Kr_{d-1}}{r_{d}}}(T).
$$
Since \eqref{TIntersectsEachVariety} is true for $i=d-1$, we also have that $L\cap N_{\rho_{d-1}}(S_{d-1})=L\cap N_{\rho_{d-1}}(S_{d-1})\cap B(0,r_{d-1})$ contains a line segment of length $r_{d-1}/K$, and thus there is a $r_{d-1}/K\times\rho_{d-1}$ tube contained in $\tilde S_{d-1}$ whose coaxial line is $L$; this tube must be contained in $B(0,r_{d-1})$. We conclude that 
$$
L\cap B(0,r_{d-2})\subset \bigcup_{\substack{T\ \textrm{a}\ r_{d-1}/K\times\rho_{d-1}\ \textrm{tube}\\ T\subset \tilde S_{d-1}}}\operatorname{Fat}_{\frac{Kr_{d-2}}{r_{d-1}}}(T).
$$
An identical argument to the one above shows that $L\cap \tilde S_{d-1}$ contains a line segment of length $r_{d-2}/K$, and this line segment must be contained in $B(0,r_{d-2})$. Iterating this argument, we conclude that $L\cap \tilde S_1$ contains a line segment of length $r_1/K$, and thus there is a $r_1/K\times\rho_1$ tube contained in $\tilde S_1$ whose coaxial line is $L$. Since $\delta\leq\rho_1$, there is also a $r_1/K\times \delta$ tube contained in $\tilde S_1$ whose coaxial line is $L$. The lines in $L$ point in $\delta$-separated directions, so we can apply Theorem \ref{polyWolffAxiomsThm} to conclude that
\begin{equation}\label{boundOnTubesSzS1}
\#\lines\leq C^\prime(n,E,\eps)|\tilde S_1|\delta^{1-n-\eps}r_1^{-n}K^n.
\end{equation}
Our next task is to bound $|\tilde S_1|$.

We will prove by induction that for each $j=0,1,\ldots,d-1$,
\begin{equation}\label{ineqForSdj}
|\tilde S_{d-j}| \leq C_j(n,E,K,\eps)\rho_d^{-\frac{j\eps}{d-1}}\rho_{d-j}^{d-j} r_{d-j}^{n-d+j}\prod_{i=0}^{j-1} (\rho_{d-i}r_{d-i}^{-1}).
\end{equation}
When $j=0$, \eqref{ineqForSdj} is the bound 
$$
|\tilde S_d|\leq C_0(n,E,K,\eps)\rho_d^{d}r_{d}^{n-d},
$$ 
which follows from the growth condition \eqref{SiGrowthCondition}. 

Suppose now that the result has been proved for some $j<d-1$. Let
$$
W_{d-(j+1)} = \bigcup_{\substack{T\ \textrm{a}\ r_{d-j}/K\times\rho_{d-j}\ \textrm{tube}\\ T\subset \tilde S_{d-j}}}\operatorname{Fat}_{\frac{Kr_{d-(j+1)}}{r_{d-j}}}(T).
$$
By Corollary \ref{FatSemiAlgSetsLem}, we have
\begin{equation}\label{volumeFattenedTubes}
\begin{split}
|W_{d-(j+1)}| & \lesssim \rho_{d-j}^{-\frac{\eps}{d-1}} \Big(\frac{Kr_{d-(j+1)}}{r_{d-j}}\Big)^n|\tilde S_{d-j}|,\\
& \lesssim \rho_{d-j}^{-\frac{\eps}{d-1}} \Big(\frac{r_{d-(j+1)}}{r_{d-j}}\Big)^n|\tilde S_{d-j}|,
\end{split}
\end{equation}
where here (and throughout this argument) the implicit constant depends on $n,E,K,$ and $\eps$. Note that $W_{d-(j+1)}$ is a union of tubes of thickness $\rho_{d-j} \frac{Kr_{d-(j+1)}}{r_{d-j}}$, and thus if we select $r\sim  \rho_{d-j} \frac{Kr_{d-(j+1)}}{r_{d-j}}$, then

\begin{equation}
\begin{split}
\mathcal{E}_{r}(W_{d-(j+1)})
&\lesssim r^{-n} |W_{d-(j+1)}|\\
& \lesssim \rho_{d-j}^{-n -\frac{\eps}{d-1}} |\tilde S_{d-j}|.
\end{split}
\end{equation}
Applying the growth condition \eqref{SiGrowthCondition} with this value of $r$, we conclude that
\begin{equation}
\begin{split}
|\tilde S_{d-(j+1)}| & = |N_{2\rho_{d-(j+1)}}(S_{d-(j+1)}) \cap W_{d-(j+1)}| \\
&\lesssim \mathcal{E}_{r}(W_{d-(j+1)})\big(E\rho_{d-(j+1)}^{d-(j+1)}r^{n-d+(j+1)}\big)\\
&\lesssim \Big(\rho_{d-j}^{-n -\frac{\eps}{d-1}} |\tilde S_{d-j}|\Big) \Big(\rho_{d-(j+1)}^{d-(j+1)}\big(\rho_{d-j} \frac{r_{d-(j+1)}}{r_{d-j}}\big)^{n-d+(j+1)}\Big)\\
&\leq \rho_{d}^{ -\frac{\eps}{d-1}}\Big(\frac{\rho_{d-(j+1)}}{\rho_{d-j}}\Big)^{d-(j+1)}\Big(\frac{r_{d-(j+1)}}{r_{d-j}}\Big)^{n-d+(j+1)}|\tilde S_{d-j}|.
\end{split}
\end{equation}
Applying the induction hypothesis, we obtain

\begin{equation}\label{inductionClose}
\begin{split}
|\tilde S_{d-(j+1)}| & \lesssim \rho_{d}^{ -\frac{\eps}{d-1}}\Big(\frac{\rho_{d-(j+1)}}{\rho_{d-j}}\Big)^{d-(j+1)}\Big(\frac{r_{d-(j+1)}}{r_{d-j}}\Big)^{n-d+(j+1)}\\
&\qquad\qquad\cdot\Big(
C_j(n,E,K,\eps)\rho_d^{-\frac{j\eps}{d-1}}\rho_{d-j}^{d-j} r_{d-j}^{n-d+j}\prod_{i=0}^{j-1} (\rho_{d-i}r_{d-i}^{-1})\Big)\\
&= C_j(n,E,K,\eps) \rho_{d}^{-\frac{(j+1)\eps}{d-1}}\rho_{d-(j+1)}^{d-(j+1)} r_{d-(j+1)}^{n-d+(j+1)}\prod_{i=0}^{(j+1)-1} (\rho_{d-i}r_{d-i}^{-1}).
\end{split}
\end{equation}
The induction closes if we select $C_{j+1}(n,E,K,\eps)$ sufficiently large depending on $C_j(n,E,K,\eps),n,K,E$ and $\eps$.

To finish the proof, observe that
$$
|\tilde S_1|\leq C_{d-1}(n,E,K,\eps)\rho_d^{-\eps} r_{1}^{n}\prod_{i=0}^{d-1} (\rho_{d-i}r_{d-i}^{-1}).
$$
The result now follows from \eqref{boundOnTubesSzS1}.
\end{proof}

Next we will show how Lemma \ref{CountingTubesInsideSetsLem} can be used to prove Theorem \ref{polyWolffFlagsVarities}.
\begin{proof}[Proof of Theorem \ref{polyWolffFlagsVarities}]
Let $Z_1\supset \cdots\supset Z_d$, $1\geq r_1\geq\ldots\geq r_d>0$, $\delta>0,$ and $x\in\RR^n$ be as in the statement of Theorem \ref{polyWolffFlagsVarities}. Let $\tubes$ be a set of direction-separated $1\times \delta$ tubes, each of which satisfies
\begin{equation}\label{tubeIntersectsEachGrain}
|T\cap N_{2\delta}(Z_i) \cap B(x,r_i)| \geq r_i|T|,\ i=1,\ldots,d.
\end{equation}

Define $\rho_1=\ldots=\rho_d = \delta$. Since each variety $Z_i$ has codimension at least $i$, by Theorem \ref{WonkewThm}, each variety $Z_i$ satisfies the growth condition \eqref{SiGrowthCondition}. Let $\mathcal{L}$ be the set of lines coaxial with the tubes in $\tubes$. Observe that if $T\in\tubes$ is a tube with coaxial line $L$, then for each index $i=1,\ldots,d$ we have
$$
|T\cap N_{2\delta}(Z_i) \cap B(x,r_i)|/|T| \geq |L\cap N_{\delta}(Z_i) \cap B(x,r_i)|.
$$
In particular, if $T$ satisfies \eqref{tubeIntersectsEachGrain} then
\begin{equation}\label{lineIntersectsEachGrain}
|L\cap N_{\delta}(Z_i) \cap B(x,r_i)| \geq r_i,\ i=1,\ldots,d.
\end{equation}
Note that $L\cap N_{\delta}(Z_i) \cap B(x,r_i)$ is a one-dimensional semi-algebraic set of complexity $O(E)$, and thus if $K=K(n,E)$ is chosen sufficiently large, then for each index $i$, $L\cap N_{\delta}(Z_i) \cap B(x,r_i)$ contains a line segment of length at least $r_i/K$. 

To finish the proof, we apply Lemma \ref{CountingTubesInsideSetsLem} to $\lines$ with this choice of $K$.
\end{proof}

\begin{rem}
Lemma \ref{CountingTubesInsideSetsLem} was stated in greater generality than was needed to prove Theorem \ref{polyWolffFlagsVarities}. Specifically, in Theorem \ref{polyWolffFlagsVarities} we have $\rho_1=\ldots=\rho_d$. Another interesting situation occurs when $r_1>r_2>\ldots> r_d$ are much larger than 1, and $\rho_i$ is comparable to $r_i^{1/2}$; the motivation for this setup is as follows. In \cite{G2, G3}, Guth initiated the program of using polynomial partitioning to study the restriction/extension problem. When using polynomial partitioning techniques to analyze the behavior of the extension operator, a difficult sub-problem arises when many wave packets are concentrated near a low-degree variety (or more generally, wave packets are concentrated near ``grains,'' which are defined in Section \ref{multiLevelGrainsSec} below). In \cite{HR1} , Hickman and Rogers systematically studied the behavior of wave packets that concentrate near grains, and by using Theorem \ref{polyWolffAxiomsThm} they were able to obtain improved bounds for the restriction problem. Lemma \ref{CountingTubesInsideSetsLem} was used in \cite{HZ} to obtain further improvements. 
\end{rem}

\section{A multilevel grains decomposition}\label{multiLevelGrainsSec}
In this section, we will apply the polynomial partitioning theorem proved by Guth and Katz in \cite{GK} (and adapted to the present context by Guth in \cite{G3}) to collections of $1\times\delta$ tubes in $\RR^n$. We will establish a sort of dichotomy asserting that either (A): $\RR^n$ can be partitioned into disjoint pieces, and each tube is localized to a small number of these pieces, or (B): the tubes cluster into thin neighborhoods of low degree algebraic varieties. This statement will be made precise in Proposition \ref{multiLevelGrains} below. The results in this section do not make any assumptions about the directions of the tubes. In Section \ref{multilinearKakeyaSec}, we will apply Proposition \ref{multiLevelGrains} to sets of tubes pointing in different directions.
\begin{defn}
Let $\mathcal{G}$ be a set of tuples of the form $(B,P_1,\ldots,P_i)$, where $B\subset\RR^n$ is a ball, $i\geq 0,$ and each $P_j$ is a polynomial in $\RR^n$. $\mathcal{G}$ is called a \emph{tree of grains} if it satisfies the following properties.
\begin{itemize}
\item There is exactly one element $G_{\operatorname{root}}\in \mathcal{G}$ for which $i=0$.
\item If $(B,P_1,\ldots,P_i)$ and $(B^\prime,P_1,\ldots,P_i)$ are elements of $\mathcal{G}$, then $B\cap B^\prime=\emptyset$. 
\item For every $(B,P_1,\ldots,P_i)\in\mathcal{G},$ there is a ball $B^\prime$ containing $B$ so that $(B^\prime,P_1,\ldots,P_{i-1})\in\mathcal{G}$. 
\item For every $(B,P_1,\ldots,P_i)\in \mathcal{G},$ the variety $Z(P_1,\ldots,P_i)$ has codimension at least $i$.
\end{itemize} 
\end{defn}
If $G=(B,P_1,\ldots,P_i)\in \mathcal{G}$, we say that $G$ has level $i$, or $\operatorname{level}(G)=i$.  For each $0\leq i^\prime \leq i$, we define $G|_{i^\prime}$ to be the (unique) element of $\mathcal{G}$ of the form $(B^\prime,P_1,\ldots,P_{i^\prime})$ with $B\subset B^\prime$. If $G,G^\prime\in\mathcal{G}$ and $G$ has level $i$, we write $G\preceq G^\prime$ if $G^\prime = G|_{i^\prime}$ for some $0\leq i^\prime\leq i$. The relation $\preceq$ defines the natural partial order on $\mathcal{G}$ that arises from its tree structure. Note that $G \preceq G_{\operatorname{root}}$ for every $G\in\mathcal{G}$.

We define the depth of a tree $\mathcal{G}$ to be one less than the maximum length of a chain in $\mathcal{G}$. For example, if $\mathcal{G}=\{G_{\operatorname{root}}\}$, then $\mathcal{G}$ has depth 0. We define the complexity of $\mathcal{G}$ to be the maximum degree of any polynomial appearing in any tuple in $\mathcal{G}$. In practice, our trees will always have finite cardinality, so these quantities will always be finite.

\begin{defn}\label{compatibleWithTree}
Let $\mathcal{G}$ be a tree of grains. For each $G\in\mathcal{G}$, let $X^G\subset\RR^n$. We say that the set system $\{X^G\}_{G\in\mathcal{G}}$ is \emph{compatible} with $\mathcal{G}$ if the following holds.
\begin{itemize}
\item For each $G=(B,P_1,\ldots,P_i)\in\mathcal{G}$, $X^G\subset B\cap N_{C\delta}\big(Z(P_1,\ldots,P_i)\big)$, where $C>0$ is a constant (see Remark \ref{compatibleRemark} below).  

\item If $G,G^\prime\in\mathcal{G}$ and $G \preceq G^\prime$, then $X^G\subset X^{G^\prime}$.

\item If $G,G^\prime\in\mathcal{G}$ are not comparable, then then $X^G\cap X^{G^\prime}=\emptyset$.
\end{itemize} 
\end{defn}

\begin{rem}\label{compatibleRemark}
Definition \ref{compatibleWithTree} is (intentionally) slightly ambiguous, since we have not specified the constant $C$. In this paper we can take $C = 3\sqrt n$. 
\end{rem}

In this section we will be interested in the interactions between collections of tubes and certain discretized subsets of $\RR^n$ that we will call $\delta$-cubes.

\begin{defn}
We define a $\delta$-cube to be a set of the form $v + [0,\delta]^n$, where $v\in (\delta\ZZ)^n$. In particular, any two distinct $\delta$-cubes have disjoint interiors.
\end{defn}

The main result of this section is the following multilevel grains decomposition for families of $1\times\delta$ tubes in $\RR^n$. 

\begin{prop}\label{multiLevelGrains}
Let $2\leq k\leq n$, let $\tubes_1,\ldots,\tubes_k$ be sets of $1\times\delta$ tubes contained in $B(0,1)\subset \RR^n$, and let $Y$ be a set of tuples $(Q,T_1,\ldots,T_k)$, where $Q$ is a $\delta$-cube, and $T_j\in\tubes_j$ with $T_j\cap Q\neq\emptyset$ for each index $j$. Then for each $0\leq m\leq n$ and each $\eps>0$, there exists:
\begin{itemize}
\item A tree $\mathcal{G}$ of grains of depth $m$ and complexity $E(n,\eps)$.
\item For each index $j$ and each $T_j\in\tubes_j$, a set system $\{T_j^G\}_{G\in\mathcal{G}}$ that is compatible with $\mathcal{G}$.
\item For each index $0\leq i\leq m$ and each index $1\leq j\leq k$, a length $\delta\leq \ell_{i,j}\leq 1.$ 
\end{itemize}

These objects have the following properties. 
\begin{enumerate}
\myitem{M1}\label{tubesUniformLength} {\bf The tubes have uniform length.} For each $T_j\in\tubes_j$ and each $G\in\mathcal{G}$ with $\operatorname{level}(G)=i$, $T_j^G$ is a (possibly empty) disjoint union of $\ell_{i,j}\times\delta$ tubes.
\myitem{M2}\label{grainsLocalized}{\bf The grains are localized.} If $G=(B,P_1,\ldots,P_i)\in\mathcal{G}$, then $B$ has radius at most $4\max_{1\leq j\leq k}\ell_{i,j}+\delta$.
\myitem{M3}\label{tubesTouchFewGrains}{\bf The tubes touch few grains.} For each index $j$ and each $G\in\mathcal{G}$ with $\operatorname{level}(G)=i$, we have
\begin{equation}\label{tubesTouchFewGrainsEquation}
M^{-\eps} (D_1^{1-n}\cdots D_i^{i-n})(\#\tubes_j)\lesssim_{\eps} \sum_{T_j\in\tubes_j} \#\operatorname{CC}(T_j^G)\lesssim_{\eps} M^{\eps}(D_1^{1-n}\cdots D_i^{i-n})(\#\tubes_j),
\end{equation}
where
\begin{equation}\label{defnM}
M = \delta^{-1}\prod_j(\#\tubes_j),
\end{equation}
and the real numbers $D_1,\ldots,D_m$ are defined so that $\mathcal{G}$ has $D_i^{n-i+1}$ grains that have level $i$.
\myitem{M4}\label{mostOfYCaptured} {\bf $Y$ is evenly distributed over the tree.} For each $G\in\mathcal{G}$ with $\operatorname{level}(G)=i$, we have
\begin{equation}\label{YGBddAbove}
\begin{split}
&M^{-\eps} (D_1^{-n}\cdots D_i^{i-n-1})
\sum_{Q} \big(\#\{
(Q, T_1,\ldots,T_k)\in Y\}\big)^{{\frac{1}{k-1}}}\\
&\qquad \sum_{Q} \big(  \#\{
(Q, T_1,\ldots,T_k)\in  Y \colon 
Q\cap T_j^G \neq\emptyset\ \textrm{for each index}\ j\}\big)^{\frac{1}{k-1}}
\\
&\qquad\qquad\lesssim_{\eps} \ M^{\eps} (D_1^{-n}\cdots D_i^{i-n-1})
\sum_{Q} \big(\#\{
(Q, T_1,\ldots,T_k)\in Y\}\big)^{{\frac{1}{k-1}}}.
\end{split}
\end{equation}
\end{enumerate}
\end{prop}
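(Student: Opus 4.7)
The plan is to construct $\mathcal{G}$ iteratively, one level at a time, by alternating polynomial partitioning with multi-scale dyadic pigeonholing. I would induct on the current depth. For the base case, set $\mathcal{G}=\{G_{\operatorname{root}}\}$ with $G_{\operatorname{root}}=(B(0,2),(\,))$, $T_j^{G_{\operatorname{root}}}=T_j$, and $\ell_{0,j}=1$; all four properties hold at level zero since no $D_i$ factors appear and each tube is a single connected component of length $1$.

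For the inductive step $i-1 \to i$, fix a grain $G=(B_G,P_1^G,\ldots,P_{i-1}^G)$ at level $i-1$ and let $V_G=Z(P_1^G,\ldots,P_{i-1}^G)$. Apply the Guth--Katz polynomial partitioning theorem in the form adapted to varieties (as used in \cite{G3}) inside $V_G \cap B_G$, with weights given by the relevant tube-segment contributions and $Y$-tuple contributions. This produces a polynomial $P_i^G$ of chosen degree $d_G$ whose zero set cuts $V_G\cap B_G$ into $\sim d_G^{n-i+1}$ cells with approximately equal weighted tube mass and approximately equal weighted $Y$-mass. Each cell, paired with a small enclosing ball (radius bounded by $\sim \max_j \ell_{i-1,j}$, yielding \ref{grainsLocalized}) and with the tuple $(P_1^G,\ldots,P_i^G)$, becomes a candidate level-$i$ grain. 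Tube segments inside $T_j^G$ are then cut by $Z(P_i^G)$; by a B\'ezout-type bound inside $V_G$, each length-$\ell_{i-1,j}$ segment splits into $O(d_G^{\,i})$ shorter pieces, one per cell it enters.

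To secure the uniform length $\ell_{i,j}$ demanded by \ref{tubesUniformLength} and the balance conditions \ref{tubesTouchFewGrainsEquation} and \ref{YGBddAbove} with the same parameters $D_1,\ldots,D_i$ across every level-$i$ grain and every $j$, I dyadically pigeonhole simultaneously over: a common length scale $\ell_{i,j}$ for each $j$ (retaining only the popular dyadic length among the sub-segments); a common value of $\#\operatorname{CC}(T_j^{G'})$ across the retained level-$i$ grains $G'$; and a common value of the $Y$-mass captured by each retained $G'$. Each pigeonhole step costs at most $\log^{O(1)}(M/\delta)\leq M^{O(\eps)}$, which is absorbed by the $M^{\pm\eps}$ slack. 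The parameter $D_i$ is defined implicitly by $D_i^{n-i+1}=\#\{\textrm{retained level-}i\textrm{ grains}\}$, and compatibility (Definition \ref{compatibleWithTree}) is forced by the containment of each cell in $N_{C\delta}(V_G\cap Z(P_i^G))\cap B_G$ with $C=3\sqrt n$.

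The main obstacle is that the balance inequality \ref{YGBddAbove} is stated in the nonstandard $\ell^{1/(k-1)}$-sum over $\delta$-cubes $Q$, so uniform balance in this quasinorm has to be engineered rather than inherited from the linear tube count supplied by polynomial partitioning. I would address this by first dyadically pigeonholing $\#\{(Q,T_1,\ldots,T_k)\in Y\}$ into a single scale across all relevant $Q$, so that each $Q$'s contribution to the $\ell^{1/(k-1)}$-sum is comparable, and then invoking the weighted polynomial partitioning theorem with weights equal to this pigeonholed $Y$-count; the cells are then balanced in the $Y$-weighted sense, which converts back into the $\ell^{1/(k-1)}$-balance after undoing the pigeonhole. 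Since $m\leq n$ is bounded, the cumulative logarithmic losses across all levels stay within the claimed $M^{\pm\eps}$ tolerance.
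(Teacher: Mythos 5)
Your overall architecture (build the tree level by level via polynomial partitioning plus dyadic pigeonholing) is the same as the paper's, but there are two genuine gaps in the execution.

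\textbf{Cells are not grains.} You propose that the partitioning polynomial $P_i^G$ cuts $V_G\cap B_G$ into $\sim d_G^{n-i+1}$ cells and that ``each cell \ldots becomes a candidate level-$i$ grain,'' with compatibility ``forced by the containment of each cell in $N_{C\delta}(V_G\cap Z(P_i^G))\cap B_G$.'' This last claim is false: the cells are the connected components of the \emph{complement} of $Z(P_i^G)$, so they lie on the wrong side of the variety and are certainly not contained in the thin neighborhood of $Z(P_i^G)$. By Definition~\ref{compatibleWithTree}, a level-$i$ grain $(B,P_1,\ldots,P_i)$ must carry set systems $X^G\subset B\cap N_{C\delta}(Z(P_1,\ldots,P_i))$, where $Z(P_1,\ldots,P_i)$ has codimension $i$, i.e.\ one dimension less than $V_G$. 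Grains come from the \emph{algebraic} branch of the cellular/algebraic dichotomy, not from the cells. The paper's Lemma~\ref{singleGrainDecompFirstStep} builds a tree of cellular vertices (where the mass keeps splitting into cells) and algebraic leaves (where the mass concentrates near $Z(P_1,\ldots,P_m,P)$); the grains are the algebraic leaves at a well-chosen level $s_0$, and the cellular layers are only a recursive device for producing them. As a secondary point, the per-segment splitting bound ``$O(d_G^i)$'' is not right either: a tube of thickness $\delta$ entering the cells of a single degree-$E$ partition meets $O(E)$ cells, and iterating over $s_0$ levels of the tree gives $O_E(1)(E+1)^{s_0}\lesssim_\eps(\#\mathcal{A})^{1/(n-m)+\eps}$ pieces, which is the exponent $1/(n-m)$ that feeds into Property~\ref{tubesTouchFewGrains}, not a power of the codimension.

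\textbf{The tension between \ref{grainsLocalized} and the lower bound in \ref{tubesTouchFewGrains}.} You propose to enforce uniform lengths $\ell_{i,j}$, uniform $\#\operatorname{CC}(T_j^{G'})$, and uniform $Y$-mass by ``pigeonhol[ing] simultaneously,'' absorbing $\log$ losses into $M^{\pm\eps}$. But pigeonholing can only make quantities \emph{comparable across grains}; it does not by itself force the specific two-sided bound $\sum_{T_j}\#\operatorname{CC}(T_j^G)\sim M^{\pm\eps}(D_1^{1-n}\cdots D_i^{i-n})(\#\tubes_j)$, and in particular the lower bound is not automatic. Worse, these two goals pull against each other: cutting tube segments shorter (to inflate $\#\operatorname{CC}$) may leave sub-tubes of length $\ell_{i,j}$ that no longer fit in a ball of radius $\sim\max_j\ell_{i,j}$ once you also re-localize the grain balls, and shrinking the balls to restore \ref{grainsLocalized} changes $\#\operatorname{CC}$ again. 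The paper treats exactly this as the key difficulty (citing the obstruction discussed in Section 11.1 of \cite{G3}), and resolves it in Lemma~\ref{singleGrainDecompSecondStep} by an explicit alternating iteration: \emph{Step 1} cuts each surviving segment into equal pieces of a dyadic length so that the lower bound of \eqref{numberTubeSegmentsTj} holds; \emph{Step 2} re-covers each grain ball by balls of the new smaller scale $2r^{(1)}$ and re-pigeonholes; one then checks that each full round only costs a factor $\lesssim(\log M)^{-1}$ and that the process terminates after $O(k/\eps)$ rounds because each round shrinks some $\ell_j$ by $\delta^\eps$. Your proposal omits this termination mechanism, which is precisely the new content of the paper's Lemma~\ref{singleGrainDecompSecondStep}.
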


We will prove Proposition \ref{multiLevelGrains} by repeatedly applying a ``grains decomposition'' type result. This result has two main steps, which are described in Lemmas \ref{singleGrainDecompFirstStep} and \ref{singleGrainDecompSecondStep} below. In Lemma \ref{singleGrainDecompFirstStep}, we will use polynomial partitioning to divide the set of cubes supporting $Y$ into disjoint regions, so that every tube interacts with only a small number of these regions. The ideas used in this step are not new; they first appeared in \cite{G3} in the context of the restriction problem, and in \cite{GZ} in the context of Kakeya. 

In \cite{G3}, Guth proved a $k$-broad estimate for the restriction / extension operator. This is a weaker variant of a $k$-linear restriction estimate. In Section 11.1 of \cite{G3}, Guth discusses a shortcoming of his methods that prevents him from proving a $k$-linear restriction estimate. We encountered a similar problem when attempting to prove $k$-linear Kakeya estimates, and Lemma \ref{singleGrainDecompSecondStep} is designed to overcome this problem.

Before we prove Lemmas \ref{singleGrainDecompFirstStep} and \ref{singleGrainDecompSecondStep}, we will recall some terminology and results from \cite{G3}. If $P_1,\ldots,P_m$ are polynomials in $\RR^n$, we say that $Z(P_1,\ldots,P_m)$ is a transverse complete intersection if the vectors $\nabla P_1(x),\ldots,\nabla P_m(x)$ are linearly independent for every $x\in Z(P_1,\ldots,P_m)$. In particular, if $Z(P_1,\ldots,P_m)$ is a transverse complete intersection, then $Z(P_1,\ldots,P_m)$ is a smooth submanifold of $\RR^n$ of codimension $m$. 

In \cite{GZ}, Guth and Katz used the polynomial ham sandwich theorem to construct polynomials that efficiently partition sets of points in $\RR^n$. In \cite{G3}, Guth adapted these methods to prove the following.
\begin{prop}\label{algCellularDichotomyProp}
Let $P_1,\ldots,P_m$ be polynomials in $\RR^n$, and suppose that $Z(P_1,\ldots,P_m)$ is a transverse complete intersection. Let $f\in L^1(\RR^n)$ be non-negative, and suppose that $\operatorname{supp}(f) \subset B(0,1)\cap N_{2\delta}\big(Z(P_1,\ldots,P_m)\big)$. Then for each $D\geq 1$, at least one of the following two things must happen.

\medskip

\noindent{\bf Cellular case.}
There exists a polynomial $P$ of degree $\leq D$, so that $\RR^n\backslash Z(P)$ is a union of $\lesssim D^{n-m}$ cells $O_1,\ldots,O_t$, so that if  we define $O_i^\prime = O_i\backslash N_{\delta}(Z(P))$, then 
$$
\int_{O_i^\prime}f \lesssim D^{m-n}\int f\quad\textrm{for each index}\ i,
$$
and
$$
\sum_{i=1}^t \int_{O_i^\prime}f  \geq\frac{1}{2} \int f.
$$
\medskip

\noindent{\bf Algebraic case.} There is a polynomial $P$ of degree at most $D$ so that $(P_1,\ldots,P_m,P)$ is a transverse complete intersection, and
$$
\int_{N_{\delta}(Z(P_1,\ldots,P_m,P))}f \gtrsim (\log D)^{-1}\int f.
$$
\end{prop}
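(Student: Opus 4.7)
The plan is to construct $P$ iteratively via repeated polynomial ham sandwich bisection, running for $J \sim (n-m)\log_2 D$ rounds and invoking a dichotomy at each round between clean cellular bisection and mass concentration near a lower-dimensional subvariety. Throughout, I maintain a product polynomial $\Pi_j$ whose zero set partitions $\RR^n$ into $2^j$ cells that carry roughly equal $f$-integrals.

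At round $j+1$, polynomial ham sandwich applied to the $2^j$ measures $f\chi_O\,dx$ (one per cell $O$ of the current partition) produces a bisector $Q_{j+1}$ of degree $O(2^{j/n})$, and I set $\Pi_{j+1} = \Pi_j Q_{j+1}$. A geometric summation ensures $\deg \Pi_J \leq D$. I then test whether
\begin{equation*}
\int_{N_\delta(Z(P_1,\ldots,P_m,Q_{j+1}))} f \;\geq\; \frac{1}{2J}\int f.
\end{equation*}
If yes, I halt and, after a small perturbation of $Q_{j+1}$ to make $(P_1,\ldots,P_m,Q_{j+1})$ a transverse complete intersection while preserving the concentration up to constants, output $P = Q_{j+1}$ as the algebraic case, with loss $(\log D)^{-1}\sim 1/J$. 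Otherwise I continue to round $j+2$ after perturbing $Q_{j+1}$ to restore transversality with $Z(P_1,\ldots,P_m)$.

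If all $J$ rounds complete cellularly, I take $P = \Pi_J$. By construction, each of the $2^J = D^{n-m}$ cells carries $\lesssim D^{-(n-m)}\int f$, and the total mass discarded across rounds in $N_\delta(Z(\Pi_J))$ is at most $J\cdot(2J)^{-1}\int f = \frac{1}{2}\int f$. To count the cells that intersect $\operatorname{supp}(f)\subset N_{2\delta}(Z(P_1,\ldots,P_m))$, I use the fact that maintained transversality at each round makes $Z(P_1,\ldots,P_m,Q_i)$ a smooth codimension-$(m+1)$ variety for each $i$; the connected components of $Z(P_1,\ldots,P_m)\setminus Z(\Pi_J)$ are then bounded by $\lesssim D^{n-m}$ via Theorem \ref{MilnorThomTheorem} applied after clearing denominators, which matches the target cell count in the cellular case.

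The main obstacle is the transversality management. When the ham-sandwich bisector $Q_{j+1}$ fails to intersect $Z(P_1,\ldots,P_m)$ transversally---because $Q_{j+1}$ contains a component of the variety, or $\nabla Q_{j+1}$ lies in $\operatorname{span}(\nabla P_1,\ldots,\nabla P_m)$ along a positive-dimensional set---I expect to use a perturbation argument showing that generic perturbations in the polynomial space of degree $O(2^{j/n})$ restore transversality while retaining the bisection property up to a factor $J^{-1}$. Alternatively, if no perturbation works, the failure forces $\int f$ to concentrate near the non-transverse intersection, so we are already in the algebraic case. Juggling the degree budget, the bisection invariant, and the transversality condition simultaneously across $\log D$ rounds is the central technical delicacy.
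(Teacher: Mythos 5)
The paper does not prove Proposition \ref{algCellularDichotomyProp}; it cites Guth's [G3] for it. So the relevant comparison is with Guth's argument, which your sketch is in the spirit of (iterative polynomial ham sandwich with a cellular/algebraic dichotomy). The degree bookkeeping in your proposal is consistent: running $J\sim(n-m)\log_2 D$ bisections with degree $O(2^{j/n})$ at round $j$ gives $\deg\Pi_J\lesssim D^{(n-m)/n}\leq D$, and $2^J=D^{n-m}$ sign-pattern cells each carrying $\lesssim D^{m-n}\int f$.

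However, there is a genuine gap in the mass accounting, and it is exactly the point that makes the variety-adapted version of polynomial partitioning hard. At round $j+1$, the mass that leaves the cells $O_i'$ is $\int_{N_\delta(Z(Q_{j+1}))}f$, but your halting test only controls $\int_{N_\delta(Z(P_1,\ldots,P_m,Q_{j+1}))}f$. These quantities are not comparable: a point of $\operatorname{supp}(f)\subset N_{2\delta}(Z)$ can lie in $N_\delta(Z(Q_{j+1}))$ yet be far from $Z(P_1,\ldots,P_m,Q_{j+1})=Z\cap Z(Q_{j+1})$ whenever $Q_{j+1}$ is nearly tangent to $Z$ on that region, i.e.\ when the gradient of $Q_{j+1}$ restricted to $Z$ is small. (Already in $\RR^2$ with $Z$ a line and $Q_{j+1}$ a line of tiny slope $\alpha$, the set $N_\delta(Z(Q_{j+1}))\cap N_{2\delta}(Z)$ has diameter $\sim\delta/\alpha\gg\delta$, so it is not captured by $N_{C\delta}$ of the intersection point.) Consequently it can happen that you pass every halting test, yet $\int_{N_\delta(Z(\Pi_J))}f > \frac12\int f$, so neither case of the dichotomy is reached. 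Conversely, if you swap the halting test to use $N_\delta(Z(Q_{j+1}))$, you fix the cellular count but can no longer output $Q_{j+1}$ in the algebraic case, since mass near $Z(Q_{j+1})$ and near $Z$ does not imply mass near $Z\cap Z(Q_{j+1})$. Your proposed generic perturbation restores qualitative transversality but gives no quantitative lower bound on the angle of intersection, which is what is actually needed here.

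This is precisely the obstruction Guth resolves in [G3] by replacing the $\RR^n$ ham sandwich theorem with a ham-sandwich-for-varieties statement (and a separate ``concentration forces a low-degree vanishing'' lemma) that keeps quantitative control over the behaviour of the bisector \emph{restricted to $Z$}, rather than the bisector in $\RR^n$. Without that ingredient, the two cases of the dichotomy are not exhaustive in your proposal. Repairing this is not a matter of tuning the halting constant or the perturbation; it requires the variety-adapted bisection mechanism.
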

\begin{cor}\label{decompositionCubes}
Let $P_1,\ldots,P_m$ be polynomials in $\RR^n$, and suppose that $Z(P_1,\ldots,P_m)$ is a transverse complete intersection. Let $\mathcal{Q}$ be a finite set of $\delta$-cubes. Suppose that $Q \subset B(0,1)\cap N_{2\sqrt n \delta}\big(Z(P_1,\ldots,P_m)\big)$ for each $Q\in\mathcal{Q}$. Then for each $D\geq 1$, at least one of the following two things must happen.

\medskip

\noindent{\bf Cellular case.}
There exists a polynomial $P$ of degree $\leq D$, so that $\RR^n\backslash Z(P)$ is a union of $\lesssim D^{n-m}$ cells $O_1,\ldots,O_t$, so that if  we define 
\begin{equation}\label{defnQi}
\mathcal{Q}_i = \{Q\in\mathcal{Q}\colon Q\subset O_i \backslash N_{\delta}(Z(P))\},
\end{equation}
then 
\begin{equation}\label{cellularCaseEachCellCor}
\#\mathcal{Q}_i \lesssim D^{m-n} (\#\mathcal{Q}) \quad\textrm{for each index}\ i,
\end{equation}
and
\begin{equation}\label{cullularCaseCorWeight}
\sum_{i=1}^t \#\mathcal{Q}_i  \geq\frac{1}{2} (\#\mathcal{Q}).
\end{equation}
\medskip

\noindent{\bf Algebraic case.} There is a polynomial $P$ of degree $\leq D$ so that $(P_1,\ldots,P_m,P)$ is a transverse complete intersection, and if we define
\begin{equation}
\mathcal{Q}^*=\{Q\in\mathcal{Q}\colon Q\subset N_{2\sqrt n\delta}\big(Z(P_1,\ldots,P_m,P)\big)\},
\end{equation}
then
\begin{equation}\label{algCaseCorollary}
\#\mathcal{Q}^*\gtrsim (\log D)^{-1}(\#\mathcal{Q}).
\end{equation}
\end{cor}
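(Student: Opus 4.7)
The plan is to apply Proposition \ref{algCellularDichotomyProp} to the counting function $f = \sum_{Q\in\mathcal{Q}} \chi_Q$, which is non-negative with $\int f = \delta^n \#\mathcal{Q}$ and supported in $B(0,1) \cap N_{2\sqrt n \delta}(Z(P_1,\ldots,P_m))$, since each $\delta$-cube has diameter $\sqrt n \delta$. The particular constant $2$ in the proposition's hypothesis $N_{2\delta}$ is not essential; the proposition remains valid (with adjusted implicit constants) when the neighborhood is enlarged by any fixed dimensional factor. In the algebraic case, one obtains a polynomial $P$ with $(P_1,\ldots,P_m,P)$ a transverse complete intersection and $\int_{N_\delta(Z(P_1,\ldots,P_m,P))} f \gtrsim (\log D)^{-1} \int f$. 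Any $\delta$-cube meeting $N_\delta$ of this variety lies in $N_{(1+\sqrt n)\delta} \subset N_{2\sqrt n \delta}$ of the same, so with $\mathcal{Q}^*$ as in the corollary, $\delta^n \#\mathcal{Q}^* \geq \int_{N_\delta} f \gtrsim (\log D)^{-1} \delta^n \#\mathcal{Q}$, establishing \eqref{algCaseCorollary}.

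In the cellular case, the per-cell bound \eqref{cellularCaseEachCellCor} follows immediately: each $Q \in \mathcal{Q}_i$ lies fully in $O_i' = O_i \setminus N_\delta(Z(P))$ and contributes its full measure $\delta^n$ to $\int_{O_i'} f$, so $\delta^n \#\mathcal{Q}_i \leq \int_{O_i'} f \lesssim D^{m-n} \delta^n \#\mathcal{Q}$. The total bound \eqref{cullularCaseCorWeight} is the main subtlety: the proposition's inequality $\sum_i \int_{O_i'} f \geq \tfrac{1}{2} \int f$ receives fractional contributions from $\delta$-cubes that straddle $\partial N_\delta(Z(P))$ or $Z(P)$ and so lie in no $\mathcal{Q}_i$. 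To remedy this I would apply the proposition with the thickened neighborhood parameter $\delta^* = 2\sqrt n \delta$, obtaining $\sum_i \int_{O_i \setminus N_{\delta^*}(Z(P))} f \geq \tfrac{1}{2} \int f$. Any $\delta$-cube $Q$ meeting $O_i \setminus N_{\delta^*}(Z(P))$ has a point at distance $\geq \delta^*$ from $Z(P)$; since $\operatorname{diam}(Q) = \sqrt n \delta$, every point of $Q$ is then at distance $\geq \delta^* - \sqrt n \delta = \sqrt n \delta > \delta$ from $Z(P)$, forcing $Q \subset O_i \setminus N_\delta(Z(P))$, i.e., $Q \in \mathcal{Q}_i$. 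Hence $\int_{O_i \setminus N_{\delta^*}(Z(P))} f \leq \delta^n \#\mathcal{Q}_i$, and summing yields \eqref{cullularCaseCorWeight}.

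The main technical obstacle is that the per-cell upper bound and the total lower bound, as derived above, naturally use two different neighborhood parameters ($N_\delta$ and $N_{\delta^*}$), which correspond a priori to different outputs of the proposition. The fix is to run the proposition once with the thickened neighborhood $N_{\delta^*}$; the total bound then proceeds as above, while the per-cell bound is recovered by accounting separately for cubes in the thin shell $O_i \cap (N_{\delta^*}(Z(P)) \setminus N_\delta(Z(P)))$, whose total volume is controlled by Theorem \ref{WonkewThm} applied to $Z(P)$ (a hypersurface of degree at most $D$) and absorbed into the implicit constants.
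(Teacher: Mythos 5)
There is a genuine gap in the cellular case. You correctly identify the tension: the per-cell upper bound \eqref{cellularCaseEachCellCor} wants the cubes of $\mathcal{Q}_i$ to be fully contained in the shrunken cell $O_i'$ produced by the proposition, while the total lower bound \eqref{cullularCaseCorWeight} wants every cube touching $O_i'$ to belong to $\mathcal{Q}_i$; no single choice of the neighborhood parameter can make both true (the first needs the proposition's neighborhood to be at most $\delta$, the second needs it to be at least $(1+\sqrt n)\delta$). But your proposed resolution --- run the proposition with the thick shell $N_{\delta^*}$ and ``absorb'' the shell cubes via Theorem \ref{WonkewThm} --- does not work. Wongkew gives $|N_{\delta^*}(Z(P))\cap B(0,2)|\lesssim D\delta$, hence up to $\sim D\delta^{1-n}$ shell $\delta$-cubes; for the per-cell bound you need this to be $\lesssim D^{m-n}\#\mathcal{Q}$ with a purely dimensional implicit constant, which fails whenever $\#\mathcal{Q}$ is small relative to $D^{n-m+1}\delta^{1-n}$. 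Indeed, it is entirely consistent with the proposition's cellular output (at scale $\delta^*$) that a single cell $O_i$ contains a constant fraction of $\mathcal{Q}$ as shell cubes, in which case $\#\mathcal{Q}_i\gtrsim\#\mathcal{Q}\gg D^{m-n}\#\mathcal{Q}$.

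The missing idea is the one sketched in the paper's remark immediately following the corollary: run the proposition at the \emph{unthickened} scale $\delta$, so that each $Q\in\mathcal{Q}_i$ lies fully inside $O_i'=O_i\setminus N_\delta(Z(P))$ and \eqref{cellularCaseEachCellCor} follows immediately, but then accept that \eqref{cullularCaseCorWeight} may fail because straddling cubes contribute fractional mass to $\sum_i\int_{O_i'}f$ without landing in any $\mathcal{Q}_i$. When it does fail, at least half the cubes are ``lost'' and each lost cube is contained in $N_{(\sqrt n+1)\delta}(Z(P))$; one then runs an additional algebraic step (analogous to [G3, \S5.2]) to extract from $P$ a polynomial $P'$ with $(P_1,\ldots,P_m,P')$ a transverse complete intersection near which a $\gtrsim(\log D)^{-1}$ fraction of the lost cubes cluster, landing in the corollary's algebraic case. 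This fallback is structurally required and cannot be engineered away by changing the neighborhood parameter, because the dichotomy for discrete cube counts is genuinely three-sided (cellular, degenerate cellular, algebraic) rather than two-sided. Your handling of the proposition's algebraic case and the per-cell bound in the unthickened reading are both fine; it is precisely the degenerate cellular branch that your argument does not address.
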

\begin{rem}
Note that since each $\delta$-cube in $\RR^n$ has diameter $\sqrt n\delta$, if a cube $Q$ is not contained in any cell $O_i^\prime$ then it must intersect the $\delta$-neighborhood of $\RR^n\backslash\bigcup O_i$, and thus it must be contained in the $(\sqrt{n}+1)\delta$-neighborhood of $\RR^n\backslash\bigcup O_i$. If this happens for at least half the cubes in $\mathcal{Q}$, then it is possible to find a polynomial $P$ that satisfies \eqref{algCaseCorollary}.
\end{rem}

\begin{lem}\label{singleGrainDecompFirstStep}
Let $P_1,\ldots,P_m$ be polynomials in $\RR^n$, and suppose that $Z=Z(P_1,\ldots,P_m)$ is a transverse complete intersection. Let $\mathcal{Q}$ be a finite set of $\delta$-cubes in $\RR^n$ that are contained in $B(0,1)\cap N_{2\sqrt n\delta}(Z)$. Let $\eps>0$.

Then there is a set $\mathcal{A}=\{(P_A,\mathcal{Q}_A)\}$, so that that sets $\{\mathcal{Q}_A\}_{A\in\mathcal{A}}$ are disjoint subsets of $\mathcal{Q}$, and
for each $(P_A,\mathcal{Q}_A)\in \mathcal{A}$ we have, 
\begin{enumerate}
\myitem{C1}\label{C1Item} $P_A$ is a polynomial of degree at most $E(n,\eps)$.
\myitem{C2}\label{C2Item} $Z(P_1,\ldots,P_m,P_A)$ is a transverse complete intersection.
\myitem{C3}\label{C3Item} Each $\delta$-cube in $\mathcal{Q}_A$ is contained in $N_{2\sqrt n\delta}(Z(P_1,\ldots,P_m,P_A))$.
\myitem{C4}\label{C4Item}\itemizeEqnVSpacing
\begin{equation}\label{boundOnEachGrainSize}
(\#\mathcal{A})^{-1-\eps}(\#\mathcal{Q}) \lesssim_{\eps}\#\mathcal{Q}_A \lesssim_{\eps}  (\#\mathcal{A})^{-1+\eps}(\#\mathcal{Q}).
\end{equation}
\end{enumerate}

Furthermore, if $T$ is a tube of thickness $\delta$ (and any length), then there are disjoint subsets $\{T^{A}\}_{A\in\mathcal{A}}$ of $T$ with the following properties
\begin{enumerate} 
\myitem{T1}\label{t1Item} $T^A\subset N_{3\sqrt n \delta}\big(Z(P_1,\ldots,P_m,P_A)\big)$ for each $A\in\mathcal{A}$.
\myitem{T2}\label{T2Item}  If $(P_A,\mathcal{Q}_A)\in\mathcal{A},$ $Q\in\mathcal{Q}_A$, and $Q\cap T\neq\emptyset$, then $Q\cap T^A\neq\emptyset$. 
\myitem{T3}\label{T3Item} Each set $T^A$ is either empty or a union of disjoint sub-tubes of $T$. 
\myitem{T4}\label{T4Item} The number of sub-tubes in $T^A$, summed across all $A\in\mathcal{A}$, is not too big. Specifically, we have

\begin{equation}\label{sumOfSubTubes}
\sum_{A\in\mathcal{A}}\#\operatorname{CC}(T^A)\lesssim_{\eps}(\#\mathcal{A})^{1/(n-m)+\eps}.
\end{equation}
\end{enumerate}
\end{lem}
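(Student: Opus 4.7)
The plan is to construct $\mathcal{A}$ by iteratively applying the partitioning dichotomy of Corollary~\ref{decompositionCubes} with a single degree $D=D(n,\eps)$ chosen at the end. Starting from $\mathcal{Q}$, at each active node I apply the dichotomy. In the algebraic case I obtain a polynomial $P'$ of degree $\leq D$ with $Z(P_1,\ldots,P_m,P')$ a transverse complete intersection, together with $\mathcal{Q}^*\subset N_{2\sqrt n\delta}(Z(P_1,\ldots,P_m,P'))$ of size $\gtrsim(\log D)^{-1}$ of the current subset; I declare $(P',\mathcal{Q}^*)$ to be an element of $\mathcal{A}$ and continue processing the residual. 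In the cellular case I partition the current subset into $O(D^{n-m})$ children of size $\lesssim D^{m-n}$ of the parent and recurse on each.

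Iterating produces a tree of subproblems whose leaves are the extracted algebraic grains. A pigeonhole over the tree structure extracts a single depth at which the collected leaves $\{(P_A,\mathcal{Q}_A)\}$ cover most of $\mathcal{Q}$ in approximately balanced groups, with $D$ chosen large enough that the $(\log D)^{-1}$ losses at each algebraic step and the pigeonhole slack are absorbed into the $\#\mathcal{A}^{\pm\eps}$ factor in \eqref{boundOnEachGrainSize}. Properties \ref{C1Item}, \ref{C2Item}, \ref{C3Item} are then immediate from Corollary~\ref{decompositionCubes}, and \ref{C4Item} is the content of the pigeonholing.

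For the tube statement, given a $\delta$-tube $T$ and $A\in\mathcal{A}$ extracted along a specific path of cellular descents followed by one algebraic step, I set $T^A$ to be the intersection of $T$ with the nested cells along the path and with $N_{3\sqrt n\delta}(Z(P_1,\ldots,P_m,P_A))$. Then \ref{t1Item} and \ref{T3Item} follow directly (the set is a union of disjoint thickened arcs of the coaxial line of $T$), and \ref{T2Item} holds because any $Q\in\mathcal{Q}_A$ with $Q\cap T\neq\emptyset$ lies by construction in the path's cells and in $N_{2\sqrt n\delta}(Z(P_1,\ldots,P_m,P_A))$, hence meets $T^A$.

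The substantive estimate is \ref{T4Item}. At each cellular step, the coaxial line of $T$, viewed as a curve inside the $(n-m)$-dimensional transverse complete intersection $Z(P_1,\ldots,P_m)$, meets the zero set of the degree-$D$ cellular polynomial in $\lesssim D$ points by a B\'ezout-type argument, so $T$ enters at most $\lesssim D$ children of each cellular node. After $k$ cellular levels this yields at most $D^k$ leaves $A$ with $T^A\neq\emptyset$, while the balance condition forces $\#\mathcal{A}\approx D^{k(n-m)}$, so the number of contributing $A$ is at most $\approx(\#\mathcal{A})^{1/(n-m)}$. At each such $A$, the sub-tube $T^A$ has $\lesssim D$ connected components by Theorem~\ref{MilnorThomTheorem} applied to the degree-$\leq D$ hypersurface $Z(P_A)$, and this constant factor is absorbed into the $(\#\mathcal{A})^\eps$ slack in \eqref{sumOfSubTubes}. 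The hardest part is the bookkeeping: balancing the $(\log D)^{-1}$ algebraic-case loss against the $D^{n-m}$ cellular branching so that a single choice of $D$ and iteration depth simultaneously gives tightly balanced groups \emph{and} the tube complexity bound $(\#\mathcal{A})^{1/(n-m)+\eps}$. This is precisely the technical difficulty noted in \cite{G3} as the obstruction to upgrading broad estimates to linear ones, and the present lemma is designed specifically to overcome it.
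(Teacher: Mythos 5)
Your proposal follows essentially the same route as the paper: build a partitioning tree by iterating Corollary~\ref{decompositionCubes}, pigeonhole to extract a single balanced level of algebraic leaves, and control the sub-tube count by the degree bound on how often the coaxial line of $T$ can cross the cellular hypersurfaces. The paper's one organizational simplification is worth noting: in the algebraic case, the paper makes the extracted algebraic set a \emph{terminal} leaf (with a single child and no further processing of the residual), whereas you ``continue processing the residual.'' The paper's choice ensures each root-to-leaf path has at most one algebraic step, so the only source of branching seen by a tube is the $\le E+1$ sub-tubes created at each cellular subdivision; at tree level $s_0$ the sub-tube count is then a clean $O_E(1)(E+1)^{s_0}$, which plays off directly against the lower bound $\#\mathcal{A}\gtrsim (E^{n-m})^{s_0}$. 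If one does process the residual, the same count still works — algebraic steps do not split $T$ further — but the tree level no longer coincides with the cellular depth, and the pigeonhole must be carried out jointly over cellular depth and grain size rather than over tree level alone; you allude to this (``the hardest part is the bookkeeping'') but the paper's ``drop the residual'' convention makes it essentially automatic. Two small technical points: the connected-component bound for $T^A = T\cap N_{3\sqrt n\delta}(Z(P_1,\dots,P_m,P_A))$ is for a \emph{semi-algebraic} set rather than a variety, so the correct reference is Theorem~\ref{BPRTheorem} (Basu--Pollack--Roy) rather than Theorem~\ref{MilnorThomTheorem}; and your assertion that balance ``forces $\#\mathcal{A}\approx D^{k(n-m)}$'' overstates what is needed — only the lower bound $\#\mathcal{A}\gtrsim D^{k(n-m)-O(\eps)}$ is required, and that follows from the upper bound on each $\#\mathcal{Q}_A$ together with the mass-preservation inequality.
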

\begin{proof}
This lemma is not new; a variant of this lemma first appeared in \cite{G3} in the context of the restriction problem, and in \cite{GZ} in the context of Kakeya. 

Let $E=E(\eps,n)$ be a large number to be chosen later. We will repeatedly apply Corollary \ref{decompositionCubes} to construct a tree $\mathcal{T}$ as follows. The root of $\mathcal{T}$ will be the pair $(\RR^n,\mathcal{Q})$. The non-leaf vertices of $\mathcal{T}$ will be pairs $(O,\mathcal{Q}_O)$, where $O$ is an open subset of $\RR^n$ and $\mathcal{Q}_O\subset\mathcal{Q}$. The leaf vertices of $\mathcal{T}$ will be pairs $(P,\mathcal{Q}_P)$, where $P$ is a polynomial in $\RR^n$ of degree $\leq E$, and $\mathcal{Q}_P\subset\mathcal{Q}$. 

If $(O,\mathcal{Q}_O)$ is a non-leaf vertex, then $Q\subset O \backslash N_{\delta}(\operatorname{bdry}(O))$ for each $Q\in\mathcal{Q}_O$. If $(O^\prime,\mathcal{Q}_{O^\prime})$ is the parent of $(O,\mathcal{Q}_O)$, then $O\subset O^\prime$ and $\mathcal{Q}_O\subset\mathcal{Q}_{O^\prime}$. If $(P,\mathcal{Q}_P)$ is a leaf vertex, then $Q\subset N_{2\sqrt n\delta}(Z(P_1,\ldots,P_m,P))$ for each $Q\in\mathcal{Q}_P$. If $(O,\mathcal{Q}_{O})$ is the parent of $(P,\mathcal{Q}_P)$, then $\mathcal{Q}_P\subset\mathcal{Q}_{O}$.

For each non-leaf vertex $(O,\mathcal{Q}_O)$ of $\mathcal{T}$, exactly one of the following two things must be true.

\medskip

\noindent{\bf Case 1: Algebraic leaf}. $(O,\mathcal{Q}_O)$ has one child, which is a leaf of the form $(P,\mathcal{Q}_P$). Furthermore, $Z(P_1,\ldots,P_m,P)$ is a transverse complete intersection,
\begin{equation}\label{mostMassInLeaf}
\#\mathcal{Q}_P \gtrsim (\log E)^{-1} (\#\mathcal{Q}_O),
\end{equation}
and
\begin{equation}\label{cubesCloseToLeaf}
Q\subset N_{2\sqrt n\delta}Z(P_1,\ldots,P_m,P)\quad\textrm{for each}\ Q\in\mathcal{Q}_P.
\end{equation}

\medskip

\noindent{\bf Case 2: Cellular vertex}. $(O,\mathcal{Q}_O)$ has $\sim E^{n-m}$ children $(O_1,\mathcal{Q}_{O_1}),\ldots, (O_t,\mathcal{Q}_{O_t})$. Furthermore,
\begin{equation}\label{massSpreadOverCells}
\#\mathcal{Q}_{O_i} \lesssim  E^{m-n}(\#\mathcal{Q}_O)\quad\textrm{for each index}\ i,
\end{equation}
and
\begin{equation}\label{mostMassCapturedByCells}
\sum_{i=1}^t(\#\mathcal{Q}_{O_i})\geq \frac{1}{2}(\#\mathcal{Q}_O).
\end{equation}
Finally, the sets $O_1,\ldots,O_t$ are disjoint.

Observe that the maximum depth of $\mathcal{T}$ is $\lesssim \log_E(\#\mathcal{Q})$, since each set $\mathcal{Q}_O$ at level $s$ contains at least one, but at most $\big(C(n) E^{m-n}\big)^s(\#\mathcal{Q})$ cubes. This means that 
\begin{equation}
\sum_{(P,\mathcal{Q}_P)\in\mathcal{T}}(\#\mathcal{Q}_P) \gtrsim 2^{-\log_E(\#\mathcal{Q})}(\#\mathcal{Q}),
\end{equation}
where the sum is taken over all leafs of the tree. In particular, there is a level $s_0$ so that 
\begin{equation}\label{mostMassCapturedHere}
\sum_{\substack{(P,\mathcal{Q}_P)\\ \operatorname{level}(P,\mathcal{Q}_P) = s_0}}(\#\mathcal{Q}_P) \gtrsim 2^{-\log_E(\#\mathcal{Q})} (\log E)^{-1}(\#\mathcal{Q}).
\end{equation}
If $E$ is chosen sufficiently large depending on $\eps$, then 
$$
2^{-\log_E(\#\mathcal{Q})} (\log E)^{-1}\gtrsim_{\eps}(\#\mathcal{Q})^{-\eps/2}.
$$
Define $\mathcal{A}^\prime$ to be the set of pairs $(P,\mathcal{Q}_P)$ that have level $s_0$. Then \eqref{mostMassCapturedHere} implies that
\begin{equation}\label{mostMassCaptured}
\sum_{A\in\mathcal{A}^\prime}(\#\mathcal{Q}_A)\gtrsim_{\eps}(\#\mathcal{Q})^{1-\eps/2}.
\end{equation}

By \eqref{massSpreadOverCells}, we have
\begin{equation}\label{notTooMuchMassEachCell}
(\#\mathcal{Q}_A)\leq \Big(C(n) E^{m-n}\Big)^{s_0}(\#\mathcal{Q})\quad\textrm{for each}\ A\in\mathcal{A}^\prime.
\end{equation}
Since
\begin{equation}\label{boundOnCardA}
\#\mathcal{A}\geq \Big(C(n)^{-1}E^{n-m}\Big)^{s_0},
\end{equation}
we have that if $E$ is chosen sufficiently large depending on $\eps$, then \eqref{notTooMuchMassEachCell} and \eqref{boundOnCardA} imply
\begin{equation}\label{eachGrainSmall}
\#\mathcal{Q}_A \lesssim_{\eps}(\#\mathcal{A}^\prime)^{-1+\eps/2}(\#\mathcal{Q})\quad\textrm{for each}\ A\in\mathcal{A}^\prime.
\end{equation}
By pigeonholing, \eqref{mostMassCaptured} and \eqref{eachGrainSmall} imply that there exists a set $\mathcal{A}\subset\mathcal{A}^\prime$ with $\#\mathcal{A}\gtrsim_{\eps} (\#\mathcal{A}^\prime)^{1-\eps}$ that satisfies \eqref{boundOnEachGrainSize}.


It remains to analyze how a tube interacts with the elements of $\mathcal{A}$. Observe that if $(O,\mathcal{Q}_O)$ is a vertex of the tree with children $(O_1,\mathcal{Q}_{O_1}),\ldots, (O_t,\mathcal{Q}_{O_t})$, and if $T$ is a tube of thickness $\delta$ (and any length) contained in $O$, then the set 
$
\bigcup_{i=1}^t \big(T\cap (O_i\backslash N_{\delta}(\operatorname{bdry}(O_i))\big)
$
is contained in a union of $\leq E+1$ sub-tubes of $T$, each of which is contained in a cell $O_i$. If $T\cap Q\neq\emptyset$ for some $Q\in\mathcal{Q}_{O_i}$, then at least one of these sub-tubes intersects $Q$. Thus if $T$ is a tube of thickness $\delta$, then for each non-leaf vertex $(O,\mathcal{Q}_O)$ of $\mathcal{T}$ we can associate a set $T^O\subset T$, which is a disjoint union of sub-tubes of $T$ contained in $O$. 

If $(P,\mathcal{Q}_P)$ is a leaf of $\mathcal{T}$ with parent $(O,\mathcal{Q}_O)$, and if $T$ is a tube of thickness $\delta$ (and any length) contained in $O$, then there is a set $T^P\subset T$ that is a disjoint union of $O_E(1)$ sub-tubes of $T$, each contained in $N_{3\sqrt n\delta}\big(Z(P_1,\ldots,P_m,P)\big)$. The set $T^P$ can be constructed by taking the smallest union of sub-tubes of $T$ that contains $T\cap N_{2\sqrt n\delta}(Z(P_1,\ldots,P_m,P))$. By Theorem \ref{BPRTheorem}, $T\cap N_{2\sqrt n\delta}\big(Z(P_1,\ldots,P_m,P)\big)$ has $O_E(1)$ connected components, and thus $T^P$ will be a union of $O_E(1)$ sub-tubes of $T$. The set $T^P$ has the property that if $Q\in\mathcal{Q}_P$ and $T\cap Q\neq\emptyset$, then $T^P\cap Q\neq\emptyset$.

For each level $s\geq 0$ of the tree, we have that $\bigcup_{P}T^P$ is a disjoint union of disjoint sub-tubes of $T$, and this union contains at most $O_E(1)(E+1)^s$ sub-tubes (the union is taken over all leafs of the tree that have level $s$). In particular, setting $s=s_0$ we see that $\bigcup_{A\in\mathcal{A}}T^A$ is a disjoint union of disjoint sub-tubes of $T$, and this union contains at most $O_E(1) (E+1)^{s_0}$ sub-tubes. If $E$ is chosen sufficiently large (depending on $\eps$), then this quantity is $\lesssim_{\eps}(\#\mathcal{A})^{\frac{1}{n-m}+\eps}$, which establishes Property \ref{T4Item}. Finally, if $A\in\mathcal{A}$, $Q\in\mathcal{Q}_A$, and $T\cap Q\neq\emptyset$, then $T^A\cap A\neq\emptyset$. This establishes Property \ref{T2Item}.
\end{proof}

\begin{lem}\label{singleGrainDecompSecondStep}
Let $2\leq k\leq n$ and let $\lambda_1,\ldots,\lambda_k$ be real numbers between $\delta$ and $1$. For each $j=1,\ldots,k$, let $\tubes_j$ be a set of $\lambda_j\times\delta$ tubes in $\RR^n$. Let $P_1,\ldots,P_m$ be polynomials and let $B_0\subset\RR^n$ be a ball of radius at most $\max_{1\leq j\leq k}\lambda_j+\delta$. Suppose that $Z(P_1,\ldots,P_m)$ is a transverse complete intersection, and that for each index $j$, each tube $T_j\in\tubes_j$ is contained in $B_0\cap N_{3\sqrt n\delta}\big(Z(P_1,\ldots,P_m)\big)$. Let $Y$ be a set of tuples $(Q,T_1,\ldots,T_k)$, where $Q$ is a $\delta$-cube contained in $B_0\cap N_{2\sqrt n\delta}(Z(P_1,\ldots,P_n))$, and $T_j\in\tubes_j$ is a tube intersecting $Q$.

Then for each $\eps>0$, there exists
\begin{itemize}
\item A set $\mathcal{H}$ of triples $H=(B_H,P_H,\mathcal{Q}_H)$, where $B_H\subset B_0$ is a ball, $P_H$ is a polynomial of degree at most $E(n,\eps)$, and $\mathcal{Q}_H$ is a set of $\delta$-cubes.
\item For each index $j$ and each $T_j\in\tubes_j$, a collection $\{T_j^H\}_{H\in\mathcal{H}}$ of subsets of $T_j$.
\item Lengths $\ell_1,\ldots,\ell_k$, each of the form $\delta 2^N$ for some integer $N\geq 0$.
\end{itemize}

These objects have the following properties. 
\begin{enumerate}
\myitem{G1}\label{P1PmCompleteIntersection}  For each $H=(B_H,P_H,\mathcal{Q}_H)\in\mathcal{H}$, we have that $Z(P_1,\ldots,P_m,P_H)$ is a transverse complete intersection and $Q\subset N_{2\sqrt n\delta}(Z(P_1,\ldots,P_m,P_H))$ for each $Q\in\mathcal{Q}_H$.

\myitem{G2}\label{areDisjoint} The sets $\{\mathcal{Q}_H\}_{H\in\mathcal{H}}$ are disjoint.

\myitem{G3}\label{tubesContainedNeighborhoodGrains} For each index $j$, each $T_j\in\tubes_j$, and each $H=(B_H,P_H,\mathcal{Q}_H)\in\mathcal{H}$, we have the containment $T_j^H\subset B_H\cap N_{3\sqrt n\delta}\big(Z(P_1,\ldots,P_m,P_H)\big)$.

\myitem{G4}\label{tubeSegmentLength} For each index $j$, each $T_j\in\tubes_j$, and each $H\in\mathcal{H}$, we have that $T^H$ is a (possibly empty) disjoint union of sub-tubes of $T$, each of which has length $\ell_j$. The sets $\{T^H\}_{H\in\mathcal{H}}$ are disjoint.

\myitem{G5}\label{diamOfBall} For each $(B_H,P_H,\mathcal{Q}_H)\in\mathcal{H}$, $B_H$ has radius at most $4\max_{1\leq j\leq k}\ell_{j}+\delta$.

\myitem{G6}\label{numberOfTubeSegments} For each index $j$ and each $H\in\mathcal{H}$,
\begin{equation}\label{numberTubeSegmentsTj}
(\#\tubes_j) (\#\mathcal{H})^{\frac{1}{n-m}-1} \leq \sum_{T_j\in\tubes_j}\#\operatorname{CC}(T_j^H) \lesssim_{\eps} M^{\eps} (\#\tubes_j) (\#\mathcal{H})^{\frac{1}{n-m}-1},
\end{equation}
where
\begin{equation}\label{defnMRd2}
M = \delta^{-1}\prod_j(\#\tubes_j).
\end{equation}

\myitem{G7}\label{smallYInEachGrain} For each $H\in\mathcal{H}$ we have
\begin{equation}\label{YGBddAboveBelow}
\begin{split}
 &M^{-\eps}(\#\mathcal{H})^{-1}
\sum_{Q} \big(\#\{
(Q, T_1,\ldots,T_k)\in Y\}\big)^{{\frac{1}{k-1}}}\\
&\qquad\lesssim \sum_{Q} \big(  \#\{
(Q, T_1,\ldots,T_k)\in  Y \colon 
Q\in \mathcal{Q}_H,\ Q\cap T_j^H \neq\emptyset\ \textrm{for each index}\ j\}\big)^{\frac{1}{k-1}}\\
&\qquad\qquad\lesssim_{\eps}
M^{\eps} (\#\mathcal{H})^{-1}
\sum_{Q} \big(\#\{
(Q, T_1,\ldots,T_k)\in Y\}\big)^{{\frac{1}{k-1}}}.
\end{split}
\end{equation}

\end{enumerate}
\end{lem}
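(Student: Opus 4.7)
The strategy is to apply Lemma~\ref{singleGrainDecompFirstStep} to the cubes underlying $Y$ and then normalize the outputs by dyadic pigeonholing and spatial localization. First I would apply Lemma~\ref{singleGrainDecompFirstStep} to the set $\mathcal{Q}$ consisting of all $\delta$-cubes $Q$ that appear in some tuple of $Y$. This produces a set $\mathcal{A}=\{(P_A,\mathcal{Q}_A)\}$ together with sub-tube decompositions $T_j^A$ satisfying \ref{C1Item}--\ref{C4Item} and \ref{t1Item}--\ref{T4Item}. Properties \ref{C1Item}--\ref{C3Item} immediately give \ref{P1PmCompleteIntersection}, and the containment \ref{T2Item} ensures that for every tuple $(Q,T_1,\ldots,T_k)\in Y$ with $Q\in\mathcal{Q}_A$ one automatically has $Q\cap T_j^A\neq\emptyset$ for each $j$. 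Consequently, the surviving $Y$-mass associated to $A$ is essentially $\#\{(Q,T_1,\ldots,T_k)\in Y\colon Q\in\mathcal{Q}_A\}$, which already makes \ref{smallYInEachGrain} plausible at the level of $\mathcal{A}$.

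Next I would standardize the tube lengths. For each $A$, each $j$, and each connected sub-tube of $T_j^A$, chop it into consecutive pieces of the largest dyadic length $\delta 2^{N}$ not exceeding its length (discarding a tail of length less than $\delta 2^{N}$, which loses at most a constant factor). A single pass of dyadic pigeonholing in the $k$-tuple $(N_1,\ldots,N_k)$, carried out simultaneously over $A\in\mathcal{A}$, selects lengths $\ell_j=\delta 2^{N_j}$ and a sub-collection $\mathcal{A}'\subset\mathcal{A}$ on which the retained segments $\tilde T_j^A$ are all non-trivial; this costs at most $(\log\delta^{-1})^k\leq M^{O(\eps)}$. I would then prune each $\mathcal{Q}_A$ to those cubes still meeting $\tilde T_j^A$ for every $j$, and invoke the analogue of \ref{T2Item} one more time (the chopping only throws away cubes that were close to an endpoint of a sub-tube, so a positive dyadic fraction of the $Y$-mass survives).

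Finally I would localize spatially. Cover $B_0$ by a bounded-multiplicity collection of balls $\{B_\alpha\}$ of radius $2\max_j\ell_j$, and pass to a sub-collection with disjoint associated cube-sets so that \ref{areDisjoint} and \ref{diamOfBall} hold. Define $H=(B_\alpha,P_A,\mathcal{Q}_A\cap B_\alpha)$ and $T_j^H:=\tilde T_j^A\cap B_\alpha$; properties \ref{tubesContainedNeighborhoodGrains} and \ref{tubeSegmentLength} are immediate from the construction. A final round of dyadic pigeonholing on the three quantities $\#\mathcal{Q}_H$, $\sum_{T_j\in\tubes_j}\#\operatorname{CC}(T_j^H)$ (one for each $j$), and the $Y$-mass appearing in \eqref{YGBddAboveBelow} extracts the family $\mathcal{H}$ with two-sided uniform control on all three, losing only $M^{O(\eps)}$. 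The upper bound in \ref{numberOfTubeSegments} then follows by summing \ref{T4Item} over $T_j\in\tubes_j$ and averaging over $\mathcal{H}$, while the lower bound and both bounds in \ref{smallYInEachGrain} are forced by the uniformity of the pigeonholing.

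The hard part is the second step: after simultaneously fixing the $k$ dyadic length scales $\ell_1,\ldots,\ell_k$, a single tuple of $Y$ survives only if \emph{all} $k$ of its shortened tube segments still meet the associated cube, and because these constraints couple the tubes from the $k$ different families the potential loss is multiplicative in $k$. The remedy is to pigeonhole jointly in $(N_1,\ldots,N_k)$ together with the per-grain $Y$-mass rather than one scale at a time, a strategy reminiscent of the multilinear-to-linear reduction in Lemma~\ref{multiLinToLin}. Matching this joint pigeonholing against \ref{T4Item} so that the exponent $\tfrac{1}{n-m}-1$ in \eqref{numberTubeSegmentsTj} appears cleanly --- without leaking extra logarithms into the exponent itself --- is the bookkeeping point that requires the most care.
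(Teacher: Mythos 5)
Your outline tracks the paper's early steps closely: you correctly identify that one should apply Lemma~\ref{singleGrainDecompFirstStep}, normalize tube lengths by dyadic pigeonholing, localize spatially with a ball cover, and pigeonhole again for uniformity. However, there is a genuine gap in your claim that the \emph{lower} bound in \ref{numberOfTubeSegments} ``is forced by the uniformity of the pigeonholing.'' Pigeonholing only ensures that $\sum_{T_j}\#\operatorname{CC}(T_j^H)$ is comparable across the surviving $H$, not that this common value is at least $(\#\tubes_j)(\#\mathcal{H})^{\frac{1}{n-m}-1}$. The estimate \ref{T4Item} from Lemma~\ref{singleGrainDecompFirstStep} is only an upper bound on the total number of sub-tubes; nothing prevents a scenario in which each tube touches very few grains (say, one grain per tube), in which case the per-grain average is roughly $(\#\tubes_j)/\#\mathcal{H}$, off by a factor of $(\#\mathcal{H})^{\frac{1}{n-m}}$ from what \eqref{numberTubeSegmentsTj} demands. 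This deficiency cannot be repaired by more careful pigeonholing.

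The paper handles this with an explicit iterative ``cut-and-cover'' procedure that your proposal omits: when the lower bound fails, each sub-tube in $T_j^H$ is deliberately sliced into enough equal pieces to artificially boost the connected-component count to the required level; this shortens the sub-tubes and may break \ref{diamOfBall}, so the balls are then subdivided to restore it, which may again break \ref{numberOfTubeSegments}, and so on. Because each round shrinks at least one $\ell_j$ by a factor $\delta^\eps$, the iteration terminates in $O(k/\eps)$ rounds, costing only further powers of $\log M$. This alternation is the actual technical crux of the lemma (it is what lets one go beyond the $k$-broad framework to a genuine $k$-linear estimate), and you have instead identified the joint pigeonholing over $(N_1,\ldots,N_k)$ --- which is routine and costs only $(\log(1/\delta))^k \lesssim M^{O(\eps)}$ --- as the hard part.
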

\begin{proof}
After dyadic pigeonholing, we can find a set $Y^\prime\subset Y$ and a set $\mathcal{Q}$ of $\delta$-cubes so that $\# Y^\prime\gtrsim (\# Y)/\log(\# Y)$; for each $(Q,T_1,\ldots,T_k)\in  Y^\prime$ we have $Q\in\mathcal{Q}$; and 
\begin{equation}\label{uniformityOfYPrime}
\#\{ (T_1,\ldots,T_k)\colon (Q,T_1,\ldots,T_k)\in Y^\prime\} \sim (\# Y^\prime)/(\#\mathcal {Q}) \quad\textrm{for each}\ Q\in\mathcal{Q}.
\end{equation}
In particular, we have
$$
\sum_{Q\in\mathcal{Q}^\prime}\big(\#\{ (T_1,\ldots,T_k)\colon (Q,T_1,\ldots,T_k)\in Y^\prime\} \big)^{\frac{1}{k-1}}\sim (\#Y^\prime)^{\frac{1}{k-1}} (\#\mathcal{Q})^{\frac{k-2}{k-1}}.
$$

Apply Lemma \ref{singleGrainDecompFirstStep} to $\mathcal{Q}$ with allowable error $\eps/C,$ where $C$ is a large constant to be chosen later. Let $\mathcal{A}=\{(P_A,\mathcal{Q}_A)\}$ be the output from the lemma. By \eqref{uniformityOfYPrime} and the lower bound from \eqref{boundOnEachGrainSize}, we have  
\begin{equation}
\begin{split}
\sum_{Q\in\mathcal{Q}}\big( \#\{(Q,T_1,\ldots,T_k)\in Y^\prime \colon Q\cap\bigcup_{A\in\mathcal{A}}T_j^A\neq\emptyset\ & \textrm{for each index j}\}\big)^{\frac{1}{k-1}}\\
&\gtrsim_{\eps} (\#Y^\prime)^{\frac{1}{k-1}-O(\eps/C)} (\#\mathcal{Q})^{\frac{k-2}{k-1}}\\
&\gtrsim_{\eps} M^{-O(\eps/C)}(\#Y^\prime)^{\frac{1}{k-1}} (\#\mathcal{Q})^{\frac{k-2}{k-1}}.
\end{split}
\end{equation}
On the last line we used the fact that $\# Y^\prime\leq \#Y\leq M$. Indeed, this inequality motivates the definition of $M$.

For each index $j$ and each $T_j\in\tubes_j,$ we have that $\bigcup_{A\in\mathcal{A}}T_j^A$ is a disjoint union of $\lesssim_{\eps}(\#\mathcal{A})^{\frac{1}{n-m}+\frac{\eps}{C}}$ sub-tubes of $T_j$, each of which has length between $\delta$ and $\lambda_j$. Thus by dyadic pigeonholing, there are numbers $\ell_{j}^{(0)},\ j=1,\ldots,k,$ each of the form $\delta 2^{N}$ for some non-negative integer $N$,  so that
\begin{equation}\label{mostMassCapturedRound3}
\begin{split}
\sum_{Q\in\mathcal{Q}}  \Big(  \#\Big\{ &
(Q, T_1,\ldots,T_k)\in Y^\prime \colon 
Q\cap T_j\ \textrm{is contained in}\\ 
&\textrm{a sub-tube of}\ \bigcup_{A\in\mathcal{A}}T_j^A\ \textrm{of length between }\ell_j^{(0)}\ \textrm{and}\ 2\ell_j^{(0)}\Big\}\Big)^{\frac{1}{k-1}}\\
&\qquad\qquad\qquad\gtrsim_{\eps}(\log(1/\delta))^{-k} (\#Y^\prime)^{\frac{1}{k-1}-O(\eps/C)} (\#\mathcal{Q})^{\frac{k-2}{k-1}}.
\end{split}
\end{equation}

Recall that the sets $\{T_j^A\}_{A\in\mathcal{A}}$ are disjoint, and each of these sets is a disjoint union of sub-tubes of $T_j$. For each index $j$, each $T_j\in\tubes_j$, and each $A\in\mathcal{A}$, define $\tilde T_j^A$ to be the union of those sub-tubes in $T_j^A$ that have length between $\ell_j^{(0)}$ and $2\ell_j^{(0)}$. Thus \eqref{mostMassCapturedRound3} becomes

\begin{equation*}
\begin{split}
\sum_{Q\in\mathcal{Q}} \Big(  \#\big\{
(Q, T_1,\ldots,T_k)&\in  Y^\prime \colon 
Q\cap \bigcup_{A\in\mathcal{A}}\tilde T_j^A \neq\emptyset \big\}\Big)^{\frac{1}{k-1}}\\
&\gtrsim_{\eps} (\log(1/\delta))^{-k} (\#Y^\prime)^{\frac{1}{k-1}-O(\eps/C)} (\#\mathcal{Q})^{\frac{k-2}{k-1}}\\
&\gtrsim_{\eps} M^{-O(\eps/C)}(\#Y^\prime)^{\frac{1}{k-1}} (\#\mathcal{Q})^{\frac{k-2}{k-1}}.
\end{split}
\end{equation*}

Recall that for each of the tuples $(Q,T_1,\ldots,T_k)$ in the above sum, there is an element $A\in\mathcal{A}$ so that $Q\in\mathcal{Q}_A$ and $Q\cap \tilde T_j^A\neq\emptyset$ for each index $j$. Thus

\begin{equation*}
\begin{split}
\sum_{A\in\mathcal{A}}\sum_{Q\in\mathcal{Q}_A} \Big(  \#\big\{
(Q, T_1,\ldots,T_k)&\in  Y^\prime \colon 
Q\cap \tilde T_j^A \neq\emptyset \big\}\Big)^{\frac{1}{k-1}}\\
&\gtrsim_{\eps} M^{-O(\eps/C)} (\#Y^\prime)^{\frac{1}{k-1}} (\#\mathcal{Q})^{\frac{k-2}{k-1}}.
\end{split}
\end{equation*}
Define $r^{(0)}=2\max_j\ell_j^{(0)}$. Let $\mathcal{B}$ be a set of balls of radius $2r^{(0)}$ in $\RR^n$ so that the balls are $O(1)$ overlapping, and every $r^{(0)}\times\delta$ tube is contained in at least one of the balls. For each such ball $B$, define $\tilde T_j^{A,B}$ to be the union of the sub-tubes in $\tilde T_j^A$ that are contained in $B$. Then there is a subset $\mathcal{B}^\prime\subset\mathcal{B}$ consisting of disjoint balls so that
\begin{equation*}
\begin{split}
\sum_{A\in\mathcal{A}}\sum_{B\in\mathcal{B}^\prime}\sum_{Q\in\mathcal{Q}_A} \Big(  \#\big\{
(Q, T_1,\ldots,T_k)& \in  Y^\prime \colon 
Q\cap \tilde T_j^{A,B} \neq\emptyset\big\}\Big)^{\frac{1}{k-1}}\\
&\gtrsim_{\eps} M^{-O(\eps/C)}(\#Y^\prime)^{\frac{1}{k-1}} (\#\mathcal{Q})^{\frac{k-2}{k-1}}.
\end{split}
\end{equation*}
For each $A\in\mathcal{A}$ and each $B\in\mathcal{B}$, define
$$
\mathcal{Q}_{A,B}=\{Q\in\mathcal{Q}_A\colon Q\subset B\}.
$$
Define 
$$
\mathcal{H}^\prime=\bigcup_{A\in\mathcal{A}} \{ (B, P_A, \mathcal{Q}_{A,B})\colon B\in\mathcal{B}^\prime\}.
$$

If $H = (B_H,P_H,\mathcal{Q}_H)\in\mathcal{H}^\prime$ and $P_H=P_A$ for some $A\in\mathcal{A}$, then for each index $j$ and each $T_j\in\tubes_j$, define $T_j^H=\tilde T_j^{A,B}$; thus $T_j^H$ is a disjoint union of sub-tubes of $T_j$, each of which is contained in $B_H\cap N_{3\sqrt n\delta}(Z(P_1,\ldots,P_m,P))$ and has length between $\ell_{j}^{(0)}$ and $2\ell_{j}^{(0)}$.

We have
\begin{equation}\label{mostIncidencesStillCaptured}
\begin{split}
\sum_{H\in\mathcal{H}^\prime } \sum_{Q\in\mathcal{Q}_H} \Big(  \#\big\{
(Q, T_1,\ldots,T_k)&\in  Y^\prime \colon 
Q\cap T_j^H \neq\emptyset\big\}\Big)^{\frac{1}{k-1}}\\
&\gtrsim_{\eps}M^{-O(\eps/C)}(\#Y^\prime)^{\frac{1}{k-1}} (\#\mathcal{Q})^{\frac{k-2}{k-1}}.
\end{split}
\end{equation}
Abusing notation slightly, we will re-define the sets $T_j^H$ so that each such set is a disjoint union of sub-tubes of $T_j$ of length precisely $\ell_j^{(0)}$. If we choose these sub-tubes appropriately, then \eqref{mostIncidencesStillCaptured} remains true (though the quasi-inequality is weakened by a constant factor). This step is not crucial for our proof; we do it only for notational convenience later on.

For each $H\in\mathcal{H}^\prime$, we will be interested in the quantity
$$
\sum_{Q\in\mathcal{Q}_H} \Big(  \#\big\{
(Q, T_1,\ldots,T_k)\in  Y^\prime \colon  Q\cap T_j^H \neq\emptyset\big\}\Big)^{\frac{1}{k-1}}.
$$
Note that this number is of the form $N^{\frac{1}{k-1}}$, where $N$ is an integer between 0 and $\#Y\leq M$. We will also be interested in the numbers
$$
\sum_{T_j\in\tubes_j}\#\operatorname{CC}(T_j^{H}),
$$
which are non-negative integers bounded by $M$ (this follows from the fact that each connected component of $T_j^H$ has length $\ell_j^{(0)}\geq\delta$). After dyadic pigeonholing, we can find a subset $\mathcal{H}^{(0)}\subset\mathcal{H}^\prime$ so that the above quantities are roughly the same for each $H\in\mathcal{H}^{(0)}$. Specifically, the following items hold
\begin{itemize}
\item \itemizeEqnVSpacing\itemizeEqnVSpacing
\begin{equation}\label{mostMassPreserved}
\begin{split}
\sum_{H\in\mathcal{H}^{(0)}} & \sum_{Q\in\mathcal{Q}_H} \Big(  \#\big\{
(Q, T_1,\ldots,T_k)\in  Y^\prime \colon 
Q\cap T_j^H \neq\emptyset\big\}\Big)^{\frac{1}{k-1}} \\
&\gtrsim (\log M)^{-1} \sum_{H\in\mathcal{H}^\prime } \sum_{Q\in\mathcal{Q}_H} \Big(  \#\big\{
(Q, T_1,\ldots,T_k)\in  Y^\prime \colon 
Q\cap T_j^H \neq\emptyset\big\}\Big)^{\frac{1}{k-1}}.
\end{split}
\end{equation}

\item For each $H_0\in\mathcal{H}^{(0)},$
\begin{equation}\label{evenlySplitBetweenH0}
\begin{split}
\sum_{Q\in\mathcal{Q}_{H_0}} &\Big(  \#\big\{
(Q, T_1,\ldots,T_k)\in  Y^\prime \colon 
Q\cap T_j^{H_0} \neq\emptyset\big\}\Big)^{\frac{1}{k-1}}\\
& \sim (\#\mathcal{H}^{(0)})^{-1}\sum_{H\in\mathcal{H}^{(0)}}  \sum_{Q\in\mathcal{Q}_H} \Big(  \#\big\{
(Q, T_1,\ldots,T_k)\in  Y^\prime \colon 
Q\cap T_j^H \neq\emptyset\big\}\Big)^{\frac{1}{k-1}}.
\end{split}
\end{equation}

\item For each $H_0\in\mathcal{H}^{(0)}$ and each index $j$,
\begin{equation}\label{equalNumbersTubesEachCell}
\sum_{T_j\in\tubes_j}\#\operatorname{CC}(T_j^{H_0}) \sim (\#\mathcal{H}^{(0)})^{-1} \sum_{T_j\in\tubes_j}\sum_{H\in\mathcal{H}^{(0)}}\#\operatorname{CC}(T_j^H).
\end{equation}

\end{itemize}

At this point we will pause to check whether $\mathcal{H}^{(0)}$ and the sets $\{T_j^H\}_{H\in\mathcal{H}_0}$ satisfy the requirements of Lemma \ref{singleGrainDecompSecondStep}. 
\begin{itemize}
\item Property \ref{P1PmCompleteIntersection} is certainly true, since for each $(B_H,P_H,\mathcal{Q}_H)\in \mathcal{H}^{(0)},$ we have that $P_H = P_A$ for some $A\in\mathcal{A}$, and thus $(P_1,\ldots,P_m,P_H) = (P_1,\ldots,P_m,P_A)$ is a transverse complete intersection by Property \ref{C2Item} from Lemma \ref{singleGrainDecompFirstStep}.

\item Property \ref{areDisjoint} follows from the fact that the sets $\{\mathcal{Q}_A\}_{A\in\mathcal{A}}$ from Lemma \ref{singleGrainDecompFirstStep} are disjoint, plus the fact that the balls in $\mathcal{B}^\prime$ are disjoint. 

\item Property \ref{tubesContainedNeighborhoodGrains} follows from Property \ref{t1Item} of Lemma \ref{singleGrainDecompFirstStep}.

\item Property \ref{tubeSegmentLength} follows from the definition of $T_j^H$.

\item Property \ref{diamOfBall} follows from the definition of $\mathcal{B}.$
\end{itemize}

Our next task is to consider Properties \ref{numberOfTubeSegments} and \ref{smallYInEachGrain}. By the upper bound from \eqref{boundOnEachGrainSize} we have that for each $H\in\mathcal{H}^{(0)}$,
\begin{equation}\label{boundMassHInTermsA}
\begin{split}
\sum_{Q\in\mathcal{Q}_H} \Big(  \#\big\{
(Q, T_1,\ldots,T_k)&\in  Y^\prime \colon Q\in\mathcal{Q}_H,\ 
Q\cap T_j^H \neq\emptyset\big\}\Big)^{\frac{1}{k-1}}\\
& \lesssim_{\eps}  (\#\mathcal{A})^{-1+O(\eps/C)} (\#Y^\prime)^{\frac{1}{k-1}-O(\eps/C)} (\#\mathcal{Q})^{\frac{k-2}{k-1}},
\end{split}
\end{equation}
and thus \eqref{mostIncidencesStillCaptured} and \eqref{mostMassPreserved} imply that
\begin{equation}\label{lowerBoundCardH}
\#\mathcal{H}^{(0)}\gtrsim_\eps (\log M)^{-1} M^{-O(\eps/C)} (\#\mathcal{A}).
\end{equation}
While we could combine the terms $(\log M)^{-1}$ and $M^{-O(\eps/C)}$ into one, we will keep them distinct for now.
Combining \eqref{evenlySplitBetweenH0}, \eqref{boundMassHInTermsA}, and \eqref{lowerBoundCardH}, we conclude that for each $H\in\mathcal{H}^{(0)},$
\begin{equation}\label{massSpreadOverH0}
\begin{split}
\sum_{Q\in\mathcal{Q}_H} \Big(  \#\big\{
(Q, T_1,\ldots,T_k)&\in  Y^\prime \colon 
Q\cap T_j^H \neq\emptyset\big\}\Big)^{\frac{1}{k-1}}\\
&\lesssim_{\eps} (\log M)M^{O(\eps/C)}(\#\mathcal{H}^{(0)}) (\#Y^\prime)^{\frac{1}{k-1}} (\#\mathcal{Q})^{\frac{k-2}{k-1}}.
\end{split}
\end{equation}
The inequalities \eqref{mostMassPreserved}, \eqref{evenlySplitBetweenH0} and \eqref{massSpreadOverH0} imply that $\mathcal{H}^{(0)}$ satisfies Property \ref{smallYInEachGrain}; indeed, if $C$ is chosen sufficiently large then $\mathcal{H}^{(0)}$ satisfies \eqref{YGBddAboveBelow} with $M^{\eps/2}$ and $M^{-\eps/2}$ in place of $M^{\eps}$ and $M^{-\eps}$, respectively. This epsilon of slack will be useful for us in the arguments below.

By \eqref{sumOfSubTubes} and \eqref{lowerBoundCardH}, we have that for each index $j$ and each $T_j\in\tubes_j$,
\begin{equation*}
\begin{split}
\sum_{H\in\mathcal{H}^{(0)}}\#\operatorname{CC}(T_j^H)&\lesssim_{\eps} (\log M)M^{O(\eps/C)} (\# \mathcal{A})^{\frac{1}{n-m}}\\
&\lesssim_{\eps} (\log M)M^{O(\eps/C)}(\#\mathcal{H}^{(0)})^{\frac{1}{n-m}},
\end{split}
\end{equation*}
and thus for each index $j$ we have
\begin{equation}\label{numberCCTjH0}
\sum_{T_j\in\tubes_j}\sum_{H\in\mathcal{H}^{(0)}}\#\operatorname{CC}(T_j^H) \lesssim_{\eps} (\log M)M^{O(\eps/C)}(\#\tubes_j) (\# \mathcal{H}^{(0)})^{\frac{1}{n-m}}.
\end{equation}

Finally, by \ref{equalNumbersTubesEachCell}, this implies that for each index $j$ and each $H\in\mathcal{H}^{0}$, we have
\begin{equation}\label{boundNumberTubesEachCell}
\sum_{T_j\in\tubes_j}\#\operatorname{CC}(T_j^H) \lesssim_{\eps} (\log M)M^{O(\eps/C)}(\#\tubes_j) (\# \mathcal{H}^{(0)})^{\frac{1}{n-m}-1}.
\end{equation} 
Thus the set $\mathcal{H}^{(0)}$ and the sets $\{T_j^H\}_{H\in\mathcal{H}^{(0)}}$ satisfy all of the requirements of Lemma \ref{singleGrainDecompSecondStep}, except (crucially!) the first inequality in \eqref{numberTubeSegmentsTj} from Property \ref{numberOfTubeSegments}. To fix this, we will alternate between the following two steps. In the first step, we will cut the sub-tubes in $T_j^H$ into smaller sub-tubes so that Property \ref{numberOfTubeSegments} holds. Of course, when we cut the tubes in $T_j^H$ into smaller sub-tubes, they become shorter, and this this might cause Property \ref{diamOfBall} to fail. In the second step, we will cut the grains into smaller balls. This will cause Property \ref{diamOfBall} to hold, but now Property \ref{numberOfTubeSegments} might fail. We will iterate between these two steps multiple times, and eventually both Property \ref{diamOfBall} and \ref{numberOfTubeSegments} will hold simultaneously. 

Suppose that the lower bound from Property \ref{numberOfTubeSegments} fails for at least one index $j$. Then we perform the following step.

\medskip
\noindent{\bf Step 1.} For each index $j$ for which the lower bound of \eqref{numberTubeSegmentsTj} fails, cut each tube-segment in $\bigcup_{H\in\mathcal{H}^{(0)}}T_j^H$ into 
$$
X \sim  \delta^{-\eps}\frac{ (\#\tubes_j)(\#H^{(0)})^{\frac{1}{n-m}}}{\sum_{T\in\tubes_j}\sum_{H\in\mathcal{H}^{(0)}}\#\operatorname{CC}(T_j^H)}
$$
pieces of equal length. For each index $j$, let $\ell_j^{(1)}$ be the new lengths of the segments. If we choose the value of $X$ appropriately, then we can suppose that $\ell_j^{(1)}$ is of the form $\delta 2^N$ for some non-negative integer $N$.

After Step 1 has been performed, Property \ref{numberOfTubeSegments} now holds. Observe that each of Properties \ref{P1PmCompleteIntersection}--\ref{smallYInEachGrain} remain true, with the exception that Property \ref{diamOfBall} might fail. If Property \ref{diamOfBall} fails, then we form the following step. 
\medskip

\noindent{\bf Step 2.} Let $r^{(1)}=\max_j\ell_j^{(1)}$. For each $(B_H,P_H,\mathcal{Q}_H)\in\mathcal{H}^{(0)}$, cover $B_H$ by finitely overlapping balls of radius $2r^{(1)}$ so that each $r^{(1)}\times\delta$ tube is contained in at least one of these balls. For each such ball $B_H^\prime$, define $\mathcal{Q}_{H^\prime}$ to be the set of cubes from $\mathcal{Q}_H$ contained in $B_H^\prime$, and define $T_j^{H^\prime}$ to be the set of sub-tubes of $T_j^H$ contained in $B_H^\prime$. 

By dyadic pigeonholing, we can select a set $\mathcal{H}^{(1)}$, and for each index $j$; each $T_j\in\tubes_j$; and each $H=(B_H,P_H,\mathcal{Q}_H)\in\mathcal{H}^{(1)}$, a set $T_j^H$ so that $T_j^H$ is a disjoint union of sub-tubes of $T_j$ of length $\ell_j^{(1)}$ that are contained in $B_H$. We can make this selection so that \eqref{mostMassPreserved}, \eqref{evenlySplitBetweenH0}, and \eqref{equalNumbersTubesEachCell} hold with $\mathcal{H}^{(1)}$ in place of $\mathcal{H}^{(0)}$, except the term $(\log M)^{-1}$ in \eqref{mostMassPreserved} is replaced by $(\log M)^{-2}$. In particular, note that the implicit constant in the $O(\eps/C)$ terms in \eqref{massSpreadOverH0} and \eqref{numberCCTjH0} remain unchanged. 

If we repeat the arguments following \eqref{equalNumbersTubesEachCell}, we are now in the situation preceding Step 1. We iterate between these two steps until both Property \ref{diamOfBall} and \ref{numberOfTubeSegments} hold. Each iteration decreases at least one of the lengths $\ell_j$ by a multiplicative factor of $\delta^{\eps}$. Since Property \ref{diamOfBall} automatically holds if all sub-tubes have length $\leq\delta/2$, we conclude that we iterate the above procedure at most $k/\eps+1$ times.

Define $\mathcal{H}$ to be the set obtained by the final iteration. Each iteration weakens \eqref{massSpreadOverH0} and \eqref{numberCCTjH0} by a multiplicative factor of $\lesssim (\log M)^{-1}$, so all together these inequalities are weakened by a multiplicative factor of $\lesssim_{\eps}(\log M)^{-(k/\eps+1)}$. Since $(\log M)^{-(k/\eps+1)}\lesssim_{\eps} M^{\eps/2}$, this is an acceptable loss. Finally, choose the constant $C$ sufficiently large so that all terms of the form $O(\eps/C)$ are at most $\eps/2$.
\end{proof}

We are now ready to prove Proposition \ref{multiLevelGrains}. The result will be proved by repeated application of Lemma \ref{singleGrainDecompSecondStep}.

\begin{proof}[Proof of Proposition \ref{multiLevelGrains}]
We will construct the set $\mathcal{G}$ and the sets $\{T^G\}_{G\in\mathcal{G}}$ using the following iterative procedure. Define $\mathcal{G}^{(0)}=\{(B(0,1),P_0)\}$, where $P_0$ is the zero polynomial. For each index $j$, define $\ell_{0,j}=1$ and for each $T_j\in\tubes_j$, define $T_j^G = T$. Define $D_0 = 1$. Define $Y_{G_0}=Y$.

Observe that $\mathcal{G}^{(0)}$ is a tree of grains of depth 0 and complexity $\lesssim_{\eps}1$, and for each index $j$ and each $T_j\in\tubes_j$, we have that $\{T_j^G\}_{G\in\mathcal{G}^{(0)}}$ is compatible with $\mathcal{G}^{(0)}$. The set $\mathcal{G}^{(0)}$ and the set systems $\{T_j^G\}_{G\in\mathcal{G}^{(0)}}$ satisfy all the properties from Proposition \ref{multiLevelGrains} that apply to grains in $\mathcal{G}$ that have level $i=0$. 

In fact, the following slightly stronger variant of \ref{tubesTouchFewGrainsEquation} is true for all grains $G\in\mathcal{G}^{(0)}$ that have level $i=0$: 
\begin{equation}\label{strongerTubesTouchFewGrainsEquation}
M^{-i \eps/m} (D_1^{1-n}\cdots D_i^{i-n})(\#\tubes_j)\lesssim_{\eps} \sum_{T_j\in\tubes_j} \#\operatorname{CC}(T_j^G)\lesssim_{\eps} M^{i \eps/m}(D_1^{1-n}\cdots D_i^{i-n})(\#\tubes_j).
\end{equation}
This inequality is slightly silly, since both the terms $M^{-i \eps/m}$ and $(D_1^{1-n}\cdots D_i^{i-n})$ are equal to 1. However, stating the inequality in this way will be useful for us later on.

Similarly, the following variant of \eqref{YGBddAbove} is true for all grains $G\in\mathcal{G}^{(0)}$ that have level $i=0$:
\begin{equation}\label{strongerYGBddAbove}
\begin{split}
&M^{-i \eps/m} (D_1^{-n}\cdots D_i^{i-n-1})
\sum_{Q} \big(\#\{
(Q, T_1,\ldots,T_k)\in Y\}\big)^{{\frac{1}{k-1}}}\\
&\qquad \sum_{Q} \big(  \#\{
(Q, T_1,\ldots,T_k)\in  Y_{G} \colon 
Q\cap T_j^G \neq\emptyset\ \textrm{for each index}\ j\}\big)^{\frac{1}{k-1}}
\\
&\qquad\qquad\lesssim_{\eps} \ M^{i\eps/m} (D_1^{-n}\cdots D_i^{i-n-1})
\sum_{Q} \big(\#\{
(Q, T_1,\ldots,T_k)\in Y\}\big)^{{\frac{1}{k-1}}}.
\end{split}
\end{equation}
Again, this inequality is slightly silly since both the terms $M^{i \eps/m}$ and $(D_1^{-n}\cdots D_i^{k-n-1})$ are equal to 1. 

Suppose we have constructed a tree of grains $\mathcal{G}^{(i)}$ of depth $i$; numbers $\{ \ell_{i^\prime,j}\}$ for $0\leq i^\prime\leq i$ and $1\leq j\leq k$; set systems $\{T_j^G\}_{G\in\mathcal{G}^{(i)}}$; and sets $\{Y_G\}_{G\in\mathcal{G}^{(i)}}$ that satisfy the requirements of Proposition \ref{multiLevelGrains}, as well as the stronger inequalities \eqref{strongerTubesTouchFewGrainsEquation} and \eqref{strongerYGBddAbove}. We will show how to append additional leaves to this tree to construct level $i+1$.

Let $G=(B,P_1,\ldots,P_i)\in\mathcal{G}^{(i)}$ be a grain that has level $i$. For each index $j$, define 
$$
\tubes_{G,j}=\bigcup_{T_j\in\tubes_j}\operatorname{CC}(T^G).
$$ 
Thus $\tubes_{G,j}$ is a set of $\ell_{i,j}\times\delta$ tubes, each of which is contained in the ball $B$, which has radius at most $4\max_j \ell_{i,j}+\delta$. By \eqref{strongerTubesTouchFewGrainsEquation}, we have
\begin{equation}\label{cardOfTubesGj}
M^{-i \eps/m} (D_1^{1-n}\cdots D_i^{1-i})(\#\tubes_j)\lesssim_{\eps} \#\tubes_{G,j}\lesssim_{\eps} M^{i \eps/m}(D_1^{1-n}\cdots D_i^{1-i})(\#\tubes_j).
\end{equation}

Apply Lemma \ref{singleGrainDecompSecondStep} to $B$, $P_1,\ldots,P_i$, $\tubes_{G,1},\ldots,\tubes_{G,j}$, and $Y_G$, with allowable error $\eps/C$. $C$ is a constant that will be chosen below. We obtain a set $\mathcal{H}_G$; lengths $\ell_{G,1},\ldots,\ell_{G,k}$; and for each index $j$ and each $T_j\in\tubes_{G,j}$, families of sets $\{T_j^H\}_{H\in\mathcal{H}_G}$.

Observe that $\#\mathcal{H}_G$ is an integer between $0$ and $M$, and each of $\ell_{G,1},\ldots,\ell_{G,k}$ are numbers of the form $\delta 2^N,$ where $N$ is an integer between $0$ and $\log(1/\delta)$. Thus by dyadic pigeonholing, there is a number $L$ and lengths $\ell_{i+1,1},\ell_{i+1,2},\ldots,\ell_{i+1,k}$ so that if we define $\mathcal{G}^{(i)}_*$ to be the set of grains $G\in\mathcal{G}^{(i)}$ that have level $i$ and for which $L\leq \#\mathcal{H}_G\leq 2L$ and $\ell_{G,j}=\ell_{i+1,j}$ for each index $j$, then
\begin{equation}
\#\mathcal{G}^{(i)}_*\gtrsim (\log M)^{-k-1} D_1^n\cdots D_i^{n-i+1}.
\end{equation}
Define 
\begin{equation}
\mathcal{G}^{(i+1)} = \mathcal{G}^{(i)}\cup\ \bigcup_{G=(B,P_1,\ldots,P_i)\in\mathcal{G}^{(i)}_*}\{ (B_H,P_1,\ldots,P_i,P_H)\colon H\in\mathcal{H}_G\}.
\end{equation}
For each of these newly defined grains $G=(B_H,P_1,\ldots,P_i,P_H)$, for each index $j$, and for each $T_j\in\tubes_j$, define 
$$
T_j^G=  \bigcup_{\tilde T_j\in \operatorname{CC}(T_j^{G^\prime})}\tilde T_j^{H},
$$
where $G^\prime=G|_{i}\in\mathcal{G}^{(i)}$. By construction, $T_j^G$ is a disjoint union of $\ell_{i+1,j}\times\delta$ tubes, each of which is contained in $N_{3\sqrt n\delta}(Z(P_1,\ldots,P_i,P_H))$. 

Define $D_{i+1}$ so that
$$
D_1^n\cdots D_i^{n-i+1}D_{i+1}^{n-i}=\#\{ G\in\mathcal{G}\colon \operatorname{level}(G) = i+1 \}.
$$
We have $D_1^n\cdots D_{i+1}^{n-i}\sim L \# \mathcal{G}^{(i)}_*$, and since 
$$
(\log M)^{-k-1}D_1^n\cdots D_i^{n-i+1}\lesssim  \# \mathcal{G}^{(i)}_*\leq D_1^n\cdots D_i^{n-i+1},
$$
we have
\begin{equation}\label{boundOnSizeDip1}
(\log M)^{-k-1}L\lesssim D_{i+1}^{n-i}\lesssim L.
\end{equation}

We can now verify that if the constant $C$ is chosen sufficiently large, then the tree $\mathcal{G}^{(i+1)}$ has the following properties.
\begin{itemize}
\item First, $\mathcal{G}^{(i+1)}$ is a tree of grains of depth $i+1$ and complexity $\lesssim_{\eps}1$, since for each $G=(B,P_1,\ldots,P_{i+1})\in\mathcal{G}^{i+1}$, we have that $Z(P_1,\ldots,P_{i+1})$ is a transverse complete intersection.

\item Second, for each index $j$ and each $T_j\in\tubes_j$, the set system $\{T_j^G\}_{G\in\mathcal{G}^{(i+1)}}$ is compatible with $\mathcal{G}$.

\item Properties \ref{tubesUniformLength} and \ref{grainsLocalized} follow from Properties \ref{tubeSegmentLength} and \ref{tubesContainedNeighborhoodGrains} from Lemma \ref{singleGrainDecompSecondStep}, respectively. 

\item Property \ref{tubesTouchFewGrains} at level $i+1$ follows from the stronger variant \eqref{strongerTubesTouchFewGrainsEquation} at level $i+1$, which in turn follows from \eqref{strongerTubesTouchFewGrainsEquation} at level $i$, \eqref{cardOfTubesGj}, \eqref{boundOnSizeDip1}, and Property \ref{numberOfTubeSegments} from Lemma \ref{singleGrainDecompSecondStep}. 

\item Property \ref{mostOfYCaptured} at level $i+1$ follows from the stronger variant \eqref{strongerYGBddAbove} at level $i+1$, which in turn follows from \eqref{strongerYGBddAbove} at level $i$, \eqref{boundOnSizeDip1}, and Property \ref{smallYInEachGrain} from Lemma \ref{singleGrainDecompSecondStep}.
\end{itemize}
To complete the proof, define $\mathcal{G} = \mathcal{G}^{(m)}.$
\end{proof}

\section{$k$-linear Kakeya estimates for direction-separated tubes}\label{multilinearKakeyaSec}
In this section we will prove Theorem \ref{kLinearKakeyaThm}. We will actually prove the following slightly more technical version. 
\begin{kLinearKakeyaThmRestate}
Let $2\leq k\leq n$ and let $\tubes_1,\ldots,\tubes_k$ be sets of direction-separated $1\times\delta$ tubes in $\RR^n$, each of which have cardinality at most $N$. Then for each $\eps>0$, there is a constant $C(n,\eps)$ so that
\begin{equation}\label{kLinRdWithN}
\Big\Vert\Big(\sum_{T_1\in\tubes_1}\cdots\sum_{T_k\in\tubes_k} \chi_{T_1}\cdots\chi_{T_k}|v_1\wedge\ldots\wedge v_k|^{k/d}\Big)^{\frac{1}{k}} \Big\Vert_{\frac{d}{d-1}} \leq C(n,\eps) \Big(\frac{1}{\delta}\Big)^{\frac{n}{d}-1+\eps} (\delta^{n-1}N)^{\frac{n(d-1)}{d(n-1)}},
\end{equation}
where
\begin{equation}\label{valueOfDmultiLinKakeyaWithN}
d = \frac{n^2+k^2+n-k}{2n}.
\end{equation}
\end{kLinearKakeyaThmRestate}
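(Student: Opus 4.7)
My plan is to combine the multilevel grains decomposition (Proposition~\ref{multiLevelGrains}) with the polynomial Wolff axioms for nested sequences of varieties (Theorem~\ref{polyWolffFlagsVarities}), in a manner that extends the strategy of \cite{GZ} from $k=n-1$ to all $2\le k\le n$. First, I would discretize the left-hand side of \eqref{kLinRdWithN} into a sum over $\delta$-cubes $Q$. By several rounds of dyadic pigeonholing, it suffices to work in the regime where: (i) the wedge factor $|v_1\wedge\ldots\wedge v_k|$ takes a fixed dyadic value $\mu$ on the surviving tuples; (ii) each surviving tube from each $\tubes_j$ meets roughly the same number of tuples; and (iii) each surviving cube $Q$ lies in roughly the same number of tuples. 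After these reductions, the inequality \eqref{kLinRdWithN} translates into an upper bound on $\mu^{k/d}\sum_Q \big(\#\{(Q,T_1,\ldots,T_k)\in Y\}\big)^{1/(k-1)}$ in terms of $N$ and $\delta$.

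Next, I would feed this data into Proposition~\ref{multiLevelGrains} at depth $m=n-k+1$ (the critical depth beyond which the direction separation assumption forces the leaf varieties to contain no tubes at all); this produces a tree of grains $\mathcal{G}$ together with tube-pieces $T_j^G$ and branching factors $D_1,\ldots,D_m$. At a leaf $G=(B,P_1,\ldots,P_m)$, Property~\ref{grainsLocalized} places $B$ inside a ball of radius $\lesssim \max_j\ell_{m,j}$, while the set system $\{T_j^G\}$ sits inside the $O(\delta)$-neighborhood of the nested sequence $Z(P_1)\supset Z(P_1,P_2)\supset\cdots\supset Z(P_1,\ldots,P_m)$ of codimensions $1,2,\ldots,m$. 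For every tube $T_j\in\tubes_j$ contributing to the leaf, each connected component of $T_j\cap G|_i$ is an $\ell_{i,j}\times\delta$ tube concentrated in the appropriate variety $Z(P_1,\ldots,P_i)$ inside a ball of radius $\sim\max_j \ell_{i,j}$; this is exactly the hypothesis of Theorem~\ref{polyWolffFlagsVarities} after rescaling by $(\max_j\ell_{i,j})^{-1}$, with $r_i\sim \ell_{i,j}/\max_{j'}\ell_{i,j'}$ and effective thickness $\delta/\max_j\ell_{i,j}$.

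Applying Theorem~\ref{polyWolffFlagsVarities} on each leaf bounds the number of direction-separated tubes from each $\tubes_j$ that can pass through the leaf. Combined with Property~\ref{tubesTouchFewGrains}, which controls $\sum_{T_j\in\tubes_j}\#\operatorname{CC}(T_j^G)$ in terms of the $D_i$, this produces an inequality on the number of leaves times the leaf contribution. Property~\ref{mostOfYCaptured} lets me recombine these leaf-level bounds into a bound on the full discretized sum $\sum_Q(\cdot)^{1/(k-1)}$. The output is an inequality of the form
\begin{equation*}
\mu^{k/d}\sum_Q\big(\#\{(Q,T_1,\ldots,T_k)\in Y\}\big)^{1/(k-1)}\lessapprox \delta^{-(n/d-1)}(\delta^{n-1}N)^{n(d-1)/(d(n-1))}\cdot F(D_1,\ldots,D_m,\ell_{i,j},\mu),
\end{equation*}
where $F$ is a rational function of the parameters that is forced to be $\lesssim 1$ whenever those parameters are consistent with the grains-decomposition output and with direction separation.

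The heart of the argument, and the main obstacle, will be the parameter optimization: I must verify that the worst case of $F$ over all feasible $(D_i,\ell_{i,j},\mu)$ is exactly balanced so as to yield the value $d=(n^2+k^2+n-k)/(2n)$. The exponent $k/d$ on the wedge factor is dictated by this balance, since the wedge $\mu$ measures how nearly coplanar the $k$-tuple of directions are, and when $\mu$ is small Theorem~\ref{polyWolffFlagsVarities} is effectively being used at a reduced codimension; the precise power $k/d$ is what makes the gain from direction separation exactly compensate this loss. Additional technical difficulties include controlling the subpolynomial losses through all $m$ levels of the tree (handled by Property~\ref{tubesTouchFewGrains} at each stage), and verifying that the $\delta$-neighborhood containment produced at successive levels of the grains decomposition is faithfully preserved when restricted to the direction-separated setting. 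Once the parameter optimization is settled, the rest is bookkeeping.
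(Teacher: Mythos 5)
Your outline correctly identifies the general shape of the argument — discretize, pigeonhole, run the multilevel grains decomposition, feed the nested varieties into Theorem~\ref{polyWolffFlagsVarities}, and recombine via Properties~\ref{tubesTouchFewGrains} and \ref{mostOfYCaptured}. However, there are two genuine gaps.

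First, the depth. The paper takes $m = n-k$, not $m = n-k+1$ as you propose, and your rationale (``the critical depth beyond which the direction separation assumption forces the leaf varieties to contain no tubes at all'') is not right: a variety of codimension $n-k+1$ has dimension $k-1\geq 1$, and its $\delta$-neighborhood can certainly contain $1\times\delta$ tubes. The actual reason for stopping at depth $n-k$ is structural: the leaf variety then has dimension $k$, which is exactly the setting where the $k$-linear Bennett--Carbery--Tao/Guth theorem (Theorem~\ref{multilinearKakeyaThm}) is sharp, and that theorem is the device that finishes the argument inside a leaf.

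Second, and more seriously, your plan never invokes Theorem~\ref{multilinearKakeyaThm} at all. The proof cannot close without it. The grains decomposition plus Theorem~\ref{polyWolffFlagsVarities} control the \emph{number} of direction-separated tubes whose pieces cluster through a chain of grains, and Property~\ref{mostOfYCaptured} reduces the globally pigeonholed quantity $\theta^{1/(k-1)}\mu^{k/(k-1)}|K|$ to an $L^{1/(k-1)}$ integral of the wedge-weighted product over a single level-$(n-k)$ grain. But that integral measures the \emph{overlap} of the tube segments inside the grain, not just their number, and no amount of counting via the polynomial Wolff axioms controls it. The paper bounds it by applying the endpoint multilinear Kakeya theorem to the $\ell_{n-k,j}\times\delta$ segments living inside the $\delta$-neighborhood of a $k$-dimensional transverse complete intersection. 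Without this ingredient, the inequality you call ``the output'' has no derivation, so the part you describe as ``bookkeeping'' is in fact a missing core step. (As a smaller point: the averaging over levels $i=1,\ldots,n-k$ that produces the lower bound on $D_1^{n-k}\cdots D_{n-k}$ hinges on a specific telescoping identity for the exponents of the $D_i$; calling this ``parameter optimization'' is fair, but it is worth knowing that the weights are fixed rather than optimized numerically.)
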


\begin{proof}
Our first step will be to discretize the problem. We have 
\begin{equation}\label{sumOverCubes}
\begin{split}
\int_{\RR^d}& \Big(\sum_{T_1\in\tubes_1}\cdots\sum_{T_k\in\tubes_k} \chi_{T_1}\cdots\chi_{T_k}|v_1\wedge\ldots\wedge v_k|^{\frac{k}{d}}\Big)^{\frac{d}{k(d-1)}}\\
&\leq\sum_{\substack{Q\subset \RR^n\\ Q\ \textrm{a}\ \delta\textrm{-cube}}} |Q| \sum_{\substack{\delta\leq\theta\lesssim 1\\ \theta\ \textrm{dyadic}}} \theta^{\frac{1}{d-1}} \Big(\#\big\{(T_1,\ldots,T_k)\in\tubes_1(Q)\times\cdots\times\tubes_k(Q)\colon\\
&\qquad\qquad\qquad\qquad\qquad\qquad\qquad\qquad\qquad \theta<|v(T_1)\wedge\ldots\wedge v(T_k)|\leq 2\theta \big\}\Big)^{\frac{d}{k(d-1)}}.
\end{split}
\end{equation}
By dyadic pigeonholing, there exist numbers $\delta\leq\theta\leq 1$ and $1\leq\mu\leq \delta^{1-n}$; a set $K\subset\RR^n$ that is a union of $\delta$-cubes; and numbers $\mu_1,\ldots,\mu_k$ so that
\begin{equation}\label{defnOfMu}
\#\big\{(T_1,\ldots,T_k)\in \tubes_1(Q)\times\cdots\times\tubes_k(Q) \colon \theta<|v_1\wedge\ldots\wedge v_k|\leq 2\theta\big\}\sim\mu^k
\end{equation}
for every $\delta$-cube $Q\subset K$;
\begin{equation}\label{RHSBound}
\textrm{RHS}\ \eqref{sumOverCubes}\lessapprox \theta^{1/(d-1)}  \mu^{\frac{d}{d-1}}|K|;
\end{equation}
and
\begin{equation}\label{defnMuj}
\#\tubes_j(Q)\sim \mu_j\quad\textrm{for every $\delta$-cube}\ Q\subset K\ \textrm{and every index}\ j.
\end{equation}
Note that \eqref{defnOfMu} and \eqref{defnMuj} imply that
\begin{equation}\label{muBddMuAvg}
\mu^k \leq \mu_1\cdots\mu_k.
\end{equation} 

Define 
$$
Y = \big\{(Q,T_1,\ldots,T_k)\colon Q\subset K,\ \theta< |v_1\wedge\ldots\wedge v_k|\leq 2\theta,\ T_j\in\tubes_j(Q)\ \textrm{for each index}\ j \big\}.
$$
By \eqref{defnOfMu} we have
\begin{equation}\label{KThetaMuComparedToY}
|K|\theta^{\frac{1}{k-1}}\mu^{\frac{k}{k-1}}\sim\sum_{Q\subset K}|Q|\theta^{\frac{1}{k-1}}\big(\#\{(Q,T_1,\ldots,T_k)\in Y\}\big)^{\frac{1}{k-1}}.
\end{equation}

Apply Proposition \ref{multiLevelGrains} to $\tubes_1,\ldots,\tubes_k$ and $Y$, with $m=n-k$ and allowable error $\eps/C$; here $C$ is a large constant to be chosen later. We obtain a tree $\mathcal{G}$ of grains of depth $n-k$ and complexity $E\lesssim_{\eps}1$; lengths $\ell_{i,j}$; and for each index $j$ and each $T_j\in\tubes_j$, a set system $\{T_j^G\}_{G\in\mathcal{G}}$ of sub-tubes of $T_j$ that is compatible with $\mathcal{G}$. 

Observe that since the tubes in $\tubes$ are direction separated, we have $\#\tubes\lesssim \delta^{1-n}$ and thus the quantity $M$ from \eqref{defnM} satisfies $M\lesssim \delta^{-1-k(n-1)}$. By \eqref{KThetaMuComparedToY} and Property \ref{mostOfYCaptured} of Proposition \ref{multiLevelGrains}, we have that for each $G\in\mathcal{G}$ that has level $n-k$, 
\begin{equation}\label{controllingKThetaMu}
\begin{split}
|K|&\theta^{{\frac{1}{k-1}}}\mu^{\frac{k}{k-1}} \\
& \lesssim_{\eps} \delta^{-O(\eps/C)}(D_1^n\cdots D_{n-k}^{k+1}) \sum_{Q}|Q| \theta^{\frac{1}{k-1}} \big(\#\{(Q,T_1,\ldots,T_k)\in Y\colon T_j^G\cap Q\neq\emptyset\ \textrm{for each index}\ j\}\big)^{\frac{1}{k-1}}\\
&\lesssim \delta^{-O(\eps/C)}(D_1^n\cdots D_{n-k}^{k+1}) \int \bigg( \sum_{\substack{T_1\in\tubes_1\\ T_1^G\neq\emptyset}}\ldots \sum_{\substack{T_k\in\tubes_k\\ T_k^G\neq\emptyset}}\chi_{T_1}\cdots\chi_{T_k}|v_1\wedge\ldots\wedge v_k|\bigg)^{\frac{1}{k-1}}.
\end{split}
\end{equation}

Our next task is to obtain a lower bound for the numbers $D_1,\ldots,D_{n-k}$ from Property \ref{tubesTouchFewGrains} of Proposition \ref{multiLevelGrains}. Fix an index $1\leq i\leq n-k$ and an index $j$. Recall that for each $T_j\in\tubes_j$, the sets $\{T^G\colon G\in\mathcal{G},\ \operatorname{level}(G) = i\}$ are disjoint, and each of these sets is a disjoint union of $\ell_{i,j}\times\delta$ sub-tubes of $T_j$. Similarly, the sets $\{T^G\colon G\in\mathcal{G},\ \operatorname{level}(G) = i-1\}$ are disjoint, and each of these sets is a disjoint union of $\ell_{i-1,j}\times\delta$ sub-tubes of $T_j$. Thus
\begin{equation}\label{ineqSuccessiveLevels}
\ell_{i,j}\sum_{T_j\in\tubes_j}\sum_{ \substack{G\in\mathcal{G}\\ \operatorname{level}(G) = i}}  \#\operatorname{CC}(T_j^G) \leq
\ell_{i-1,j}\sum_{T_j\in\tubes_j}\sum_{ \substack{G\in\mathcal{G}\\ \operatorname{level}(G) = i-1}}  \#\operatorname{CC}(T_j^G).
\end{equation}
But by Property \ref{tubesTouchFewGrains} of Proposition \ref{multiLevelGrains} and the definition of $D_1,\ldots,D_i$, we have that 
\begin{equation}\label{numberTubeSegmentsEachLevel}
\sum_{T_j\in\tubes_j}\sum_{\substack{G\in\mathcal{G}\\ \operatorname{level}(G) = i}}  \#\operatorname{CC}(T_j^G)  
 \gtrsim_\eps M^{-O(\eps/C)}D_{i}\sum_{T_j\in\tubes_j}\sum_{ \substack{G\in\mathcal{G}\\ \operatorname{level}(G) = i-1}} \#\operatorname{CC}(T_j^G).
\end{equation}
Combining \eqref{ineqSuccessiveLevels} and \eqref{numberTubeSegmentsEachLevel}, we conclude that
\begin{equation}\label{growthOfLi}
\ell_{i-1,j}\gtrsim_{\eps}M^{-O(\eps/C)} D_i \ell_{i,j}.
\end{equation} 

We will make use of \eqref{growthOfLi} as follows. Let $G=(B,P_1,\ldots,P_i)\in\mathcal{G}$ be a grain. Let $j_i$ be an index so that $\ell_{i,j_i}$ is maximal. By Property \ref{grainsLocalized} of Proposition \ref{multiLevelGrains}, we have that $B$ has radius $\leq 4\ell_{i,j_i}+\delta$. In particular, for each tube $T_{j_i}\in\tubes_{j_i}$, we have that $T_{j_i}^G$ is either empty, or is a union of at most four disjoint $\ell_{i,{j_i}}\times\delta$ tubes. If $T_{j_i}^G$ is non-empty, then for each index $1\leq i^\prime<i$ we have that each of the $\ell_{i,{j_i}}\times\delta$ tubes in $T_{j_i}^G$ is contained in a $\ell_{i^\prime,j_i}\times\delta$ sub-tube from $T_{j_i}^{G|_{i^\prime}}$. By \eqref{growthOfLi} we have 
$$
\ell_{i^\prime,j_i} \gtrsim_{\eps}M^{-O(\eps/C)} D_{i^\prime+1}\cdots D_{i} \ell_{i,j_i},
$$
and thus if we define $B_{i^\prime}$ to be the ball with the same center as $B$ and radius $M^{-O(\eps/C)}D_{i^\prime+1}\cdots D_{i} \ell_{i,j_i}$, then 
\begin{equation}\label{flagOfVarieties}
|T_{j_i}\cap B_{i^\prime}\cap N_{3\sqrt n\delta}\big(Z(P_1,\ldots,P_{i^\prime})\big)|\gtrsim M^{O(\eps/C)}D_{i^\prime+1}\cdots D_{i} \ell_{i,j_i}\delta^{n-1}\quad\textrm{for each index}\ 1\leq i^\prime\leq i.
\end{equation}

Applying Theorem \ref{polyWolffFlagsVarities} with allowable error $\eps/C$, we obtain
\begin{equation}\label{upperBoundTubesSiG}
\begin{split}
\sum_{T_{j_i}\in\tubes_{j_i}}\operatorname{CC}(T_{j_i}^G)&\leq 4\ \#\{T\in\tubes_{j_i}\colon T^G\neq\emptyset\} \\
&\lesssim_{\eps} \frac{\delta^{1+i-n-\eps/C}}{\prod_{i^\prime=1}^{i} (\ell_{i,j_i}D_i\cdots D_{i^\prime+1})}\\
&\sim \frac{ \delta^{1+i-n-\eps/C}}{ \ell_{i,j_i}^{i} D_2D_3^2\cdots D_{i}^{i-1}}.
\end{split}
\end{equation}
Comparing the upper bound \eqref{upperBoundTubesSiG} with the lower bound from Property \ref{tubesTouchFewGrains} of Proposition \ref{multiLevelGrains}, we see that

\begin{equation}\label{compareLowerBoundTubesSiUpperBoundTubesSi}
D_1^{1-n}\cdots D_i^{i-n}(\#\tubes_{j_i}) \lesssim_\eps \frac{ \delta^{1+i-n-O(\eps/C)}}{ \ell_{i,j_i}^{i} D_2D_3^2\cdots D_{i}^{i-1}}.
\end{equation}
This inequality should be thought of as a lower bound for the numbers $D_1,\ldots,D_i$. Note that for each index $1\leq j\leq k$, we have 

\begin{equation}
\begin{split}
|K|\mu_j & \lesssim \sum_{T\in\tubes_j}|T\cap K|\\
&\leq \sum_{T\in\tubes_j}\sum_{\substack{G\in\mathcal{G}\\ \operatorname{level}(G)=i}} |T^G\cap K|\\
& \lesssim \delta^{-O(\eps/C)}(D_1\cdots D_i)\ell_{i,j} (\delta^{n-1}\#\tubes_j),
\end{split}
\end{equation}
where for the final inequality we used Properties \ref{tubesUniformLength} and \ref{tubesTouchFewGrains} (the latter summed over all $G\in\mathcal{G}$ with $\operatorname{level}(G)=i$) of Proposition \ref{multiLevelGrains}. 
We conclude that 
\begin{equation}
\ell_{i,j}\gtrsim_{\eps} \frac{ \delta^{O(\eps/C)}|K|\mu_j }{(\delta^{n-1}\#\tubes_j) D_1\cdots D_i}=\frac{\delta^{O(\eps/C)}\lambda_j}{D_1\cdots D_i},
\end{equation}
where we define 
\begin{equation}\label{defnLambdaJ}
\lambda_j = |K|\mu_j (\delta^{n-1}\#\tubes_j)^{-1}.
\end{equation}
In particular, if we define $\lambdabar=(\lambda_1\cdots\lambda_k)^{1/k}$, then since $\ell_{i,j_i}=\max_j\ell_{i,j}$, we have
\begin{equation}\label{boundOnEllSii}
\ell_{i,j_i}\geq \frac{\delta^{O(\eps/C)}\lambdabar}{
D_1\cdots D_i}.
\end{equation}
Combining \eqref{boundOnEllSii} and \eqref{compareLowerBoundTubesSiUpperBoundTubesSi}, we have
$$
D_1^{1-n}\cdots D_i^{i-n}(\#\tubes_{j_i}) \lesssim  \lambdabar^{-i} \delta^{1+i-n-O(\eps/C)} D_1^{i}D_2^{i-1}\cdots D_{i}^1,
$$
or
\begin{equation}\label{lowerBoundProductDTubesJi}
\delta^{-i+O(\eps/C)}\lambdabar^i (\delta^{n-1}\#\tubes_{j_i}) \lesssim D_1^{n+i-1}D_2^{n+i-3}D_3^{n+i-5}\cdots D_i^{n-i+1}.
\end{equation}
Define 
$$
W = \min_{1\leq j\leq k}(\#\tubes_j).
$$ 
Then \eqref{lowerBoundProductDTubesJi} implies

\begin{equation}\label{lowerBoundProdDjLeveli}
\begin{split}
\delta^{-i+O(\eps/C)}\lambdabar^i (\delta^{n-1}W ) & \lesssim D_1^{n+i-1}D_2^{n+i-3}D_3^{n+i-5}\cdots D_i^{n-i+1}\\
&=\prod_{m=1}^i D_m^{n+j+1-2m}.
\end{split}
\end{equation}

We will need to average certain powers of \eqref{lowerBoundProdDjLeveli} as $i$ ranges from $1$ to $n-k$. A computation shows that
\begin{equation}\label{keyComp}
\prod_{i=1}^{n-k}\Big(\prod_{m=1}^i D_m^{n+i+1-2m}\Big)^{\frac{k(k-1)}{(n - i+1)(n - j)(n - i-1)}} = D_1^{n-k}D_2^{n-k-1}\cdots D_{n-k+1}^2D_{n-k}.
\end{equation}
(Recall that $k\geq 2$, so the denominator $(n - j+1)(n - j)(n - j-1)$ is never $0$). Combining \eqref{lowerBoundProdDjLeveli} and \eqref{keyComp}, we obtain
\begin{equation}\label{boundOnProdDj}
\begin{split}
D_1^{n-k}\cdots D_{n-k}&\geq \prod_{i=1}^{n-k}\Big( \delta^{-i+O(\eps/C)}\lambdabar^i (\delta^{n-1}W ) \Big)^{\frac{k(k-1)}{(n - i+1)(n - i)(n - i-1)}}\\
& = (\lambdabar/\delta)^{\frac{n^2+k^2-2kn+n-k}{2n}}(\delta^{n-1}W)^{\frac{1}{2} -\frac{k(k-1)}{2n(n-1)}}.
\end{split}
\end{equation}
Observe that since $2\leq k\leq n$, the term $(\delta^{n-1}W)$ is raised to an exponent $0\leq\alpha<1/2$. 

Next, let $G\in\mathcal{G}$ be a grain of level $n-k$. 
%
By \eqref{controllingKThetaMu} and Theorem \ref{multilinearKakeyaThm} we have 
\begin{equation*}
\begin{split}
\theta^{1/(k-1)}\mu^{k/(k-1)}|K| & \lesssim_{\eps} \delta^{-O(\eps/C)}(D_1^n\cdots D_{n-k}^{k+1}) \int \bigg( \sum_{\substack{T_1\in\tubes_1\\ T_1^G\neq\emptyset}}\ldots \sum_{\substack{T_k\in\tubes_k\\ T_k^G\neq\emptyset}}\chi_{T_1}\cdots\chi_{T_k}|v_1\wedge\ldots\wedge v_k|\bigg)^{1/(k-1)}\\
&\lesssim_{\eps} (D_1^n\cdots D_{n-k}^{k+1})\Big(\frac{1}{\delta}\Big)^{\frac{n-k}{k-1}+O(\eps/C)}\prod_{j=1}^k\Big((\delta^{n-1}\#\tubes_j) D_1^{1-n}D_2^{2-n}\cdots D_{n-k}^{-k}\Big)^{1/(k-1)},
\end{split}
\end{equation*}
where on the second line we used Property \ref{tubesTouchFewGrains} from Proposition \ref{multiLevelGrains}. This implies that
\begin{equation}\label{boundPowersThetaMuK}
\theta\mu^k|K|^{k-1}(D_1^n\cdots D_{n-k}^{k+1})^{-(k-1)}\lesssim_{\eps} \delta^{k-n-O(\eps/C)}\prod_{j=1}^k\Big((\delta^{n-1}\#\tubes_j) D_1^{1-n}D_2^{2-n}\cdots D_{n-k}^{-k}\Big).
\end{equation}
Re-arranging, we obtain
\begin{equation}
\begin{split}
\theta\mu^k|K|^{k-1}   &\lesssim_{\eps}  \delta^{k-n-O(\eps/C)}\Big(\prod_{j=1}^k(\delta^{n-1}\#\tubes_j)\Big) \big(D_1^{n}D_2^{n-1}\cdots D_{n-k}^{k+1}\big)^{k-1} \big(D_1^{1-n}D_2^{2-n}\cdots D_{n-k}^{-k}\big)^{k}\\
&\leq \delta^{k-n-O(\eps/C)}\Big((\delta^{n-1}W)^{1/2}(\delta^{n-1}N)^{k-1/2}\Big)\Big(D_1^{n-k}D_2^{n-k-1}\cdots D_{n-k}\Big)^{-1}.
\end{split}
\end{equation} 
Inserting \eqref{boundOnProdDj} and recalling that $W\leq N$, we conclude
\begin{equation}\label{yetAnotherBoundThetaMuK}
\begin{split}
\theta&\mu^k|K|^{k-1} \\
&\lesssim_{\eps} \delta^{k-n-O(\eps/C)}\Big((\delta^{n-1}W)^{1/2}(\delta^{n-1}N)^{k-1/2}\Big)
\Big((\lambdabar/\delta)^{\frac{n^2+k^2-2kn+n-k}{2n}}(\delta^{n-1}W)^{\frac{1}{2} -\frac{k(k-1)}{2n(n-1)}}\Big)^{-1}\\
%
&\lesssim_{\eps} \delta^{k-n+\frac{n^2+k^2-2kn+n-k}{2n}-O(\eps/C)} \lambdabar^{-\frac{n^2+k^2-2kn+n-k}{2n}}(\delta^{n-1}N)^{k-1/2+\frac{k(k-1)}{2n(n-1)} }.
\end{split}
\end{equation}

From \eqref{muBddMuAvg} and \eqref{defnLambdaJ}, we have
$$
\lambdabar = |K| \Big(\prod_{j=1}^k \mu_j (\delta^{n-1}\#\tubes_j)^{-1} \Big)^{1/k}\geq |K|\mu (\delta^{n-1}N)^{-1},
$$
and thus \eqref{yetAnotherBoundThetaMuK} implies
\begin{equation}
\theta^{2n}\mu^{n^2+k^2+n-k}|K|^{n^2+k^2-n-k}  \lesssim_{\eps} \delta^{-n^2+k^2+n-k-O(\eps/C)} (\delta^{n-1}N)^{n^2+k^2-k+\frac{k(k-1)}{n-1} }.
\end{equation}
Recalling the definition of $d$ from \eqref{valueOfDmultiLinKakeyaWithN}, we have
\begin{equation}\label{boundOnThetaMuK}
\begin{split}
\theta^{\frac{1}{d-1}}  \mu^{\frac{d}{d-1}}|K| & = \theta^{\frac{2n}{n^2+k^2-n-k}}\mu^{\frac{n^2+k^2+n-k}{n^2+k^2-n-k}} |K|  \\
& \lesssim_{\eps} \delta^{\frac{-n^2+k^2+n-k}{n^2+k^2-n-k}-O(\eps/C)} (\delta^{n-1}N)^{\frac{n}{n-1}}\\
&=\Big(\frac{1}{\delta}\Big)^{\frac{n-d}{d-1}+O(\eps/C)} (\delta^{n-1}N)^{\frac{n}{n-1}}.
\end{split}
\end{equation}
Combining \eqref{sumOverCubes}, \eqref{RHSBound}, and \eqref{boundOnThetaMuK}, and selecting $C$ sufficiently large, we obtain \eqref{kLinRdWithN}.
\end{proof}

\bibliographystyle{abbrv}
\bibliography{KakeyaRd}

\end{document}